\numberwithin{equation}{section}
\numberwithin{figure}{section}
\def\R{\mathbb{R}}
\def\C{\mathbb{C}}
\def\Z{\mathbb{Z}}
\def\N{\mathbb{N}}
\def\one{\mathds{1}}
\def\eps{\varepsilon}
\renewcommand\leq{\leqslant}
\renewcommand\geq{\geqslant}
\renewcommand\hat{\widehat}
\newcommand\X{\mathrm{X}}
\newcommand{\ft}[1]{\widehat #1}
\newcommand{\Tile}{\operatorname{Tile}}
\newcommand{\Universe}{{\mathfrak{U}}}
\theoremstyle{plain}
\newtheorem{thm}{Theorem}[section]
\newtheorem{theorem}[thm]{Theorem}
\newtheorem{lemma}[thm]{Lemma}
\newtheorem{corollary}[thm]{Corollary}
\newtheorem{proposition}[thm]{Proposition}
\newtheorem{problem}[thm]{Problem}
\newtheorem{conjecture}[thm]{Conjecture}
\newtheorem*{claim*}{Claim}
\newtheorem*{remarks*}{Remarks}
\newtheorem*{remark*}{Remark}
\newtheorem{remark}[thm]{Remark}
\newtheorem{example}[thm]{Example}
\theoremstyle{definition}
\newtheorem{definition}[thm]{Definition}
\newtheorem*{definition*}{Definition}
\newcommand{\thmref}[1]{Theorem~\ref{#1}}
\newcommand{\secref}[1]{Section~\ref{#1}}
\newcommand{\subsecref}[1]{Subsection~\ref{#1}}
\newcommand{\lemref}[1]{Lemma~\ref{#1}}
\newcommand{\defref}[1]{Definition~\ref{#1}}
\newcommand{\propref}[1]{Proposition~\ref{#1}}
\newcommand{\probref}[1]{Problem~\ref{#1}}
\newcommand{\conjref}[1]{Conjecture~\ref{#1}}
\newcommand{\remref}[1]{Remark~\ref{#1}}
\newcommand{\figref}[1]{Figure~\ref{#1}}
\newenvironment{enumerate-math}
{\begin{enumerate}
		\addtolength{\itemsep}{5pt}
		}
	{\end{enumerate}}
\newenvironment{enumerate-text}
{\begin{enumerate}
		\addtolength{\itemsep}{5pt}
		}
	{\end{enumerate}}
\begin{document}

	\title[Undecidable tilings with only two tiles]
	{Undecidable translational tilings with only two tiles, or one nonabelian tile}
	
	\author{Rachel Greenfeld}
	\address{UCLA Department of Mathematics, Los Angeles, CA 90095-1555.}
	\email{greenfeld@math.ucla.edu}
	\author{Terence Tao}
	\address{UCLA Department of Mathematics, Los Angeles, CA 90095-1555.}
	\email{tao@math.ucla.edu}

	\subjclass[]{}
	\date{}
	
	\keywords{}
	
	\begin{abstract} We construct an example of a group $G = \Z^2 \times G_0$ for a finite abelian group $G_0$, a subset $E$ of $G_0$, and two finite subsets $F_1,F_2$ of $G$, such that it is undecidable in ZFC whether $\Z^2\times E$ can be tiled by translations of $F_1,F_2$.  In particular, this implies that this tiling problem is \emph{aperiodic}, in the sense that (in the standard universe of ZFC) there exist translational tilings of $E$ by the tiles $F_1,F_2$, but no periodic tilings.  Previously, such aperiodic or undecidable translational tilings were only constructed for sets of eleven or more tiles (mostly in $\Z^2$).  A similar construction also applies for $G = \Z^d$ for sufficiently large $d$.  If one allows the group $G_0$ to be non-abelian, a variant of the construction produces an undecidable translational tiling with only one tile $F$.
		
	The argument proceeds by first observing that a single tiling equation is able to encode an arbitrary system of tiling equations, which in turn can encode an arbitrary system of certain functional equations once one has two or more tiles.  In particular, one can use two tiles to encode tiling problems for an arbitrary number of tiles.
	\end{abstract}

	\maketitle
	
	
\section{Introduction}

\subsection{A note on set-theoretic foundations}

In this paper we will be discussing questions of decidability in the Zermelo--Frankel--Choice (ZFC) axiom system of set theory.  As such, we will sometimes have to make distinctions between the \emph{standard universe}\footnote{Here we of course make the metamathematical assumption that the standard universe exists, so that in particular ZFC is consistent.  By the second G\"odel incompleteness theorem, this latter claim, if true, cannot be proven within ZFC itself.} $\Universe$ of ZFC, in which for instance the natural numbers $\N = \N_\Universe$ are the standard natural numbers $\{0,1,2,\dots\}$, the integers $\Z = \Z_\Universe$ are the standard integers $\{0, \pm 1, \pm 2,\dots \}$, and so forth, and also \emph{nonstandard universes} $\Universe^*$ of ZFC, in which the model $\N_{\Universe^*}$ of the natural numbers may possibly admit some nonstandard elements not contained in the standard natural numbers $\N_\Universe$, and similarly for the model $\Z_{\Universe^*}$ of the integers in this universe.  However, every standard natural number $n = n_\Universe \in \N$ will have a well-defined counterpart $n_{\Universe^*} \in \N_{\Universe^*}$ in such universes, which by abuse of notation we shall usually identify with $n$; similarly for standard integers. 

If $S$ is a first-order sentence in ZFC, we say that $S$ is (logically) \emph{undecidable} (or \emph{independent of ZFC}) if it cannot be proven within the axiom system of ZFC.  By the G\"odel completeness theorem, this is equivalent to $S$ being true in some universes of ZFC while being false in others.  For instance, if $S$ is a undecidable sentence that involves the group $\Z^d$ for some standard natural number $d$, it could be that $S$ holds for the standard model $\Z^d = \Z_\Universe^d$ of this group, but fails for some non-standard model $\Z_{\Universe^*}^d$ of the group.

\begin{remark}\label{algo-logic}  In the literature the closely related concept of \emph{algorithmic undecidability} from computability theory is often used.  By a \emph{problem} $S(x), x \in X$ we mean a sentence $S(x)$ involving a parameter $x$ in some range $X$ that can be encoded as a binary string. Such a problem is \emph{algorithmically undecidable} if there is no Turing machine $T$ which, when given $x \in X$ (encoded as a binary string) as input, computes the truth value of $S(x)$ (in the standard universe) in finite time.  One relation between the two concepts is that if the problem $S(x), x \in X$ is algorithmically undecidable then there must be at least one instance $S(x_0)$ of this problem with $x_0 \in X$ that is logically undecidable, since otherwise one could evaluate the truth value of a sentence $S(x)$ for any $x \in X$ by running an algorithm to search for proofs or disproofs of $S(x)$.  Our main results on logical undecidability can also be modified to give (slightly stronger) algorithmic undecidability results; see Remark \ref{algo} below.  However, we have chosen to use the language of logical undecidability here rather than algorithmic undecidability, as the former concept can be meaningfully applied to individual tiling equations, rather than a tiling problem involving one or more parameters $x$.
\end{remark}

In order to describe various mathematical assertions as first-order sentences in ZFC, it will be necessary to have the various parameters of these assertions presented in a suitably ``explicit'' or ``definable'' fashion.  In this paper, this will be a particular issue with regards to finitely generated abelian groups $G = (G,+)$.  Define an \emph{explicit finitely generated abelian group} to be a group of the form
\begin{equation}\label{explicit-group}
\Z^d \times \Z_{N_1} \times \dots \times \Z_{N_m}
\end{equation}
for some (standard) natural numbers $d, m$ and (standard) positive integers $N_1,\dots,N_m$, where we use $\Z_N \coloneqq \Z/N\Z$ to denote the standard cyclic group of order $N$.  For instance, $\Z^2 \times \Z_{21}^{20}$ is an explicit finitely generated abelian group.  We define the notion of a \emph{explicit finite abelian group} similarly by omitting the $\Z^d$ factor.  From the classification of finitely generated abelian groups, we know that (in the standard universe $\Universe$ of ZFC) every finitely generated abelian group is (abstractly) isomorphic to an explicit finitely generated abelian group, but the advantage of working with explicit finitely generated abelian groups is that such groups $G$ are definable in ZFC, and in particular have counterparts $G_{\Universe^*}$ in all universes $\Universe^*$ of ZFC, not just the standard universe $\Universe$.

\subsection{Tilings by a single tile}

If $G$ is an abelian group and $A,F$ are subsets of $G$, we define the set $A \oplus F$ to be the set of all sums $a+f$ with $a \in A, f \in F$ if all these sums are distinct, and leave $A \oplus F$ undefined if the sums are not distinct.  Note that from our conventions we have $A \oplus F = \emptyset$ whenever one of $A,F$ is empty.  Given two sets $F, E$ in $G$, we let $\Tile(F; E)$ denote the \emph{tiling equation}\footnote{In this paper we use tiling to refer exclusively to \emph{translational} tilings, thus we do not permit rotations or reflections of the tile $F$. Also, we adopt the convention that an equation such as \eqref{tile-eq} is automatically false if one or more of the terms in that equation is undefined.}
\begin{equation}\label{tile-eq}
\X \oplus F = E
\end{equation}
where we view the tile $F$ and the set $E$ to be tiled as given data and the indeterminate variable $\X$ denotes an unknown subset of $G$.  We will be interested in the question of whether this tiling equation $\Tile(F;E)$ admits solutions $\X = A$, and more generally what the space 
$$ \Tile(F;E)_{\Universe} \coloneqq \{ A \subset G: A \oplus F = E \}$$ 
of solutions to $\Tile(F;E)$ looks like.  Later on we will generalize this situation by considering systems of tiling equations rather than just a single tiling equation, and also allow for multiple tiles $F_1,\dots,F_J$ rather than a single tile $F$.

We will focus on tiling equations in which $G$ is a finitely generated abelian group, $F$ is a finite subset of $G$, and $E$ is a subset of $G$ which is \emph{periodic}, by which we mean\footnote{We caution that in some literature, the term ``periodic'' instead refers to sets that are unions of cosets of some non-trivial cyclic subgroup of $G$; in our notation, we would refer to such sets as being one-periodic.  For instance, if $G = \Z^2$ and $A$ was an arbitrary subset of $\Z$, then $A \times \Z$ would be one-periodic, but not necessarily periodic in the sense adopted in this paper.  The notion of an aperiodic tiling  is similarly modified in some of the literature, and the notion of aperiodicity used here (see \defref{undecide}(ii)) is sometimes referred to as ``weak aperiodicity''.} that $E$ is a finite union of cosets of some finite index subgroup of $G$. In order to be able to talk about the decidability of such tiling problems we will need to restrict further by requiring that $G$ is an explicit finitely generated abelian group in the sense \eqref{explicit-group} discussed previously.  The finite set $F$ can then  be described explicitly in terms of a finite number of standard integers; for instance, if $F$ is a finite subset of $\Z^2 \times \Z_N$, then one can write it as
$$ F = \{ (a_1,b_1,c_1 \hbox{ mod } N), \dots, (a_k,b_k,c_k \hbox{ mod } N) \}$$
for some standard natural number $k$ and some standard integers $a_1$ , $\dots$, $a_k$, $b_1$, $\dots$, $b_k$, $c_1$, $\dots$, $c_k$.  Thus $F$ is now a definable set in ZFC and has counterparts $F_{\Universe^*}$ in every universe $\Universe^*$ of ZFC.  Similarly, a periodic subset $E$ of an explicit finitely generated abelian group $\Z^d \times \Z_{N_1} \times \dots \times \Z_{N_m}$ can be written as
$$ E = S \oplus ((r\Z^d) \times \Z_{N_1} \times \dots \times \Z_{N_m})$$
for some standard natural number $r$ and some finite subset $S$ of $G$; thus $E$ is also definable and has counterparts $E_{\Universe^*}$ in every universe $\Universe^*$ of ZFC.  One can now consider the solution space
$$ \Tile(F;E)_{\Universe^*} \coloneqq \{ A \subset G_{\Universe^*}: A \oplus F_{\Universe^*} = E_{\Universe^*} \}$$ 
to $\Tile(F;E)$ in any universe $\Universe^*$ of ZFC.

We now consider the following two properties of the tiling equation $\Tile(F;E)$.

\begin{definition}[Undecidability and aperiodicity]\label{undecide}  Let $G$ be an (explicit) finitely generated abelian group, $F$ a finite subset of $G$, and $E$ a periodic subset of $G$.
\begin{itemize}
\item[(i)]  We say that the tiling equation $\Tile(F;E)$ is \emph{undecidable} if the assertion that there exists a solution $A \subset G$ to $\Tile(F;E)$, when phrased as a first-order sentence in ZFC, is not provable within the axiom system of ZFC.  By the G\"odel completeness theorem, this is equivalent to the assertion that $\Tile(F;E)_{\Universe^*}$ is empty for some\footnote{In fact, in the specific context of undecidable tiling equations, one can show that $\Tile(F;E)_{\Universe}$ is non-empty for the \emph{standard} universe $\Universe$; see Appendix \ref{wang-app}.} universes $\Universe^*$ of ZFC, but non-empty for some other universes.  We say that the tiling equation $\Tile(F;E)$ is \emph{decidable} if it is not undecidable.
\item[(ii)]  We say that the tiling equation $\Tile(F;E)$ is \emph{aperiodic} if, when working within the standard universe $\Universe$ of ZFC, the equation $\Tile(F;E)$ admits a solution $A \subset G$, but that none of these solutions are periodic.  That is to say, $\Tile(F;E)_\Universe$ is non-empty but contains no periodic sets.
\end{itemize}
\end{definition}

\begin{example}\label{simple-tile}  Let $G$ be the explicit finitely generated abelian group $G \coloneqq \Z^2$, let $F \coloneqq \{0,1\}^2$, and let $E \coloneqq \Z^2$.  The tiling equation $\Tile(F;E)$ has multiple solutions in the standard universe $\Universe$ of ZFC; for instance, given any (standard) function $a \colon \Z \to \{0,1\}$, the set
$$ A \coloneqq \{ (n, a(n) + m): n,m \in 2\Z \}$$
solves the tiling equation $\Tile(F;E)$ and is thus an element of $\Tile(F;E)_\Universe$.  Most of these solutions will not be periodic, but for instance if one selects the function $a \equiv 0$ (so that $A = (2\Z)^2$) then one obtains a periodic tiling.  This latter tiling is definable and thus has a counterpart in every universe $\Universe^*$ of ZFC, and we conclude that in this case the tiling equation $\Tile(F;E)$ is decidable and not aperiodic.
\end{example}

\begin{remark} The notion of aperiodicity of a tiling equation $\Tile(F;E)$ is only interesting when $E$ is itself periodic, since if $A \oplus F = E$ and $A$ is periodic then $E$ must necessarily be periodic also.
\end{remark}

A well-known argument of Wang (see \cite{Ber, R}) shows that if a tiling equation $\Tile(F;E)$ is not aperiodic, then it is decidable; contrapositively, if a tiling equation is undecidable, then it must also be aperiodic.  From this we see that any undecidable tiling equation must admit (necessarily non-periodic) solutions in the standard universe of ZFC (because the tiling equation is aperiodic), but (by the completeness theorem) will not admit solutions at all in some other (nonstandard) universes of ZFC.  For the convenience of the reader we review the proof of this assertion (generalized to multiple tiles, and to arbitrary periodic subsets $E$ of explicit finitely generated abelian groups $G$) in Appendix \ref{wang-app}.

\subsection{The periodic tiling conjecture}\label{subsub-ptc} The following conjecture was proposed in the case\footnote{Strictly speaking, Lagarias and Wang posed an analogue of this conjecture for $E=G=\R^d$, see \subsecref{problem_Rd}.}  $E=G=\Z^d$ by Lagarias and Wang \cite{LW} and also previously appears implicitly in \cite[p. 23]{grunbaum-shephard}:

\begin{conjecture}[Periodic tiling conjecture]\label{ptc}  Let $G$ be an explicit finitely generated abelian group, let $F$ be a finite non-empty subset of $G$, and let $E$ be a periodic subset of $G$.  Then $\Tile(F;E)$ is not aperiodic.
\end{conjecture}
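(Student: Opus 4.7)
The plan is to attempt the following reduction: \emph{if} a tiling $A \oplus F = E$ exists, then a periodic tiling exists as well. The overall strategy combines a normalization step, a compactness and averaging argument, and a Fourier-analytic structure theorem for the tile.

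First, I would normalize the equation. Writing $E = S \oplus H$ for a finite set $S$ and a finite-index subgroup $H \le G$, one passes to a quotient (keeping track of residues) and reduces the problem to that of a single finite tile $F$ tiling the full group $\Z^d \times G_0$. Combined with compactness of $2^G$ in the product topology and shift-invariance of the solution space, a Krylov--Bogolyubov argument yields a shift-invariant probability measure supported on $\Tile(F;E)_\Universe$. The goal then becomes to show that some ergodic component of such a measure is supported on a single periodic orbit. For finite $G$ this is trivial, and for $G = \Z$ it follows from a classical combinatorial argument of Newman on arithmetic progressions.

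Second, following the Fourier-analytic approach of Bhattacharya (which establishes the conjecture for $G = \Z^2$), I would analyze $\ft{\one}_F$: in any tiling, the Fourier spectrum of $A$ must lie in the zero set $\zft{F}$ together with the origin, which carries the density. The arithmetic structure of this zero set — in particular its decomposition via the cyclotomic factorization of the mask polynomial of $F$ — can be used to constrain $A$ to be a finite union of cosets of a finite-index subgroup. The key ingredient is a structure theorem for subsets of $G$ whose Fourier support is confined to a prescribed algebraic (cyclotomic) variety, upgraded via ergodicity to a pointwise periodic description of $A$.

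The main obstacle is high dimensionality. The combinatorial freedom in how a single tile can fit together with its translates in $\Z^d$ for large $d$ grows very rapidly, and it is far from clear that such complexity always admits a periodic trivialization. The present paper shows that with two (or more) tiles, the analogous complexity can encode undecidable problems, which heuristically suggests caution about whether the single-tile conjecture holds in full generality. Any genuine proof attempt would need either to restrict to small $d$ (where the conjecture is established for $\Z$ and $\Z^2$) or to isolate a structural constraint peculiar to the single-tile case that rules out the encoding phenomena the present paper exploits for two tiles.
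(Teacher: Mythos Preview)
The statement you are trying to prove is Conjecture~\ref{ptc}, the \emph{periodic tiling conjecture}. The paper does not prove it; it is stated as an open problem. The paper records the known cases (trivially true for finite $G$; Newman's pigeonhole argument for $G = \Z \times G_0$; Bhattacharya's ergodic/Fourier argument and the authors' own refinement for $G = \Z^2$), and explicitly notes that the conjecture remains open already for $G = \Z^3$ and for $G = \Z^2 \times \Z_N$ with general $N$.

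Your proposal accurately summarizes the ingredients behind the known low-dimensional results --- Newman's combinatorics in dimension one, the compactness/ergodic-measure setup, and the spectral constraint that $\ft{\one_A}$ is supported on $\{0\} \cup \zeros(\ft{\one_F})$ --- and you correctly identify that the step which fails to generalize is the passage from ``Fourier support in a cyclotomic variety'' to ``finite union of cosets.'' But this is precisely the gap: in dimension~$2$ Bhattacharya's argument (and the weak-periodicity refinement in \cite{GT}) exploit specific one-dimensional fiber structure of the zero set of a bivariate trigonometric polynomial, and no analogue is known for $d \geq 3$. Your second paragraph asserts that the structure of $\zeros(\ft{\one_F})$ ``can be used to constrain $A$ to be a finite union of cosets of a finite-index subgroup,'' but this is exactly the open conjecture restated, not a step one currently knows how to carry out.

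To your credit, your final paragraph essentially concedes this. So what you have written is a fair survey of the landscape rather than a proof, and that is the honest situation: there is no proof in the paper to compare against, because the statement is conjectural.
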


By the previous discussion, Conjecture \ref{ptc} implies that the tiling equation $\Tile(F;E)$ is decidable for every $F,E,G$ obeying the hypotheses of the conjecture.

The following progress is known towards the periodic tiling conjecture:

\begin{itemize}
\item Conjecture \ref{ptc} is trivial when $G$ is a finite abelian group, since in this case all subsets of $G$ are periodic.
\item When $E=G=\Z$, Conjecture \ref{ptc} was established by Newman \cite{N} as a consequence of the pigeonhole principle. In fact, the argument shows that \emph{every} set in $\Tile(F;\Z)_\Universe$ is periodic. As we shall review in Section \ref{one-tile} below, the argument also extends to the case $G = \Z \times G_0$ for an (explicit) finite abelian group $G_0$, and to an arbitrary periodic subset $E$ of $G$. See also the results in Section \ref{single-multi} for some additional properties of one-dimensional tilings.
\item When $E=G=\Z^2$, Conjecture \ref{ptc}  was established by Bhattacharya \cite{BH} using ergodic theory methods (viewing $\Tile(F; \Z^2)_\Universe$ as a dynamical system using the translation action of $\Z^2$).  In our previous paper \cite{GT} we gave an alternative proof of this result, and generalized it to the case where $E$ is a periodic subset of $G = \Z^2$. In fact,  we strengthen the previous result of Bhattacharya, by showing  that every set in $\Tile(F,E)_\Universe$ is \textit{weakly periodic} (a disjoint union of finitely many one-periodic sets).  In the case of polyominoes (where $F$ is viewed as a union of unit squares whose boundary is a simple closed curve), the conjecture was previously established in \cite{bn}, \cite{gbn}\footnote{In fact, they showed that when $F$ is a polyomino, every set in $\Tile(F;\Z^2)_\Universe$ is one-periodic.} and decidability was established even earlier in \cite{wvl}.
\end{itemize}

The conjecture remains open in other cases; for instance, the case $E=G=\Z^3$ or the case $E=G=\Z^2 \times \Z_N$ for an arbitrary natural number $N$, are currently unresolved, although we hope to report on some results in these cases in forthcoming work. In \cite{szegedy} it was shown that Conjecture \ref{ptc} for $E=G=\Z^d$ was true whenever the cardinality $|F|$ of $F$ was prime, or less than or equal to four.

\subsection{Tilings by multiple tiles}

It is natural to ask if Conjecture \ref{ptc} extends to tilings by multiple tiles.  Given subsets $F_1,\dots,F_J,E$ of a group $G$, we use $\Tile(F_1,\dots,F_J;E) = \Tile((F_j)_{j=1}^J; E)$ to denote the tiling equation\footnote{See Section \ref{notation-sec} for our conventions on precedence of operations such as $\oplus$ and $\uplus$.  In the language of convolutions, one can also write this tiling equation as $\one_{\X_1} * \one_{F_1} + \dots + \one_{\X_J} * \one_{F_J} = \one_E$, where we use $\one_A$ to denote the indicator function of $A$.}
\begin{equation}\label{multitile}
\biguplus_{j=1}^J \X_j \oplus F_j = E,
\end{equation}
where $A \uplus B$ denotes the disjoint union of $A$ and $B$ (equal to $A \cup B$ when $A,B$ are disjoint, and undefined otherwise).  As before we view $F_1,\dots,F_J,E$ as given data for this equation, and $\X_1,\dots,\X_J$ are indeterminate variables representing unknown tiling sets in $G$.  If $G$ is an explicit finitely generated group, $F_1,\dots,F_J$ are finite subsets of $G$, and $E$ is a periodic subset of $G$, we can define the solution set
$$
\Tile(F_1,\dots,F_J;E)_\Universe \coloneqq \left\{ (A_1,\dots,A_J) \colon A_1,\dots,A_J \subset G; \biguplus_{j=1}^J A_j \oplus F_j = E \right\}$$
and more generally for any other universe $\Universe^*$ of ZFC we have
$$
\Tile(F_1,\dots,F_J;E)_{\Universe^*} \coloneqq \left\{ (A_1,\dots,A_J) \colon A_1,\dots,A_J \subset G_{\Universe^*}; \biguplus_{j=1}^J A_j \oplus (F_j)_{\Universe^*} = E_{\Universe^*} \right\}.$$

We extend Definition \ref{undecide} to multiple tilings in the natural fashion:

\begin{definition}[Undecidability and aperiodicity for multiple tiles]\label{undecide-mult}  Let $G$ be an explicit finitely generated abelian group, $F_1,\dots,F_J$ be finite subsets of $G$ for some standard natural number $J$, and $E$ a periodic subset of $G$.
\begin{itemize}
\item[(i)]  We say that the tiling equation $\Tile(F_1,\dots,F_J;E)$ is \emph{undecidable} if the assertion that there exist subsets $A_1,\dots,A_J \subset G$ solving $\Tile(F_1,\dots,F_J;E)$, when phrased as a first-order sentence in ZFC, is not provable within the axiom system of ZFC. By the G\"odel completeness theorem, this is equivalent to the assertion that $\Tile(F_1,\dots,F_J;E)_{\Universe^*}$ is non-empty for some universes $\Universe^*$ of ZFC, but empty for some other universes. We say that $\Tile(F_1,\dots,F_J;E)$ is \emph{decidable} if it is not undecidable.
\item[(ii)]  We say that the tiling equation $\Tile(F_1,\dots,F_J;E)$ is \emph{aperiodic} if, when working within the standard universe $\Universe$ of ZFC, the equation $\Tile(F_1,\dots,F_J;E)$ admits a solution $A_1,\dots,A_J \subset G$, but there are no solutions for which all of the $A_1,\dots,A_J$ are periodic. That is to say, $\Tile(F_1,\dots,F_J;E)_\Universe$ is non-empty but contains no tuples of periodic sets.
\end{itemize}
\end{definition}

As in the single tile case, undecidability implies aperiodicity; see Appendix \ref{wang-app}. The argument of Newman that resolves the one-dimensional case of Conjecture \ref{ptc} also shows that for (explicit) one-dimensional groups $G = \Z \times G_0$, every tiling equation $\Tile(F_1,\dots,F_J;E)$ is not aperiodic (and thus also decidable); see Section \ref{one-tile}.

However, in marked contrast to what Conjecture \ref{ptc} predicts to be the case for single tiles, it is known that a tiling equation $\Tile(F_1,\dots,F_J;E)$ \emph{can} be aperiodic or even undecidable when $J$ is large enough.  In the model case $E=G=\Z^2$, an aperiodic tiling equation $\Tile(F_1,\dots,F_J;\Z^2)$ was famously constructed\footnote{Strictly speaking, Berger's construction was for the closely related \emph{domino problem} (or \emph{Wang tiling problem}), but it was shown by Golomb \cite{golomb} shortly afterwards that this construction also implies undecidability for the translational tiling problem.  Similar considerations apply to several of the other constructions listed in Table \ref{J-table}.} by Berger \cite{Ber} with $J = 20426$, and an undecidable tiling was also constructed by a modification of the method with an unspecified value of $J$.  A simplified proof of this latter fact was given by Robinson \cite{R}, who also constructed a collection of $J=36$ tiles was constructed in which a related \emph{completion problem} was shown to be undecidable.  The value of $J$ for either undecidable examples or aperiodic examples has been steadily lowered over time; see Table \ref{J-table} for a partial list.  We refer the reader to the recent survey \cite{jv} for more details of these results.  To our knowledge, the smallest known value of $J$ for an aperiodic tiling equation $\Tile(F_1,\dots,F_J;\Z^2)$ is $J=8$, by Ammann, Gr\"unbaum, and Shephard \cite{ags}. The smallest known value of $J$ for a tiling equation $\Tile(F_1,\dots,F_J;\Z^2)$ that was explicitly constructed and shown to be undecidable is $J=11$, due to Ollinger \cite{ollinger-2}.

\begin{remark}
As Table \ref{J-table} demonstrates, many of these constructions were based on a variant of a tile set in $\Z^2$ known as a set of \emph{Wang tiles}, but in \cite{jr} it was shown that Wang tile constructions cannot create aperiodic (or undecidable) tile sets for any $J < 11$.  
\end{remark}

Analogous constructions in higher dimensions were obtained for $E=G=\Z^3$ (or more precisely $\R^3$) in \cite{danzer}, \cite{schmitt}, \cite{CK} and for $E=G=\Z^n$ (or more precisely $\R^n$), $n \geq 3$ in \cite{gs-higher}. 
\begin{table}[ht]
\centering
\begin{tabular}[t]{lll}
$J$ & Author & Type \\
\hline
$20426$ & Berger \cite{Ber} & aperiodic [undecidable] (W) \\
$104$ & Robinson \cite{ams} & aperiodic (W) \\
$104$ & Ollinger \cite{ollinger} & aperiodic [undecidable] (W) \\
$103$ & Berger \cite{Ber-thesis} & aperiodic (W) \\
$86$ & Knuth \cite{knuth} & aperiodic (W) \\
$56$ & Robinson \cite{R} & aperiodic (W) \\
$52$ & Robinson \cite{poizat} & aperiodic (W) \\
$40$ & Lauchli \cite{wang} & aperiodic (W) \\
$36$ & Robinson \cite{R} & completion-undecidable (W) \\
$32$ & Robinson \cite{grunbaum-shephard} & aperiodic (W) \\
$24$ & Gr\"unbaum--Shephard \cite{grunbaum-shephard} & aperiodic (W) \\
$24$ & Robinson \cite{grunbaum-shephard} & aperiodic (W) \\
$16$ & Ammann et al. \cite{ags} & aperiodic (W) \\
$14$ & Kari \cite{kari} & aperiodic (W) \\
$13$ & Culik \cite{culik} & aperiodic (W) \\
$12$ & Socolar--Taylor \cite{socolar-taylor} & aperiodic \\
$11$ & Jeandel--Rao \cite{jr} & aperiodic (W)\\
$11$ & Ollinger \cite{ollinger-2} & undecidable \\
$8$ & Ammann et al. \cite{ags} & aperiodic \\
$8$ & Goodman-Strauss \cite{goodman} & aperiodic \\
\hline
{\bf $2^*$} & {\bf Theorems \ref{main}. \ref{main'}} & {\bf undecidable}\\
{\bf $1^{**}$} & {\bf Theorem \ref{onetile}} & {\bf undecidable} \\
\hline
\end{tabular}
\caption{Selected constructions of aperiodic or undecidable tiling equations. This list is primarily adapted from \cite{jr}, and incorporates from that reference some corrections to the values of $J$ in several lines of this table.  Constructions labeled (W) arise from a Wang tile construction.  The constructions marked ``aperiodic [undecidable]'' give aperiodic tilings for the specified value of $J$, and an undecidable tiling for an unspecified value of $J$.  The asterisk for our results in Theorems \ref{main}, \ref{main'} denotes the fact that we are replacing $\Z^2$ by $\Z^2 \times E_0$ for some subset $E_0$ of an explicit finite abelian group $G_0$, or by a periodic subset of some high-dimensional lattice $\Z^d$.  The double asterisk indicates that the tiling is nonabelian.  For some other notable constructions of aperiodic or undecidable tiling equations (but with values of $J$ that are either not explicitly stated, or larger than other contemporary constructions), see \cite{jr}, \cite{goodman}, \cite{jv}.}
\label{J-table}
\end{table}%

\subsection{Main results}

Our first main result is that one can in fact obtain undecidable (and hence aperiodic) tiling equations for $J$ as small as $2$, at the cost of enlarging $E$ from $\Z^2$ to $\Z^2 \times E_0$ for some subset $E_0$ of a (explicit) finite abelian group $G_0$.

\begin{theorem}[Undecidable tiling equation with two tiles in $\Z^2\times G_0$]\label{main}  There exists an explicit finite abelian group $G_0$, a subset $E_0$ of $G_0$, and finite non-empty subsets $F_1,F_2$ of $\Z^2 \times G_0$ such that the tiling equation $\Tile(F_1,F_2; \Z^2 \times E_0)$ is undecidable (and hence aperiodic).
\end{theorem}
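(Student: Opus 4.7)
The plan is to chain together two reductions highlighted in the abstract. First, I would start from a known undecidable problem---for concreteness, Ollinger's $J=11$ tiling equation $\Tile(F_1',\dots,F_{11}';\Z^2)$ from \cite{ollinger-2}---and recast it as a system of ``functional'' constraints on an unknown set valued in an auxiliary finite alphabet, which we package as a finite abelian group $G_0$. Second, following the abstract's observation that a system of functional equations can be encoded by a tiling equation once one has at least two tiles, I would design two tiles $F_1, F_2 \subset \Z^2 \times G_0$ and a periodic target $\Z^2 \times E_0$ (with $E_0 \subset G_0$) so that the solutions $(A_1, A_2)$ of $\Tile(F_1, F_2; \Z^2 \times E_0)$ are in bijective correspondence with the solutions of the original functional system, and hence with the 11-tuples solving Ollinger's equation.

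Intuitively, $G_0$ should decompose as a product of a ``label'' factor indexing the eleven original tiles together with auxiliary ``bookkeeping'' factors (e.g.\ an additional $\Z_2^k$ playing the role of a parity check). The tile $F_1$ would package the local shapes of the eleven original tiles, indexed by the label coordinate, while the second tile $F_2$ together with a carefully chosen $E_0$ provides the spare positions needed to force the fiber-wise structure of $A_1$ and to synchronize the label-fibers into a coherent 11-tile tiling. The second tile $F_2$ is essential here: by the one-dimensional argument reviewed in Section \ref{one-tile}, and more importantly by Conjecture \ref{ptc} in the single-tile case, one does not expect to produce an undecidable tiling equation with only one tile in an abelian group, so the freedom gained from a second tile is precisely what allows functional identities to be encoded.

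The main obstacle, I expect, is the rigidity step: showing that \emph{every} solution $(A_1, A_2)$ of $\Tile(F_1, F_2; \Z^2 \times E_0)$, not only the intended ones, must decompose over $G_0$ so that the projection to $\Z^2$ on each label-fiber yields a valid 11-tuple solving Ollinger's equation. With only two tiles, many parasitic configurations are a priori allowed in which translates of $F_1$ or $F_2$ straddle several labels or otherwise violate the intended synchronization; ruling these out will require careful engineering of $E_0$, the bookkeeping factors in $G_0$, and the shapes of $F_1,F_2$, typically via a rigidity lemma asserting that $F_1$ and $F_2$ can only be placed in specified cosets of a designated subgroup of $\Z^2 \times G_0$. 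Once such a rigidity statement is in hand, the equivalence of solution spaces is immediate in both directions, undecidability transfers from \cite{ollinger-2}---a ZFC decision procedure for $\Tile(F_1, F_2; \Z^2 \times E_0)$ would produce one for Ollinger's equation---and aperiodicity then follows from the Wang-type argument reviewed in Appendix \ref{wang-app}.
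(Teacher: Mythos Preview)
Your high-level plan---start from a known undecidable multi-tile equation such as Ollinger's, pass through functional constraints valued in a finite group, then encode those constraints with two tiles---matches the paper's overall arc. However, what you have written is a research outline rather than a proof: you identify the rigidity obstacle correctly but do not resolve it, and the specific mechanism you sketch (one tile $F_1$ packaging all eleven labelled shapes, the other tile $F_2$ handling bookkeeping) is not the route the paper takes and would, as you yourself anticipate, be very difficult to make rigid.

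The paper's solution to the rigidity problem is structurally different and worth knowing. Rather than attacking a single two-tile equation head-on, it first proves (Theorem~\ref{combine}) that any \emph{system} of $M$ tiling equations $\Tile(F_1^{(m)},F_2^{(m)};\Z^2\times E_0^{(m)})$, $m=1,\dots,M$, can be collapsed into one equation by adjoining a $\Z_N$ factor with $N>M$ and stacking the data as $\tilde F_j=\biguplus_m F_j^{(m)}\times\{m\}$, $\tilde E_0=\biguplus_m E_0^{(m)}\times\{m\}$. This buys an unlimited supply of simple constraints, so rigidity no longer has to be achieved by a single cleverly engineered pair of tiles. With this freedom, Proposition~\ref{tile-function} shows that a system of tiling equations is equivalent to a system of functional equations for two unknown functions $f_1,f_2\colon G\to G_0$; the rigidity there comes from imposing the equations $\Tile((\{0\}\times[[\sigma(j)]])_{j=1}^2;G\times[[1,2]])$ for \emph{every} permutation $\sigma$ of $\{1,2\}$, which forces each $A_j$ to be the graph of a function. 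The remaining work (Sections~\ref{hamming-sec}--\ref{tiling}) is a chain of reductions---Hamming cube, linear equations on Boolean functions, antipode-avoiding constraints, local Boolean constraints---that eventually encodes the original $J$-tile problem. In particular, the two tiles in the final construction play symmetric roles (each encodes a function $f_j$), not the asymmetric ``shapes versus bookkeeping'' roles you propose.

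So the gap in your proposal is not the strategy but the missing mechanism: you need either the system-to-single-equation trick of Theorem~\ref{combine} (or an equivalent device) to generate enough constraints, together with a concrete rigidity lemma such as Proposition~\ref{tile-function}, before the equivalence of solution spaces can be claimed.
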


The proof of \thmref{main} goes on throughout Sections 3--8. In \secref{extension}, by ``pulling back'' the proof of \thmref{main}, we prove the following analogue  in $\Z^d$.

\begin{theorem}[Undecidable tiling equation with two tiles in $\Z^d$]\label{main'}  There exists an explicit $d>1$, a periodic subset $E$ of $\Z^d$, and finite non-empty subsets $F_1,F_2$ of $\Z^d$ such that the tiling equation $\Tile(F_1,F_2; E)$ is undecidable (and hence aperiodic).
\end{theorem}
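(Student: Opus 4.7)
The plan is to transport the undecidable tiling equation from \thmref{main} to $\Z^d$ by a covering construction. Using the classification of finite abelian groups, I would write the explicit finite abelian group $G_0$ produced by \thmref{main} as $\Z^m/\Lambda_0$ for some sublattice $\Lambda_0$ of finite index in $\Z^m$ (for instance $\Lambda_0 = N_1\Z \times \cdots \times N_m\Z$), set $d \coloneqq 2+m$, and let $\pi \colon \Z^d = \Z^2 \times \Z^m \to \Z^2 \times G_0$ denote the quotient map, with kernel $\Lambda \coloneqq \{0\}^2 \times \Lambda_0$. Fix a fundamental domain $D \subset \Z^m$ of $\Lambda_0$, lift the tiles $F_1, F_2 \subset \Z^2 \times G_0$ from \thmref{main} to finite sets $\tilde F_1, \tilde F_2 \subset \Z^2 \times D$ by taking unique representatives, and lift the target $E = \Z^2 \times E_0$ to its preimage $\tilde E \coloneqq \pi^{-1}(E) = \Z^2 \times \tilde E_0$. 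Since $\tilde E_0$ is a finite union of $\Lambda_0$-cosets in $\Z^m$, the set $\tilde E$ is a finite union of cosets of the finite-index subgroup $\Z^2 \times \Lambda_0$ of $\Z^d$, and hence periodic. Undecidability of $\Tile(\tilde F_1, \tilde F_2; \tilde E)$ will then follow from \thmref{main} once one shows, provably within ZFC, that the two equations $\Tile(F_1, F_2; \Z^2 \times E_0)$ and $\Tile(\tilde F_1, \tilde F_2; \tilde E)$ have equivalent solvability.

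The forward direction is straightforward: for any solution $(A_1, A_2)$ of the original equation, the sets $\tilde A_i \coloneqq \pi^{-1}(A_i)$ form a $\Lambda$-periodic solution of the lifted equation. Disjointness of each $\tilde A_i \oplus \tilde F_i$ follows because distinct elements of $\tilde F_i$ have distinct $\Z^m$-components (all in $D$), so any collision in $\Z^d$ would descend to a collision in the original $A_i \oplus F_i$; and the covering identity $\bigcup_i \tilde A_i \oplus \tilde F_i = \pi^{-1}(E) = \tilde E$ is then automatic.

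The reverse direction --- descending an arbitrary solution from $\Z^d$ back to $\Z^2 \times G_0$ --- is the main obstacle, since a generic solution of the lifted equation need not be $\Lambda$-periodic, and the naive projection $A_i \coloneqq \pi(\tilde A_i)$ can a priori exhibit collisions in $A_i \oplus F_i$ coming from shifts $a_1 + \tilde f_1 = (a_2 + \lambda) + \tilde f_2$ with $0 \neq \lambda \in \Lambda$. To overcome this, my plan is to exploit the specific combinatorial rigidity of the tile construction produced in the proof of \thmref{main}: the encoding in Sections~3--8 is designed so that admissible tilings are forced into a deterministic, computation-like global structure, and this rigidity should transfer after pullback to force every solution of $\Tile(\tilde F_1, \tilde F_2; \tilde E)$ to be $\Lambda$-periodic. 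Equivalently, one simply re-executes the argument of Sections~3--8 with the $G_0$-coordinate replaced throughout by a $\Z^m$-coordinate constrained to membership in $\tilde E_0$ --- this is the ``pulling back the proof'' operation alluded to in the introduction. Once $\Lambda$-periodicity of the lifted solution is secured, $A_i \coloneqq \pi(\tilde A_i)$ yields a bona fide solution of the original equation, closing the equivalence.
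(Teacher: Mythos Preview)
Your overall strategy --- pull the construction back along the quotient $\Z^m \to G_0$ --- is the right one, and the paper follows the same plan. But there is a genuine gap in your proposal at exactly the point you flag as the ``main obstacle''. Your hope that the combinatorial rigidity of Sections~3--8 will automatically force every lifted solution to be $\Lambda$-periodic does not hold, and re-executing those sections verbatim will not close the gap without a new ingredient.

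The concrete failure is this. A basic building block in the proof of \thmref{main} is the tile $\{0\}\times G_0$ (or slight variants $[[a]]$, $[[a,b]]$), whose tiling equation $\Tile(\{0\}\times G_0;\, G\times G_0)$ forces the solution $A$ to be a graph $\{(n,f(n)):n\in G\}$. Under your naive lift --- replacing $G_0$ by a fundamental domain $D\subset\Z^m$ --- this tile becomes a rectangular box $\{0\}\times\{0,\dots,N_1-1\}\times\cdots\times\{0,\dots,N_m-1\}$. But $\Tile(\text{box};\,\Z^m)$ has enormous sliding freedom (cf.\ Example~\ref{simple-tile}): its solutions are far from being only the cosets of $\Lambda_0$, so the lifted system acquires many extra solutions that are not $\Lambda$-periodic and do not descend. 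Nothing in the encoding of Sections~3--8 prevents this, because that encoding was designed to constrain the $\Z^2$-behavior of the function $f$, not to rigidify the $G_0$-direction, which in the finite-group setting was rigid for free.

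The paper supplies the missing device: a \emph{rigid tile} $R\subset\Z^m$ (\lemref{rigid-tile}), obtained from the box by adding and removing matching ``bumps'' on opposite faces, with the property that $\Tile(R;\Z^m)_\Universe$ consists \emph{precisely} of the cosets of $\Lambda_0$. One then replaces every occurrence of the factor $G_0$ in the tiles of \propref{tile-function} by $R$ (and every occurrence in the targets by $R\oplus\Lambda_0=\Z^m$), obtaining \propref{tile-function'}; the rigidity of $R$ is exactly what forces slices of any solution to be $\Lambda_0$-cosets, recovering the graph structure and allowing the descent. Note also that the paper does not lift directly from \thmref{main}: it branches off at the common intermediate \thmref{main-hamming} and re-runs the chain \thmref{main-hamming}~$\Rightarrow$~\thmref{main-function'}~$\Rightarrow$~\thmref{main-red'}~$\Rightarrow$~\thmref{main'}, with the rigid tile inserted at the second arrow and an analogous pullback of \thmref{combine} (namely \thmref{combine'}) at the third.
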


\begin{remark}\label{extend_Rd}
One can further extend our construction in Theorem \ref{main'} to the Euclidean space $\R^d$, as follows. First,  replace each tile $F_j\subset \Z^d$, $j=1,2$, with a finite union $\tilde F_j$ of unit cubes centered in $F_j$, and similarly replace $E \subset \Z^d$ with a periodic set $\tilde E \subset \R^d$. Next, in order to make the construction rigid in the Euclidean space,  add ``bumps'' on the sides (as in the proof of \lemref{rigid-tile}).  When one does so, the only tilings of $\tilde E$ by the $\tilde F_1, \tilde F_2$ arise from tilings of $E$ by $F_1,F_2$, possibly after applying a translation, and hence the undecidability of the former tiling problem is equivalent to that of the latter.  
\end{remark}

Our construction can in principle give a completely explicit description of the sets $G_0, E_0, F_1, F_2$, but they are quite complicated (and the group $G_0$ is  large), and we have not attempted to optimize the size and complexity of these sets in order to keep the argument as conceptual as possible.

\begin{remark}
Our argument establishes an encoding for \emph{any} tiling problem $\Tile(F_1,\dots,F_J; \Z^2)$ with arbitrary number of tiles in $\Z^2$ as a tiling problem with two tiles in $\Z^2\times G_0$. However, in order to prove \thmref{main} we only need to be able to encode Wang tilings.
\end{remark}

\begin{remark}\label{algo}  A slight modification of the proof of Theorem \ref{main} also establishes the slightly stronger claim that the decision problem of whether the tiling equation $\Tile(F_1,F_2; \Z^2 \times E_0)$ is solvable for a given finite abelian group $G_0$, given finite non-empty subsets $F_1,F_2 \subset \Z^2 \times G_0$ and $E_0 \subset G_0$, is \emph{algorithmically} undecidable.  Similarly for Theorems \ref{main'}, \ref{onetile} below.  This is basically because the original undecidability result of Berger \cite{Ber} that we rely on is also phrased in the language of algorithmic undecidability; see Footnote \ref{berfoot} in Section \ref{tiling}.  We leave the details of the appropriate modification of the arguments in the context of algorithmic decidability to the interested reader.  
\end{remark}

Theorem \ref{main} supports the  belief\footnote{As a dissenting view, it was conjectured in \cite{gbn} that the translational tiling problem for $\Z^2$ with $J=2$ is (algorithmically) decidable and not aperiodic.} that the tiling problem is considerably less well behaved  for $J \geq 2$ than it is for $J=1$.  As another instance of this belief, the $J=1$ tilings enjoy a dilation symmetry (see \cite[Proposition 3.1]{BH}, \cite[Lemma 3.1]{GT}, \cite{tij}) that have no known analogue for $J \geq 2$.  We present a further distinction between the $J=1$ and $J \geq 2$ situations in Section \ref{single-multi} below, where we show that in one dimension the $J=1$ tilings exhibit a certain partial rigidity property that is not present in the $J \geq 2$ setting, and makes  any attempt to extend our methods of proof of Theorem \ref{main} to the $J=1$ case  difficult.  On the other hand, if one allows the group $G_0$ to be \emph{nonabelian}, then we can reduce the two tiles in Theorem \ref{main} to a single tile: see Section \ref{nonab-sec}.

\subsection{Overview of proof}

We now discuss the proof of Theorem \ref{main}; the proofs of Theorems \ref{main'}, \ref{onetile} are proven by modifications of the method and are discussed in Sections \ref{extension}, \ref{nonab-sec} respectively.

The arguments proceed by a series of reductions in which we successively replace the tiling equation \eqref{tile-eq} by a more tractable system of equations; see Figure \ref{fig:logic}.  

\begin{figure}
    \centering
    \begin{tikzcd}
& \hbox{\cite{Ber}} \arrow[d, Rightarrow, "\hbox{\S \ref{tiling}}"] \\
& \textrm{Thm. \ref{main-boolean}} \arrow[d, Rightarrow, "\hbox{\S \ref{boolean-sec}}"] \\
& \textrm{Thm. \ref{antipode}} \arrow[d, Rightarrow, "\hbox{\S \ref{boolean-sec}}"] \\
& \textrm{Thm. \ref{main-linear}} \arrow[dl, Rightarrow, "\hbox{\ref{linear-encoding}}"'] \arrow[d, Rightarrow, "\hbox{\S \ref{linear-sec}}"] \\
\textbf{Thm. \ref{onetile}}  & \textrm{Thm. \ref{main-hamming}} \arrow[dl, Rightarrow, "\hbox{\S \ref{hamming-sec}}"] \arrow[dr, Rightarrow, "\hbox{\S \ref{extension}}"'] \\
\textrm{Thm. \ref{main-function}} \arrow[d, Rightarrow, "\hbox{\ref{tile-function}}"] & & \textrm{Thm. \ref{main-function'}} \arrow[d,Rightarrow, "\hbox{\ref{tile-function'}}"] \\
\textrm{Thm. \ref{main-red}}  \arrow[d, Rightarrow, "\hbox{\ref{combine}}"]  && \textrm{Thm. \ref{main-red'}}  \arrow[d, Rightarrow, "\hbox{\ref{combine'}}"] \\
\textbf{Thm. \ref{main}} && \textbf{Thm. \ref{main'}}
\end{tikzcd}
    \caption{The logical dependencies between the undecidability results in this paper (and in \cite{Ber}).  For each implication, there is listed either the section where the implication is proven, or the number of the key proposition or lemma that facilitates the implication. We also remark that Proposition \ref{tile-function'} is proven using Lemma \ref{rigid-tile}, while Proposition \ref{linear-encoding} is proven using Corollary \ref{tiling-system-2}, which in turn follows from Lemma \ref{tiling-system}.}
    \label{fig:logic}
\end{figure}
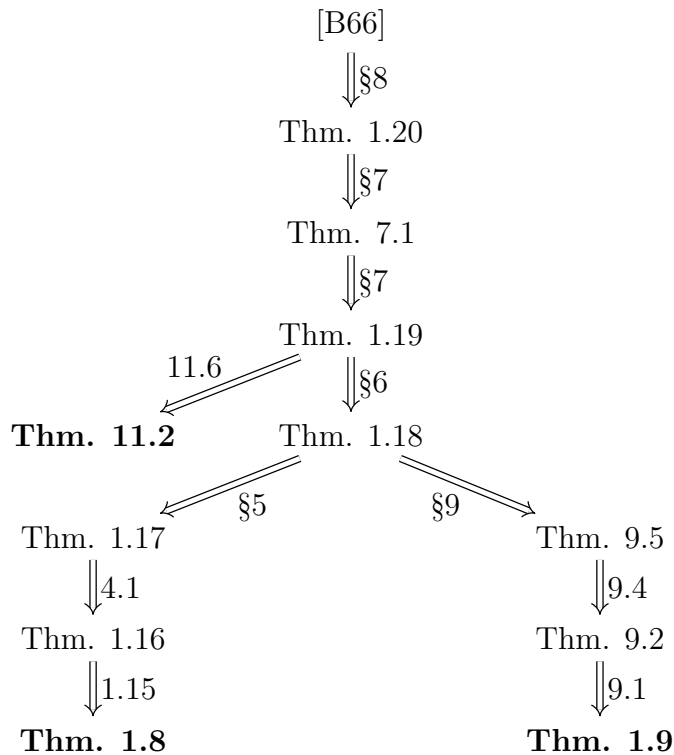

We first extend Definition \ref{undecide-mult} to \emph{systems} of tiling equations.

\begin{definition}[Undecidability and aperiodicity for systems of tiling equations with multiple tiles]\label{undecide-system}  Let $G$ be an explicit finitely generated abelian group, $J, M \geq 1$ be standard natural numbers, and for each $m=1,\dots,M$, let $F^{(m)}_1,\dots,F^{(m)}_J$ be finite subsets of $G$, and let $E^{(m)}$ be a periodic subset of $G$.
\begin{itemize}
\item[(i)]  We say that the system $\Tile(F^{(m)}_1,\dots,F^{(m)}_J;E^{(m)}), m=1,\dots,M$ is \emph{undecidable} if the assertion that there exist subsets $A_1,\dots,A_J \subset G$ that simultaneously solve  $\Tile(F^{(m)}_1,\dots,F^{(m)}_J;E^{(m)})$ for all $m=1,\dots,M$, when phrased as a first-order sentence in ZFC, is not provable within the axiom system of ZFC.  That is to say, the solution set
$$ \bigcap_{m=1}^M \Tile(F^{(m)}_1,\dots,F^{(m)}_J;E^{(m)})_{\Universe^*}$$
is non-empty in some universes $\Universe^*$ of ZFC, and empty in others.  We say that the system is \emph{decidable} if it is not undecidable.
\item[(ii)]  We say that the system
$\Tile(F^{(m)}_1,\dots,F^{(m)}_J;E^{(m)}), m=1,\dots,M$ is \emph{aperiodic} if, when working within the standard universe $\Universe$ of ZFC, this system admits a solution $A_1,\dots,A_J \subset G$, but there are no solutions for which all of the $A_1,\dots,A_J$ are periodic. That is to say, the solution set
$$ \bigcap_{m=1}^M \Tile(F^{(m)}_1,\dots,F^{(m)}_J;E^{(m)})_{\Universe}$$
is non-empty but contains no tuples of periodic sets.
\end{itemize}
\end{definition}

\begin{example}\label{tilex}  Let $G$ be an explicit finitely generated abelian group, and let $G_0$ be a explicit finite abelian group.  The solutions $A$ to the tiling equation $\Tile(\{0\} \times G_0; G \times G_0)$ are precisely those sets which are graphs
\begin{equation}\label{af}
 A = \{ (n, f(n)) : n \in G \}
 \end{equation}
for an arbitrary function $f \colon G \to G_0$.  It is possible to impose additional conditions on $f$ by adding more tiling equations to this ``base'' tiling equation $\Tile(\{0\} \times G_0; G \times G_0)$.  For instance, if in addition $H$ is a subgroup of $G_0$ and $y+H$ is a coset of $H$ in $G_0$, solutions $A$ to the system of tiling equations
$$ \Tile(\{0\} \times G_0; G \times G_0), \Tile(\{0\} \times H; G \times (y+H))$$
are precisely sets $A$ of the form \eqref{af} where the function $f$ obeys the additional\footnote{In this particular case, the former tiling equation is redundant, being a consequence of the latter. However, we choose to retain the former equation in this example to illustrate the principle of imposing additional constraints on the function $f$ by the insertion of additional tiling equations.} constraint $f(n) \in y+H$ for all $n \in G$.    As a further example, if $-y_0, y_0$ are distinct elements of $G_0$, and $h$ is a non-zero element of $G$, then solutions $A$ to the system of tiling equations
$$ \Tile(\{0\} \times G_0; G \times G_0), \Tile(\{0,-h\} \times \{0\}; G \times \{-y_0,y_0\})$$
are precisely sets $A$ of the form \eqref{af} where the function $f$ takes values in $\{-y_0,y_0\}$ and obeys the additional constraint $f(n+h) = -f(n)$ for all $n \in G$. In all three cases one can verify that the system of tiling equations is decidable and not aperiodic.
\end{example}

We then have

\begin{theorem}[Combining multiple tiling equations into a single equation]\label{combine}  Let $J,M \geq 1$, let $G = \Z^d \times G_0$ be an explicit finitely generated abelian group for some explicit finite abelian group $G_0$.  Let $\Z_N$ be a cyclic group with $N>M$, and for each $m=1,\dots,M$ let $F_1^{(m)},\dots,F_J^{(m)}$ be finite non-empty subsets of $G$ and $E_0^{(m)}$ a subset of $G_0$.  Define the finite sets $\tilde F_1,\dots,\tilde F_J \subset G \times \Z_N$ and the set $\tilde E_0 \subset G_0 \times \Z_N$ by 
\begin{equation}\label{fj-stack}
 \tilde F_j \coloneqq \biguplus_{m=1}^M F_j^{(m)} \times \{m\}
 \end{equation}
and
\begin{equation}\label{e0-stack}
 \tilde E_0 \coloneqq \biguplus_{m=1}^M E_0^{(m)} \times \{m\}.
 \end{equation}
\begin{itemize}
\item[(i)]  The system $\Tile(F_1^{(m)},\dots,F_J^{(m)}; \Z^d \times E_0^{(m)}), m=1,\dots,M$ of tiling equations is aperiodic if and only if the tiling equation
$\Tile(\tilde F_1,\dots,\tilde F_J; \Z^d \times \tilde E_0)$ is aperiodic.
\item[(ii)]  The system $\Tile(F_1^{(m)},\dots,F_J^{(m)}; \Z^d \times E_0^{(m)}), m=1,\dots,M$ of tiling equations is undecidable if and only if the tiling equation
$\Tile(\tilde F_1,\dots,\tilde F_J; \Z^d \times \tilde E_0)$ is undecidable.
\end{itemize}
\end{theorem}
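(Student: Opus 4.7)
The plan is to establish a ZFC-definable bijection between solutions to the system of tiling equations and solutions to the single combined equation, given by $(\tilde A_1,\dots,\tilde A_J) \leftrightarrow (\tilde A_1 \times \{0\},\dots,\tilde A_J \times \{0\})$. Once this correspondence is proven and shown to preserve periodicity, parts (i) and (ii) both follow, the latter because the construction is definable in ZFC and hence survives passage to any universe $\Universe^*$.

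The easy direction (system $\Rightarrow$ combined) is a direct calculation: if $(\tilde A_j)_{j=1}^J$ solves each equation of the system, then for $A_j \coloneqq \tilde A_j \times \{0\}$ one computes
$$A_j \oplus \tilde F_j = \biguplus_{m=1}^M (\tilde A_j \oplus F_j^{(m)}) \times \{m\},$$
and summing in $j$ yields $\biguplus_{m=1}^M (\Z^d \times E_0^{(m)}) \times \{m\} = \Z^d \times \tilde E_0$, as required.

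The heart of the argument is the converse. Given a solution $(A_1,\dots,A_J)$ of the combined equation, I slice by the $\Z_N$-coordinate: for each $c \in \Z_N$ set $A_j^{(c)} \coloneqq \{ g \in G : (g,c) \in A_j \}$. The contribution of $A_j \oplus (F_j^{(m')} \times \{m'\})$ at $\Z_N$-coordinate $c$ is $A_j^{(c-m')} \oplus F_j^{(m')}$, so the combined equation decomposes, slice by slice, as
$$\biguplus_{j=1}^J \biguplus_{m'=1}^M A_j^{(c-m')} \oplus F_j^{(m')} = \begin{cases} \Z^d \times E_0^{(c)}, & c \in \{1,\dots,M\}, \\ \emptyset, & c \in \Z_N \setminus \{1,\dots,M\}. \end{cases}$$
For $c \notin \{1,\dots,M\}$, the nonemptiness of each $F_j^{(m')}$ forces every $A_j^{(c-m')}$ on the left to be empty; equivalently, $A_j^{(c')}$ can be nonempty only when $c' + m'' \in \{1,\dots,M\}$ for \emph{every} $m'' \in \{1,\dots,M\}$, i.e., only when $c'$ is a \emph{period} of $\{1,\dots,M\} \subset \Z_N$. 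A short check using $N > M$ shows the only such period is $c' = 0$: if $c' \in \{1,\dots,N-M-1\}$ then $c'+\{1,\dots,M\}$ reduces to $\{c'+1,\dots,c'+M\}$ without wraparound and cannot equal $\{1,\dots,M\}$; while if $c' \in \{N-M,\dots,N-1\}$ then $c' + (N - c') \equiv 0 \pmod{N}$, so $0 \in c' + \{1,\dots,M\}$, contradicting $0 \notin \{1,\dots,M\}$. Hence $A_j \subset G \times \{0\}$, and setting $\tilde A_j \coloneqq A_j^{(0)}$ the equations at $c \in \{1,\dots,M\}$ reduce exactly to the $M$ equations of the system.

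It remains to transfer periodicity and to draw the undecidability conclusion. If $A = \tilde A \times \{0\}$ is a finite union of cosets of a finite-index subgroup $K \le G \times \Z_N$, then each such coset must lie in $G \times \{0\}$, forcing $K \subset G \times \{0\}$; identifying $K$ with a subgroup of $G$, it is then finite-index in $G$ (since $[G \times \Z_N : K] = N [G:K]$) and $\tilde A$ is the corresponding finite union of cosets of $K$ in $G$; the reverse implication is immediate. This gives (i). For (ii), the whole construction and the period lemma are first-order statements about the standard integers $M, N$, so they transfer verbatim to any universe $\Universe^*$, ensuring that the two solution sets are simultaneously nonempty (or empty) in every universe and yielding undecidability equivalence. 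The one substantive step is the period analysis in $\Z_N$; everything else is bookkeeping.
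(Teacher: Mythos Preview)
Your proof is correct and follows essentially the same approach as the paper: both arguments reduce to showing that any solution $A_j$ of the combined equation must lie in $G \times \{0\}$, via the observation that a nonzero $\Z_N$-coordinate $c'$ would force $c' + \{1,\dots,M\} \subset \{1,\dots,M\}$, which is impossible for $N > M$. Your presentation via slicing and your explicit periodicity transfer are somewhat more detailed than the paper's (which handles the key step by picking a single offending element and leaves periodicity to ``an inspection of the argument''), but the substance is identical.
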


Theorem \ref{combine} can be established by easy elementary considerations; see Section \ref{combine-sec}. In view of this theorem, Theorem \ref{main} now reduces to the following statement.

\begin{theorem}[An undecidable system of tiling equations with two tiles in $\Z^2\times G_0$]\label{main-red}  There exists an explicit finite abelian group $G_0$, a standard natural number $M$, and for each $m=1,\dots,M$ there exist finite non-empty sets $F_1^{(m)}, F_2^{(m)} \subset \Z^2 \times G_0$ and $E_0^{(m)} \subset G_0$ such that the system of tiling equations $\Tile(F^{(m)}_1,F^{(m)}_2; \Z^2 \times E_0^{(m)}), m=1,\dots,M$ is undecidable.
\end{theorem}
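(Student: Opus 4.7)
The plan is to deduce Theorem \ref{main-red} from Theorem \ref{main-function} (which is itself proven earlier in the chain via \cite{Ber} $\Rightarrow$ Thm.~\ref{main-boolean} $\Rightarrow$ Thm.~\ref{antipode} $\Rightarrow$ Thm.~\ref{main-linear} $\Rightarrow$ Thm.~\ref{main-hamming}) using the tile-encoding device recorded in Proposition \ref{tile-function}. Theorem \ref{main-function} supplies an undecidable system of \emph{functional} equations on one or more unknown functions $f \colon \Z^2 \to G_0$ valued in an explicit finite abelian group $G_0$. The goal is then to faithfully encode each such functional equation as a tiling equation with only two tiles in $\Z^2 \times G_0$, in such a way that the whole system of functional equations becomes equivalent (even across nonstandard universes of ZFC) to the resulting system of 2-tile tiling equations, thereby transferring undecidability.

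The encoding in Proposition \ref{tile-function} will generalize the mechanism from Example \ref{tilex}. One tile, essentially of the form $\{0\} \times G_0$ (or $\{0\} \times H$ for a suitable subgroup $H \leq G_0$), forces the first indeterminate $\X_1$ to be the graph of some function $f \colon \Z^2 \to G_0$; the second tile $F_2^{(m)}$ together with a carefully chosen periodic target $\Z^2 \times E_0^{(m)}$ encodes the actual functional constraint, using the second indeterminate $\X_2$ as an auxiliary ``slack'' set. The extra degree of freedom provided by the second tile is precisely what allows one to enforce constraints relating several values of $f$ (beyond the simple membership or involution-type constraints already available with a single tile in Example \ref{tilex}), matching the expressiveness required to capture the functional equations delivered by Theorem \ref{main-function}. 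Applying this encoding to each equation in the undecidable functional system produces an explicit finite abelian group $G_0$, a standard $M$, and finite nonempty $F_1^{(m)}, F_2^{(m)} \subset \Z^2 \times G_0$ and $E_0^{(m)} \subset G_0$ indexed by $m = 1,\dots,M$.

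The main obstacle will be establishing Proposition \ref{tile-function} rigorously, i.e.\ ensuring the encoding is \emph{tight} in both directions and in every universe $\Universe^*$ of ZFC. In the ``easy'' direction, any function $f$ solving the functional equation produces a valid pair $(A_1, A_2)$ via the graph construction and an explicit formula for the slack. The subtler direction is to show that every solution $(A_1, A_2) \in \Tile(F_1^{(m)}, F_2^{(m)}; \Z^2 \times E_0^{(m)})_{\Universe^*}$ must arise in this way: the disjoint-union structure forces $A_1$ to be the graph of \emph{some} function $f$ on each fiber, and the geometry of $F_2^{(m)}$ together with the shape of $E_0^{(m)}$ forces that function to satisfy the intended functional equation, with no spurious ``phantom'' solutions that might accidentally be periodic. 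Once this per-equation equivalence is established, combining it over $m=1,\dots,M$ gives an equivalence of solution sets in every universe, and the undecidability from Theorem \ref{main-function} transports verbatim to Theorem \ref{main-red}.
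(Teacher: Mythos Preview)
Your high-level plan is exactly the paper's: Theorem \ref{main-red} is deduced from Theorem \ref{main-function} via Proposition \ref{tile-function}, and the argument is uniform across universes so undecidability transfers. However, your description of the encoding mechanism in Proposition \ref{tile-function} is off in a way that matters. Theorem \ref{main-function} produces a system in \emph{two} unknown functions $f_1,f_2 \colon \Z^2 \times \Z_2 \to G_0$ (note the extra $\Z_2$ in the domain), and in the paper's encoding \emph{both} indeterminates $\X_1,\X_2$ are forced to be graphs of $f_1,f_2$ respectively; neither plays the role of an auxiliary ``slack'' set. Your proposed asymmetric scheme (one graph, one slack) cannot encode a functional equation genuinely coupling two unknown functions, which is exactly what \eqref{func-eq} is.

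The actual device in Proposition \ref{tile-function} enlarges the ambient finite group by an extra $\Z_N$ factor (with $N>J=2$) and uses two families of equations: the ``permutation'' equations \eqref{tile-2}, namely $\Tile\bigl((\{0\}\times[[\sigma(j)]])_{j=1}^J;\,G\times[[1,J]]\bigr)$ for every permutation $\sigma$ of $\{1,\dots,J\}$, which force each $A_j$ into $G\times[[0]]$ and then into graph form \eqref{aj-form}; and the ``content'' equations \eqref{tile-1}, in which each tile has the symmetric shape $-H_j^{(m)}\times(\{0\}\times F_j^{(m)}\uplus[[j]])$, the $[[j]]$ bump guaranteeing nonemptiness and the $-H_j^{(m)}$ factor producing the shifts $f_j(n+h_j)$ in \eqref{func-eq-stack}. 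Once you replace your graph-plus-slack picture with this symmetric two-graph picture and add the $\Z_N$ layer, the two directions you outline go through essentially as you describe.
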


The ability to now impose an arbitrary number of tiling equations grants us a substantial amount of flexibility.  In Section \ref{function-sec} we will take advantage of this flexibility to replace the system of tiling equations with a system of \emph{functional} equations, basically by generalizing the constructions provided in Example \ref{tilex}.  Specifically, we will reduce Theorem \ref{main-red} to the following statement. 

\begin{theorem}[An undecidable system of functional equations]\label{main-function}  There exists an explicit finite abelian group $G_0$, a standard integer $M \geq 1$, and for each $m=1,\dots,M$ there exist (possibly empty) finite subsets $H_1^{(m)}, H_2^{(m)}$ of $\Z^2 \times \Z_2$ and (possibly empty sets) $F_1^{(m)}, F_2^{(m)}, E^{(m)} \subset G_0$ for $m=1,\dots,M$ such that the question of whether there exist  functions $f_1,f_2 \colon \Z^2 \times \Z_2 \to G_0$ that solve
the system of functional equations
\begin{equation}\label{func-eq}
\biguplus_{h_1 \in H_1^{(m)}} (F_1^{(m)} + f_1(n+h_1)) \uplus
\biguplus_{h_2 \in H_2^{(m)}} (F_2^{(m)} + f_2(n+h_2)) = E^{(m)}
\end{equation}
for all $n \in \Z^2\times \Z_2$ and $m=1,\dots,M$ is undecidable (when expressed as a first-order sentence in ZFC).
\end{theorem}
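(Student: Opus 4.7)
The plan is to reduce from Berger's theorem on the undecidability of the Wang tile problem. Given an arbitrary finite Wang tile set $\mathcal{W}$ with color-matching rules, I would design a finite abelian group $G_0$, together with finite data $H_j^{(m)}, F_j^{(m)}, E^{(m)}$, so that the system \eqref{func-eq} admits a solution $f_1, f_2 \colon \Z^2 \times \Z_2 \to G_0$ if and only if $\mathcal{W}$ admits a valid tiling of $\Z^2$.

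The key observation is that \eqref{func-eq} is really a \emph{pointwise local} constraint: for each fixed $n \in \Z^2 \times \Z_2$, it asserts that the translates $F_1^{(m)} + f_1(n+h_1)$ and $F_2^{(m)} + f_2(n+h_2)$, as $h_j$ ranges over $H_j^{(m)}$, are pairwise disjoint and together cover $E^{(m)}$. Thus the framework is expressive enough to impose any constraint of the form ``the tuple of values of $f_1, f_2$ on a finite neighborhood of $n$ lies in some prescribed subset of $G_0^{|H_1^{(m)}|+|H_2^{(m)}|}$'', provided such subsets can be realized as solution sets of individual tiling equations inside $G_0$.

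Concretely, I would let $f_1$ encode the Wang tile assigned to each lattice point by choosing $G_0$ large enough to contain one distinguished ``codeword'' per Wang tile, and use $f_2$ as an auxiliary function (with the $\Z_2$ factor of the domain serving as an extra index) both to enforce that $f_1$ only takes codeword values and to separate the direction-dependent consistency checks. For each pair of color-compatible tiles in each of the two cardinal directions, I would include one tiling equation whose shift set $H_1^{(m)}$ is $\{0, e_i\}$ and whose $F_1^{(m)}, E^{(m)}$ are arranged so that $(F_1^{(m)} + a) \uplus (F_1^{(m)} + b) = E^{(m)}$ holds precisely when $(a,b)$ represents a compatible adjacency of Wang tiles in direction $i$.

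The hard part will be this realization step: encoding an arbitrary finite compatibility relation $R \subset G_0^2$ via equations of the form $(F + a) \uplus (F + b) = E$ in $G_0$. A natural approach is to take $G_0 = \mathbb{F}_2^k$ (or a large product of cyclic groups) so that the disjointness $F \cap (F+c) = \emptyset$ is controlled by which differences $c$ appear in $F - F$, and to select $F$ as a union of carefully chosen affine subspaces so that only the ``compatible'' differences avoid overlap. I expect this step to lead naturally to a Hamming-style encoding, consistent with the role of Theorem \ref{main-hamming} in the logical-dependency diagram, and the main technical difficulty will be verifying that no spurious non-Wang solutions are introduced --- in other words, that the tiling equations are sharp enough to cut out exactly the Wang-valid configurations.
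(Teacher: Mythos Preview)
Your high-level plan --- reduce from Berger's Wang tile problem by encoding a Wang tiling as a solution to \eqref{func-eq} --- is correct and is also the paper's starting point. However, your proposal stops exactly at the point where the real content begins: the ``realization step'' of encoding an arbitrary finite adjacency relation $R$ via equations $(F+a)\uplus(F+b)=E$ in an abelian $G_0$. Your suggestion of choosing $F$ as a union of affine subspaces in $\mathbb{F}_2^k$ so that disjointness is governed by $F-F$ runs into a concrete obstruction: with the same $F$ on both sides, the constraint $(F+a)\uplus(F+b)=E$ is symmetric in $(a,b)$, so it can only cut out symmetric relations, whereas Wang horizontal/vertical adjacency is not symmetric. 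More generally, any functional equation of the form \eqref{func-eq} is invariant under the global sign flip $(f_1,f_2)\mapsto(-f_1,-f_2)$ (shifting all values by a fixed group element acts the same on both sides), so one cannot directly single out individual codewords. Overcoming these symmetries is not a detail; it is the whole proof.

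The paper's route is genuinely different from yours and is organized precisely around these symmetries. Rather than encoding Wang adjacency directly, it first passes to a general local Boolean constraint (Theorem~\ref{main-boolean}), then \emph{symmetrizes} $\Omega$ to $\Omega=-\Omega$ so that the unavoidable sign ambiguity is harmless; the shifts $h_l$ are absorbed by introducing a second family $f_{2,d}$ tied to the first via $f_{2,d}(n+h_d)=-f_{1,d}(n)$. The symmetric constraint is rewritten as a conjunction of \emph{antipode-avoiding} conditions $(f_{j,d}(n))_d\notin\{\pm\epsilon\}$, and the key trick is that each such condition is equivalent to a single \emph{linear} equation $\sum_d \epsilon_d f_{j,d}+\sum y'_i=0$ after adjoining $D_0-2$ auxiliary $\{-1,1\}$-valued slack variables. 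Linear equations over $\Z_N$ are exactly what the functional-equation formalism encodes easily (as coset membership in a subgroup $H_j^{(m)}$), and the $\Z_2$ factor in the domain is used, not as an ``extra index'', but to force $\tilde f_j(n,t+1)=-\tilde f_j(n,t)$, which pins the range down to the Hamming cube $\{-1,1\}^D$. Your plan would need analogues of all three of these devices; as written it has none of them.
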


In the above theorem, the functions $f_1,f_2$ can range freely in the finite group $G_0$.  By taking advantage of the $\Z_2$ factor in the domain, we can restrict $f_1,f_2$ to range instead in a Hamming cube $\{-1,1\}^D \subset \Z_N^D$, which will be more convenient for us to work with, at the cost of introducing an additional sign in the functional equation \eqref{func-eq}.  More precisely, in Section \ref{hamming-sec} we reduce Theorem \ref{main-function} to

\begin{theorem}[An undecidable system of functional equations in the Hamming cube]\label{main-hamming}  There exist  standard integers $N > 2$ and $D,M \geq 1$, and for each $m=1,\dots,M$ there exist shifts $h_1^{(m)}, h_2^{(m)} \in \Z^2$ and (possibly empty sets) $F_1^{(m)}, F_2^{(m)}, E^{(m)} \subset \Z_N^D$ for $m=1,\dots,M$ such that the question of whether there exist functions $f_1,f_2 \colon \Z^2 \to \{-1,1\}^D$ that solve
the system of functional equations
\begin{equation}\label{func-eq-1}
(F_1^{(m)} + \epsilon f_1(n+h_1^{(m)})) \uplus
(F_2^{(m)} + \epsilon f_2(n+h_2^{(m)})) = E^{(m)}
\end{equation}
for all $n \in \Z^2$, $m=1,\dots,M$, and $\epsilon=\pm 1$ is undecidable (when expressed as a first-order sentence in ZFC).
\end{theorem}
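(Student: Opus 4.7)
I propose to prove Theorem \ref{main-hamming} by reducing a known undecidable tiling problem on $\Z^2$, such as Berger's \cite{Ber} or Ollinger's \cite{ollinger-2}, to a system of Hamming-cube-valued functional equations in the form \eqref{func-eq-1}. The reduction encodes the underlying tile assignment as a pair of Hamming-cube-valued functions and expresses each local tile-compatibility constraint as a Hamming-form equation.

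First, I would fix an explicit finite set of translational tiles $F_1, \dots, F_J \subset \Z^2$ (equivalently, a set of Wang tiles) for which the translational tiling problem $\Tile(F_1, \dots, F_J; \Z^2)$ is undecidable. A valid tiling can be described by an assignment $\phi \colon \Z^2 \to \{1, \dots, J\}$ together with local compatibility rules between adjacent cells. I would encode this assignment via a pair of functions $f_1, f_2 \colon \Z^2 \to \{-1,1\}^D$ with $D$ chosen large enough to binary-expand the tile type at each position and to carry any auxiliary edge-matching information. Since \eqref{func-eq-1} must hold for both $\epsilon = +1$ and $\epsilon = -1$, the encoding should be arranged so that it is symmetric under the global sign flip $f_j \mapsto -f_j$ (for instance, by using a ``balanced'' or ``centered'' one-hot-like representation), which effectively doubles the number of usable Hamming bits.

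Second, each local Wang-type adjacency constraint (a compatibility rule on the pair of tile types at positions $n$ and $n+e_i$ for a unit vector $e_i \in \Z^2$) becomes a constraint on the pair $(f_j(n), f_j(n+e_i))$. I would realize each such rule as a single Hamming equation by taking shifts $h_1^{(m)} = 0$ and $h_2^{(m)} = e_i$ in \eqref{func-eq-1}, and designing sets $F_1^{(m)}, F_2^{(m)}, E^{(m)} \subset \Z_N^D$ (with $N$ chosen sufficiently large to avoid wrap-around in $\Z_N^D$) so that the disjoint-union identity $(F_1^{(m)} + \epsilon f_1(n)) \uplus (F_2^{(m)} + \epsilon f_2(n+e_i)) = E^{(m)}$ holds at every $n \in \Z^2$ and every $\epsilon = \pm 1$ if and only if the pair of Hamming values at $n$ and $n+e_i$ represents a locally legal Wang adjacency. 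Different local constraints (horizontal, vertical, and per-tile) are accommodated by using different values of $m$, exploiting the multi-equation structure $m = 1, \dots, M$ of the system. Once the per-constraint encoding is built, I would check that the Hamming system's solvability in any universe of ZFC is equivalent to the original Wang system's solvability in that universe, thereby transferring undecidability.

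The main obstacle I anticipate is the rigidity of \eqref{func-eq-1}: only two terms on the LHS, single shifts per function, and a universal quantifier over $\epsilon = \pm 1$. Designing sets $F_j^{(m)}, E^{(m)}$ so that the two translates $F_j^{(m)} + \epsilon f_j(n+h_j^{(m)})$ are always disjoint (so that $\uplus$ is defined at every legal $(f_1,f_2)$) and jointly equal $E^{(m)}$ exactly at legal configurations requires careful combinatorial design in $\Z_N^D$. This will likely force $D$ to be enlarged well beyond $\log_2 J$ to carry parity-check or one-hot auxiliary coordinates, and each Wang compatibility rule may need to be split into several Hamming equations. Once these encoding details are fixed, the rest of the proof is a routine verification that solvability is preserved under the encoding.
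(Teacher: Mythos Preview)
Your overall strategy---reduce a known undecidable Wang/translational tiling problem to the Hamming system---matches the paper's, but the paper does \emph{not} encode Wang adjacency rules directly into equations of the form \eqref{func-eq-1}. Instead it passes through two intermediate reductions (Theorems \ref{main-boolean}, \ref{antipode}, \ref{main-linear}): first arbitrary local Boolean constraints are rewritten as antipode-avoiding constraints $(f_{j,d}(n))_d \notin \{\pm\epsilon_j^{(m)}\}$ together with shifted sign-flip equations $f_{2,d}(n+h_d) = -f_{1,d}(n)$; then each antipode-avoiding constraint is converted (by adjoining $D_0-2$ auxiliary Boolean coordinates) into a genuine \emph{linear} equation $\sum_d a_{j,d}^{(m)} f_{j,d}(n)=0$. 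Only at that point does the encoding into \eqref{func-eq-1} become transparent: a linear equation is membership in a subgroup $H_j^{(m)}\leq\Z_N^D$, which is expressed by the one-term equation $H_j^{(m)}+\epsilon f_j(n)=H_j^{(m)}$ (i.e.\ $F_1^{(m)}=E^{(m)}=H_j^{(m)}$, $F_2^{(m)}=\emptyset$), while each shifted sign-flip is expressed by $(\pi_d^{-1}(\{0\})+\epsilon f_1(n))\uplus(\pi_d^{-1}(\{0\})+\epsilon f_2(n+h_d))=\pi_d^{-1}(\{-1,1\})$.

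Your proposal skips these intermediate steps and asserts one can ``design sets $F_1^{(m)},F_2^{(m)},E^{(m)}$'' so that \eqref{func-eq-1} cuts out an arbitrary Wang adjacency relation on the pair $(f_1(n),f_2(n+e_i))$. This is the genuine gap. A single equation \eqref{func-eq-1} can only impose constraints of a very special shape (two translates partitioning a fixed set, symmetrically in $\epsilon$), and it is not at all clear that an arbitrary symmetric relation on $\{-1,1\}^D\times\{-1,1\}^D$ can be realised this way, even allowing many equations. The paper's detour through linear equations is precisely what makes the encoding work: subgroup cosets and coordinate hyperplanes are exactly the sets for which the $\uplus$-identity behaves well. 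Separately, your plan implicitly needs $f_1$ and $f_2$ to carry the \emph{same} tile assignment (so that comparing $f_1(n)$ with $f_2(n+e_i)$ is comparing the tile at $n$ with the tile at $n+e_i$), but you give no mechanism to enforce $f_1=f_2$; the paper instead deliberately makes $f_2$ a shifted sign-flip of $f_1$ via the equations \eqref{func-eq-3}, which is the only coupling between the two functions that the format \eqref{func-eq-1} readily allows. Without the linearisation step and the $f_1/f_2$ coupling mechanism, your plan does not yet constitute a proof.
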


The next step is to replace the functional equations \eqref{func-eq-1} with \emph{linear} equations on Boolean functions $f_{j,d} \colon \Z^2 \to \{-1,1\}$ (where we now view $\{-1,1\}$ as a subset of the integers).  More precisely, in Section \ref{linear-sec} we reduce Theorem \ref{main-hamming} to

\begin{theorem}[An undecidable system of linear equations for Boolean functions]\label{main-linear}  There exist  standard integers $D \geq D_0 \geq 1$ and $M_1, M_2 \geq 1$, integer coefficients $a_{j,d}^{(m)} \in \Z$ for $j=1,2$, $d=1,\dots,D$, $m=1,\dots,M_j$, and shifts $h_d \in \Z^2$ for $d=1,\dots,D_0$ such that the question of whether there exist functions $f_{j,d} \colon \Z^2 \to \{-1,1\} \subset \Z$ for $j=1,2$ and $d=1,\dots,D$ that solve
the system of linear functional equations
\begin{equation}\label{func-eq-2}
\sum_{d=1}^D a_{j,d}^{(m)} f_{j,d}(n) = 0
\end{equation}
for all $n \in \Z^2$, $j=1,2$, and $m = 1,\dots,M_j$, as well as the system of linear functional equations
\begin{equation}\label{func-eq-3}
f_{2,d}(n+h_d) = - f_{1,d}(n)
\end{equation}
for all $n \in \Z^2$ and $d=1,\dots,D_0$, is undecidable (when expressed as a first-order sentence in ZFC).
\end{theorem}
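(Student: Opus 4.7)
My approach is to derive Theorem~\ref{main-linear} by reducing from Theorem~\ref{antipode}, the immediately upstream result in Figure~\ref{fig:logic}. I expect Theorem~\ref{antipode} to supply an undecidable system of Boolean functional equations on a single function $\phi \colon \Z^2 \to \{-1,1\}^D$ carrying a natural antipodal/shift structure. The target is to repackage that system into the form \eqref{func-eq-2}--\eqref{func-eq-3} by introducing an auxiliary function family $f_2$ coupled to $f_1 = \phi$.

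The key observation is that the coupling \eqref{func-eq-3} converts non-local constraints on $f_1$ into local constraints on $f_2$. Concretely, setting $f_1 = \phi$ and defining $f_2$ by $f_{2,d}(n) \coloneqq -f_{1,d}(n - h_d)$ makes \eqref{func-eq-3} automatic, while any shifted linear combination $\sum_d b_d \, \phi_d(n - h_d)$ becomes the pointwise expression $-\sum_d b_d \, f_{2,d}(n)$. By choosing the shifts $h_1, \ldots, h_{D_0}$ to cover exactly the offsets needed in the equations provided by Theorem~\ref{antipode}, each such equation can be rewritten as a pointwise linear constraint on either $f_1$ (for its purely local part) or $f_2$ (for its shifted part), giving equations of the required form \eqref{func-eq-2}. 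The equivalence of solution spaces is then a direct check: from a solution $\phi$ of the antipode system, the pair $(f_1, f_2) \coloneqq (\phi, -\phi(\cdot - h_\bullet))$ solves \eqref{func-eq-2}--\eqref{func-eq-3}, and conversely, from any $(f_1, f_2)$ satisfying \eqref{func-eq-3}, the function $\phi \coloneqq f_1$ satisfies the original system, since $f_2$ is forced to be the shifted negation of $f_1$. Undecidability therefore transfers directly from Theorem~\ref{antipode} to Theorem~\ref{main-linear}.

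The main obstacle, as I see it, will be the precise form of the constraints that Theorem~\ref{antipode} produces. If those are already expressible as integer-coefficient linear sums over $\{-1,1\}^D$, the translation above is essentially bookkeeping. If instead they are more general Boolean/antipodal conditions (for instance, membership in some specified antipodal subset of $\{-1,1\}^K$), then I would need to enlarge $D$ and introduce auxiliary coordinates of $f_1, f_2$ acting as linear indicator variables encoding those subsets as intersections of linear hyperplanes in the enlarged cube. Balancing the shared shift parameters $h_d$ (which depend only on $d$, not on $m$ or $j$), the requirement $D_0 \leq D$, and the partition of each equation between the $j=1$ and $j=2$ families is where the delicate bookkeeping will live.
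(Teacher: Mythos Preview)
Your overall direction---reducing from Theorem~\ref{antipode}---is correct, but you have misread what Theorem~\ref{antipode} actually provides, and as a result your proposal spends its effort on the part that is already done and glosses over the part that is not.

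Theorem~\ref{antipode} already comes with \emph{two} Boolean families $f_{1,d}, f_{2,d}$ for $d=1,\dots,D_0$, coupled by the constraint $f_{2,d}(n+h_d) = -f_{1,d}(n)$. That constraint \emph{is} equation~\eqref{func-eq-3}, verbatim. So there is no need to ``introduce an auxiliary function family $f_2$'' or to manage shifts: the two-family structure with the correct shifts is handed to you for free. All of the delicate bookkeeping you anticipate about balancing shared shift parameters simply does not arise.

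The substantive content of the reduction lies entirely in what you flag as an obstacle and then leave unresolved: converting the antipode-avoiding constraints \eqref{antipode-1}, namely $(f_{j,d}(n))_{d=1}^{D_0} \notin \{-\epsilon_j^{(m)}, +\epsilon_j^{(m)}\}$, into \emph{linear} equations of the form~\eqref{func-eq-2}. Your suggestion of ``linear indicator variables encoding those subsets as intersections of linear hyperplanes'' is too vague to constitute a plan, and the natural reading of it (cutting out an arbitrary symmetric set by hyperplanes through the cube) does not work directly. The actual mechanism is a slack-variable trick: for $y \in \{-1,1\}^{D_0}$ one has $y \notin \{\pm\epsilon\}$ if and only if $\left|\sum_d \epsilon_d y_d\right| \le D_0 - 2$, which (by parity) holds if and only if there exist $y'_1,\dots,y'_{D_0-2} \in \{-1,1\}$ with
\[
\sum_{d=1}^{D_0} \epsilon_d y_d + \sum_{i=1}^{D_0-2} y'_i = 0.
\]
Adjoining a fresh block of $D_0-2$ slack coordinates for each constraint $(j,m)$ enlarges the index set from $D_0$ to $D = D_0 + M(D_0-2)$ and turns each antipode-avoidance into a single linear equation in the enlarged family. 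The new coordinates need no shift coupling, so~\eqref{func-eq-3} stays exactly as supplied, with $D_0$ unchanged. This is the missing idea; once you have it, the rest really is bookkeeping.
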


Now that we are working with linear equations for Boolean functions, we can encode a powerful class of constraints, namely all local Boolean constraints.  In Section \ref{boolean-sec} we will reduce Theorem \ref{main-linear} to

\begin{theorem}[An undecidable local Boolean constraint]\label{main-boolean}  There exist standard integers $D, L \geq 1$, shifts $h_1,\dots,h_L \in \Z^2$, and a set $\Omega \subset \{-1,1\}^{DL}$ such that the question of whether there exist functions $f_d \colon \Z^2 \to \{-1,1\}$, $d=1,\dots,D$ that solve the constraint
\begin{equation}\label{boolean}
(f_d(n+h_l))_{d=1,\dots,D; l=1,\dots,L} \in \Omega 
\end{equation}
for all $n \in \Z^2$ is undecidable (when expressed as a first-order sentence in ZFC).
\end{theorem}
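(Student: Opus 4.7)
The plan is to reduce Berger's undecidable Wang tiling problem directly to an instance of \eqref{boolean}. A Wang tiling is specified by purely local nearest-neighbor compatibility on a $\Z^2$-valued coloring, so it embeds transparently into the form \eqref{boolean}.

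First I invoke Berger \cite{Ber}: his construction gives algorithmic undecidability for the Wang domino problem in $\Z^2$, and by the standard observation recorded in Remark \ref{algo-logic}, there must be a particular Wang tile set that yields a logically undecidable instance. Thus I fix a finite color palette $\mathcal{C}$ and a finite set $\mathcal{T}$ of Wang tiles --- each $t \in \mathcal{T}$ being a quadruple $(t_\mathrm{N}, t_\mathrm{S}, t_\mathrm{E}, t_\mathrm{W}) \in \mathcal{C}^4$ --- such that the ZFC sentence ``there exists $\tau \colon \Z^2 \to \mathcal{T}$ with $\tau(n)_\mathrm{E} = \tau(n+(1,0))_\mathrm{W}$ and $\tau(n)_\mathrm{N} = \tau(n+(0,1))_\mathrm{S}$ for every $n\in\Z^2$'' is logically undecidable.

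Next I encode tiles as $\{-1,1\}$-strings. Set $D \coloneqq \lceil \log_2 |\mathcal{T}| \rceil$ and fix an injection $\iota \colon \mathcal{T} \hookrightarrow \{-1,1\}^D$; a candidate $\tau$ is then represented by $D$ Boolean functions via $(f_1(n),\dots,f_D(n)) = \iota(\tau(n))$. Take $L \coloneqq 3$ with shifts $h_1 \coloneqq (0,0)$, $h_2 \coloneqq (1,0)$, $h_3 \coloneqq (0,1)$, and define $\Omega \subset \{-1,1\}^{DL}$ to be the explicit finite set of triples $(v^{(1)}, v^{(2)}, v^{(3)}) \in (\{-1,1\}^D)^3$ such that $v^{(1)}, v^{(2)}, v^{(3)} \in \iota(\mathcal{T})$, and setting $t^{(l)} \coloneqq \iota^{-1}(v^{(l)})$, one has $t^{(1)}_\mathrm{E} = t^{(2)}_\mathrm{W}$ and $t^{(1)}_\mathrm{N} = t^{(3)}_\mathrm{S}$.

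The verification is now routine: \eqref{boolean} at a point $n$ asserts exactly that the vectors $(f_d(n))_d$, $(f_d(n+(1,0)))_d$, and $(f_d(n+(0,1)))_d$ all encode valid tiles and that the tile at $n$ matches its east and north neighbors on the shared edges, so imposing \eqref{boolean} for every $n\in\Z^2$ is equivalent to requiring that $(f_d)_{d=1,\dots,D}$ arise from a valid Wang tiling of $\Z^2$ by $\mathcal{T}$. Since $\iota$ and $\Omega$ are finite and ZFC-definable, this equivalence between the two existence sentences holds in every universe $\Universe^*$, and hence the logical undecidability of the Wang tiling problem transfers to \eqref{boolean}. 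I do not anticipate any genuine obstacle here: the tile-to-bits reduction is the obvious one, and the only substantive input is Berger's theorem together with its bridging to logical undecidability of a specific instance via Remark \ref{algo-logic}.
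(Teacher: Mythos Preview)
Your proposal is correct and follows essentially the same approach as the paper: encode an undecidable $\Z^2$ tiling problem (sourced from Berger) as a local Boolean constraint by bit-encoding the tile labels and expressing the local compatibility conditions as membership in a finite set $\Omega$. The only cosmetic difference is that you work directly with Wang tiles and their nearest-neighbor matching rules (yielding the explicit values $L=3$, $h_1=(0,0)$, $h_2=(1,0)$, $h_3=(0,1)$), whereas the paper first passes through an undecidable translational tiling $\Tile(F_1,\dots,F_J;\Z^2)$ via Golomb's reduction and then recasts it as a coloring constraint with shifts given by the difference set $\{h_j-h'_j:h_j,h'_j\in F_j\}$; your route is slightly more direct.
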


Finally, in Section \ref{tiling} we use the previously established existence of undecidable translational tile sets to prove Theorem \ref{main-boolean}, and thus Theorem \ref{main}.

\subsection{Acknowledgments}

RG was partially supported by the Eric and Wendy
Schmidt Postdoctoral Award. TT was partially supported by NSF grant DMS-1764034 and by a Simons Investigator Award. We gratefully acknowledge the hospitality and support of the Hausdorff Institute for Mathematics where a significant portion of this research was conducted. 

We thank David Roberts for drawing our attention to  the reference \cite{socolar-taylor}, Hunter Spink for drawing our attention to the reference \cite{glt}, Jarkko Kari for drawing our attention to the references \cite{k92,kp,Lukk}, and Zachary Hunter and Matthew Foreman for further corrections.  We are also grateful to the anonymous referee for several suggestions that improved the exposition of this paper.

\subsection{Notation}\label{notation-sec}

Given a subset $A \subset G$ of an abelian group $G$ and a shift $h \in G$, we define $A+h = h+A\coloneqq \{ a+h: a \in A \}$, $A-h \coloneqq \{ a-h: a \in A \}$, and $-A \coloneqq \{-a: a \in A \}$.

The unary operator $-$ is understood to take precedence over the binary operator $\times$, which in turn takes precedence over the binary operator $\oplus$, which takes precedence over the binary operator $\uplus$.  Thus for instance
$$ A \times -B \oplus -C \times D \uplus E = ((A \times (-B)) \oplus ((-C) \times D)) \uplus E.$$

By slight abuse of notation, any set of integers will be identified with the corresponding set of residue classes in a cyclic group $\Z_N$, if these classes are distinct.  For instance, if $M \leq N$, we identify $\{1,\dots,M\}$ with the residue classes $\{1 \mod N, \dots, M \mod N \} \subset \Z_N$, and if $N>2$, we identify $\{-1,1\}$ with $\{-1 \mod N, 1 \mod N \} \subset \Z_N$.

\section{Periodic tiling conjecture in one dimension}\label{one-tile}

In this section we adapt the pigeonholing argument of Newman \cite{N} to establish

\begin{theorem}[One-dimensional case of periodic tiling conjecture]\label{oned}  Let $G = \Z \times G_0$ for a some explicit finite abelian group $G_0$, let $J \geq 1$ be a standard integer, let $F_1,\dots,F_J$ be finite subsets of $G$, and let $E$ be a periodic subset of $G$.  Then the tiling equation $\Tile(F_1,\dots,F_J; E)$ is not aperiodic (and hence also decidable).
\end{theorem}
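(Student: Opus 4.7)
The plan is to adapt Newman's pigeonhole argument by modeling any bi-infinite solution as a walk in a finite directed graph, and then extracting a periodic orbit. After translating each tile if necessary, I may assume that all $F_j \subset [0,D] \times G_0$ for a common $D \geq 0$. Since $E$ is periodic, there exists an integer $T \geq 1$ with $E + (T,0) = E$; by replacing $T$ with a multiple I may also arrange $T \geq D$. For a candidate tuple $A = (A_1,\dots,A_J)$ and $n \in \Z$, define the state
$$ S_n(A) \coloneqq (A_j \cap ([n-D, n) \times G_0))_{j=1}^J, $$
which, after translation by $(-n,0)$, takes values in the finite set $\cS \coloneqq \prod_{j=1}^J \cP([-D,0) \times G_0)$. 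Augmenting with the residue $n \bmod T$ yields a pair $(S_n(A), n \bmod T)$ in the finite set $\cS \times \Z_T$.

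The key observation is locality. Because $F_j \subset [0,D] \times G_0$, the tiling equation at a point $(n,g) \in G$ only constrains the $A_j$'s at first coordinates in $[n-D,n]$, hence depends only on $S_n(A)$ together with the layer $L_n(A) \coloneqq (A_j \cap (\{n\} \times G_0))_j$; moreover $S_{n+1}(A)$ is determined by $S_n(A)$ together with $L_n(A)$ via a one-step shift. Consequently any solution $A$ traces a bi-infinite walk $n \mapsto (S_n(A), n \bmod T)$ in a fixed finite directed graph, whose edges encode layer choices compatible with the local constraint at the residue $n \bmod T$ of $E$.

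Suppose a solution $A$ exists. By pigeonhole on the finite set $\cS \times \Z_T$, there exist $n_1 < n_2$ with $(S_{n_1}(A), n_1 \bmod T) = (S_{n_2}(A), n_2 \bmod T)$. Setting $P \coloneqq n_2 - n_1$, we have $T \mid P$ and $P \geq T \geq D$. I then form the $(P,0)$-periodic candidate
$$ A'_j \coloneqq \bigl(A_j \cap ([n_1, n_2) \times G_0)\bigr) \oplus (P\Z \times \{0\}), \qquad j = 1, \dots, J; $$
the $\oplus$ is valid since distinct choices of representative in $[n_1,n_2)$ and multiplier $k \in \Z$ give distinct sums.

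It remains to verify that $A' = (A'_1,\dots,A'_J)$ satisfies $\biguplus_j A'_j \oplus F_j = E$. Both sides are $(P,0)$-invariant (for $E$ because $T \mid P$), so it suffices to check the equation at points $(n,g)$ with $n \in [n_1, n_2)$. The pigeonhole equality $S_{n_1}(A) = S_{n_2}(A)$ (after translation), combined with $P \geq D$, implies that $A'_j$ agrees with $A_j$ on the enlarged window $[n_1 - D, n_2) \times G_0$; since the tiling equation at $(n,g)$ with $n \in [n_1,n_2)$ involves only first coordinates in $[n-D, n] \subset [n_1 - D, n_2)$, it reduces to the corresponding equation for $A$ and is therefore satisfied. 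This produces a periodic solution whenever any solution exists, so $\Tile(F_1,\dots,F_J;E)$ is not aperiodic; decidability then follows from the Wang-type argument reviewed in Appendix \ref{wang-app}. The main obstacle, consistency at the ``seam'' between consecutive periods, is precisely what the hypothesis $T \geq D$ together with the state equality $S_{n_1}(A) = S_{n_2}(A)$ are designed to resolve.
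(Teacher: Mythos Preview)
Your proof is correct and follows essentially the same pigeonhole strategy as the paper: record a finite ``color'' (your state $S_n(A)$ together with $n \bmod T$; the paper's $((A_j-(n,0))\cap[[-L,L]])_j$ restricted to $n\in r\Z$), find a repeat, and periodize the block between repeats. The only differences are cosmetic---you normalize the tiles to lie in $[0,D]\times G_0$ and carry the residue $n\bmod T$ explicitly, whereas the paper keeps the tiles in $[[-L,L]]$ and restricts the pigeonhole to $n\in r\Z$---and your graph-walk language is a pleasant rephrasing of the same locality observation the paper uses directly.
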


We remark that the same argument also applies to systems of tiling equations in one-dimensional groups $\Z \times G_0$; this also follows from the above theorem and Theorem \ref{combine}.

\begin{proof}  Suppose one has a solution $(A_1,\dots,A_J) \in \Tile(F_1,\dots,F_J; E)_\Universe$ to the tiling equation $\Tile(F_1,\dots,F_J; E)$.
To establish the theorem it will suffice to construct a periodic solution $A'_1,\dots,A'_J  \in \Tile(F_1,\dots,F_J; E)_\Universe$ to the same equation.

We abbreviate the ``thickened interval'' $\{ n \in \Z: a \leq n \leq b\} \times G_0$ as $[[a,b]]$ for any integers $a \leq b$.
Since the $F_1,\dots,F_J$ are finite, there exists a natural number $L$ such that $F_1,\dots,F_J \subset [[-L,L]]$.  Since $E$ is periodic, there exists a natural number $r$ such that $E + (n,0) = E$ for all $n \in r\Z$, where we view $(n,0)$ as an element of $\Z \times G_0$.  We can assign each $n \in r\Z$ a ``color'', defined as the tuple
$$ \left( (A_j - (n,0)) \cap [[-L,L]] \right)_{j=1}^J.$$
This is a tuple of $J$ subsets of the finite set $[[-L,L]]$, and thus there are only finitely many possible colors.  By the pigeonhole principle, one can thus find a pair of integers $n_0, n_0+D \in r\Z$ with $D>L$ that have the same color, thus
$$ (A_j - (n_0+D,0)) \cap [[-L,L]] = (A_j - (n_0,0) ) \cap [[-L,L]]$$
or equivalently
\begin{equation}\label{aj-agree}
 A_j \cap [[n_0+D-L,n_0+D+L]] = \left(  A_j \cap [[n_0-L,n_0+L]] \right) + (D,0)
 \end{equation}
 for $j=1,\dots,J$.  

We now define the sets $A'_j$ for $j=1,\dots,J$ by taking the portion $A_j \cap [[n_0,n_0+D-1]]$ of $A_j$ and extending periodically by $D\Z \times \{0\}$, thus
$$ A'_j \coloneqq ( A_j \cap [[n_0,n_0+D-1]]) \oplus D\Z \times \{0\}.$$
Clearly we have the agreement
$$ A'_j \cap [[n_0, n_0+D-1]] = A_j \cap [[n_0, n_0 + D-1]]$$
of $A_j, A'_j$ on $[[n_0,n_0+D-1]]$, but from \eqref{aj-agree} we also have
\begin{align*}
A'_j \cap [[n_0-L,n_0-1]] &= (A_j \cap [[n_0+D-L, n_0+D-1]]) - (D,0) \\
&=A_j \cap [[n_0-L, n_0-1]]
\end{align*}
and similarly
\begin{align*}
A'_j \cap [[n_0+D, n_0+D+L]] &= (A_j \cap [[n_0, n_0+L]]) + (D,0) \\
&= A_j \cap [[n_0+D, n_0+D+L]]
\end{align*}
and thus $A_j, A'_j$ in fact agree on a larger region:
\begin{equation}\label{aj-ajp-agree}
A'_j \cap [[n_0-L, n_0+D+L]] = A_j \cap [[n_0-L, n_0+D+L]].
\end{equation}

It will now suffice to show that  $A'_1, \dots, A'_J$ solve the tiling equation 
$$\Tile(F_1,\dots,F_J; E),$$
that is to say that
$$ A'_1 \oplus F_1 \uplus \dots \uplus A'_J \oplus F_J = E.$$
Since both sides of this equation are periodic with respect to translations by $D\Z \times \{0\}$, it suffices to establish this claim within $[[n_0,n_0+D-1]]$, that is to say
\begin{equation}\label{apfe}
\biguplus_{j=1}^J \left((A'_j \oplus F_j) \cap [[n_0,n_0+D-1]]\right)
= E \cap [[n_0,n_0+D-1]].
\end{equation}
However, since $F_1,\dots,F_J$ are contained in $[[-L,L]]$, so the only portions of $A'_1,\dots,A'_J$ that are relevant for \eqref{apfe} are those in $[[n_0-L,n_0+D+L-1]]$.  But from \eqref{aj-ajp-agree} we may replace each $A'_j$ in \eqref{apfe} by $A_j$.  Since $A_1,\dots,A_J$ solve the tiling equation $\Tile(F_1,\dots,F_J; E)$, the claim follows.
\end{proof}

\begin{remark} An inspection of the argument reveals that the hypothesis that $G_0$ was abelian was not used anywhere in the proof, thus Theorem \ref{oned} is also valid for nonabelian $G_0$ (with suitable extensions to the notation).  This generalization will be used in Section \ref{nonab-sec}.
\end{remark}

\section{Combining multiple tiling equations into a single equation}\label{combine-sec}

In this section we establish Theorem \ref{combine}. For the rest of the section we use the notation and hypotheses of that theorem. 

\begin{remark} The reader may wish to first consider the special case $M=2, J=1, N=3$ in what follows to simplify the notation.  In this case, part (ii) of the theorem asserts that the system of tiling equations
$$ \Tile(F^{(1)}, \Z^d \times E_0^{(1)}),  \Tile(F^{(2)}, \Z^d \times E_0^{(2)})$$
in $\Z^d \times G_0$ is undecidable if and only if the single tiling equation
$$ \Tile( F^{(1)} \times \{1\} \uplus F^{(2)} \times \{2\}, \Z^d \times (E_0^{(1)} \times \{1\} \uplus E_0^{(2)} \times \{2\}))$$
in $\Z^d \times G_0 \times \Z_3$ is undecidable.
\end{remark}

We begin with part (ii).  Suppose we have a solution 
$$(A_1,\dots,A_J) \in \bigcap_{m=1}^M \Tile(F_1^{(m)},\dots,F_J^{(m)}; \Z^d \times E_0^{(m)})_\Universe$$
in $G$ to the system of tiling equations $\Tile(F_1^{(m)},\dots,F_J^{(m)}; \Z^d \times E_0^{(m)}), m=1,\dots,M$, thus
\begin{equation}\label{afm}
A_1 \oplus F_1^{(m)} \uplus \dots \uplus A_J \oplus F_J^{(m)} = \Z^d \times E_0^{(m)}
\end{equation}
for all $m=1,\dots,M$.  If we then define the sets
$$ \tilde A_j \coloneqq A_j \times \{0\} \subset G \times \Z_N$$
for $j=1,\dots,J$, then from construction of the $\tilde F_j$ we have
$$ \tilde A_j \oplus \tilde F_j = \biguplus_{m=1}^M (A_j \oplus F_j^{(m)}) \times \{m\}$$
for any $j=1,\dots,J$ and $m=1,\dots,M$, and hence by \eqref{afm}
$$ \tilde A_1 \oplus \tilde F_1 \uplus \dots \uplus \tilde A_J \oplus \tilde F_J^{(m)} = \biguplus_{m=1}^M (\Z^d \times E_0^{(m)}) \times \{m\}.$$
But by \eqref{e0-stack}, the right-hand side here is $\Z^d \times \tilde E_0$.  Thus we see that $\tilde A_1,\dots,\tilde A_J$ solve the single tiling equation 
$\Tile(\tilde F_1,\dots,\tilde F_J; \Z^d \times \tilde E_0)$.

Conversely, suppose that we have a solution
$$(\tilde A_1,\dots,\tilde A_J) \in \Tile(\tilde F_1,\dots,\tilde F_J; \Z^d \times \tilde E_0)_\Universe$$
in $G \times \Z_N$ to the tiling equation
$\Tile(\tilde F_1,\dots,\tilde F_J; \Z^d \times \tilde E_0)$; thus
\begin{equation}\label{ta1}
 \tilde A_1 \oplus \tilde F_1 \uplus \dots \uplus \tilde A_J \oplus \tilde F_J^{(m)} = \Z^d \times \tilde E_0.
 \end{equation}
 We claim that $\tilde A_j \subset G \times \{0\}$ for all $j=1,\dots,J$.  For if this were not the case, then there would exist $j=1,\dots,J$ and an element $(g,n)$ of $\tilde A_j$ with $n \in \Z_N \backslash \{0\}$.  On the other hand, for any $1 \leq m \leq M$, the set $F_j^{(m)}$ is non-empty, hence $\tilde F_j$ contains an element of the form $(f_m,m)$ for some $f_m \in G$.  By \eqref{ta1}, we then have $(g+f_m, n+m) \in \Z^d \times \tilde E_0$, hence by construction of $\tilde E_0$ we have
$$ n + m \in \{1,\dots,M\}$$
for all $m=1,\dots,M$, or equivalently
$$ n + \{1,\dots,M\} \subset \{1,\dots,M\}.$$
But since $N > M$, this is inconsistent with $n$ being a non-zero element of $\Z_N$.  Thus we have $\tilde A_j \subset G \times \{0\}$ as desired, and we may write
$$ \tilde A_j = A_j \times \{0\}$$
for some $A_j \subset G$.  By considering the intersection (or ``slice'') of \eqref{ta1} with $G \times \{m\}$, we see that
$$
A_1 \oplus F^{(m)}_1 \uplus \dots \uplus A_J \oplus F^{(m)}_J = \Z^d \times E^{(m)}_0
$$
for all $m=1,\dots,M$, that is to say $A_1,\dots,A_J$ solves the system
of tiling equations $\Tile(F_1^{(m)},\dots,F_J^{(m)}; \Z^d \times E_0^{(m)}), m=1,\dots,M$.  We have thus demonstrated that the equation $\Tile(\tilde F_1,\dots,\tilde F_J; \Z^d \times \tilde E_0)$ admits a solution if and only if the system $\Tile(F_1^{(m)},\dots,F_J^{(m)}; \Z^d \times E_0^{(m)}), m=1,\dots,M$ does.  This argument is also valid in any other universe $\Universe^*$ of ZFC, which gives (ii).  An inspection of the argument also reveals that the equation $\Tile(\tilde F_1,\dots,\tilde F_J; \Z^d \times \tilde E_0)$ admits a periodic solution if and only if the system $\Tile(F_1^{(m)},\dots,F_J^{(m)}; \Z^d \times E_0^{(m)}), m=1,\dots,M$ does, which gives (i).

As noted in the introduction, in view of Theorem \ref{combine} we see that to prove Theorem \ref{main} it suffices to prove Theorem \ref{main-red}.  This is the objective of the next five sections of the paper.

\begin{remark}\label{nonab} For future reference we remark that the abelian nature of $G_0$ was not used in the above argument, thus Theorem \ref{combine} is also valid for nonabelian $G_0$ (with suitable extensions to the notation).
\end{remark}

\section{From tiling to functions}\label{function-sec}

In this section we reduce Theorem \ref{main-red} to Theorem \ref{main-function}, by means of the following general proposition.

\begin{proposition}[Equivalence of tiling equations and functional equations]\label{tile-function}  Let $G$ be an explicit finitely generated abelian group, let $G_1$ be an explicit finite abelian group, let $J, M \geq 1$ and $N > J$ be standard natural numbers, and suppose that for each $j=1,\dots,J$ and $m=1,\dots,M$ one is given a (possibly empty) finite subset $H_j^{(m)}$ of $G$ and a (possibly empty) subset $F_j^{(m)}$ of $G_1$.  For each $m=1,\dots,M$, assume also that we are given a subset $E^{(m)}$ of $G_1$.  We adopt the abbreviations
\begin{align*}
    [[a]] &\coloneqq \{a\} \times G_1 \subset \Z_N \times G_1 \\
    [[a,b]] &\coloneqq \{ n \in \Z: a\leq n \leq b\} \times G_1\subset \Z_N\times G_1
\end{align*} 
for integers $a \leq b$.  Then the following are equivalent:
\begin{itemize}
\item[(i)]  The system of tiling equations
\begin{equation}\label{tile-1}
\Tile\left( \left( -H_j^{(m)} \times \{0\} \times F_j^{(m)} \uplus \{0\} \times [[j]])\right)_{j=1}^J; G \times (\{0\} \times E^{(m)} \uplus [[1,J]]) \right)
\end{equation}
for all $m=1,\dots,M$, together with the tiling equations
\begin{equation}\label{tile-2}
\Tile\left( \left( \{0\} \times [[\sigma(j)]] \right)_{j=1}^J; G \times [[1,J]] \right)
\end{equation}
for every permutation $\sigma \colon \{1,\dots,J\} \to \{1,\dots,J\}$ of $\{1,\dots,J\}$, admit a solution.
\item[(ii)]  There exist $f_j \colon G \to G_1$ for $j=1,\dots,J$ that obey the system of functional equations
\begin{equation}\label{func-eq-stack}
\biguplus_{j=1}^J \biguplus_{h_j \in H_j^{(m)}} (F_j^{(m)} + f_j(n+h_j)) = E^{(m)} 
\end{equation}
for all $n \in G$ and $m=1,\dots,M$.
\end{itemize}
\end{proposition}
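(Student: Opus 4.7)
The plan is to use the family of tiling equations \eqref{tile-2}, indexed over all permutations $\sigma$ of $\{1,\dots,J\}$, as a rigidity device that forces each $\X_j$ to coincide with the graph of a function $f_j \colon G \to G_0$ sitting in the slice $G \times \{0\} \times G_0$. Once this normal form is in place, restricting the tiling equations \eqref{tile-1} slice by slice in the $\Z_N$-coordinate will translate them directly into the functional equations \eqref{func-eq-stack}.

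For the direction (ii)$\Rightarrow$(i), I would set $\X_j \coloneqq \{(g,0,f_j(g)) : g \in G\}$. Each equation in \eqref{tile-2} then holds because $\X_j \oplus (\{0\} \times [[\sigma(j)]]) = G \times [[\sigma(j)]]$, and the slices $G \times [[\sigma(j)]]$ partition $G \times [[1,J]]$ as $\sigma$ permutes $\{1,\dots,J\}$. For each equation in \eqref{tile-1}, I would split the tile into $-H_j^{(m)} \times \{0\} \times F_j^{(m)}$ and $-H_j^{(m)} \times [[j]]$: after reindexing $n = g - h$, the former contributes $\biguplus_j \biguplus_{h \in H_j^{(m)}} (F_j^{(m)} + f_j(n+h))$ to the $[[0]]$-slice, which equals $G \times \{0\} \times E^{(m)}$ by \eqref{func-eq-stack}, while the latter fills $G \times [[j]]$.

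For the direction (i)$\Rightarrow$(ii), well-definedness of $\X_j \oplus (\{0\} \times [[\sigma(j)]])$ forces $\X_j$ to be a partial graph $\{(g,n,f_j(g,n)) : (g,n) \in S_j\}$ over some $S_j \subset G \times \Z_N$, since two distinct $G_0$-values of $\X_j$ above the same $(g,n)$ would collide after shifting by $\{0\} \times \{\sigma(j)\} \times G_0$. Projecting \eqref{tile-2} onto the first two coordinates yields, for every permutation $\sigma$, the partition identity
\begin{equation}\label{partition-plan}
\biguplus_{j=1}^J \bigl( S_j + (0,\sigma(j)) \bigr) = G \times \{1,\dots,J\}.
\end{equation}
Choosing any $b \in \Z_N \setminus \{1,\dots,J\}$ (which exists since $N > J$), each summand in \eqref{partition-plan} restricted to $G \times \{b\}$ must be empty, so $S_j \cap (G \times \{b - \sigma(j)\}) = \emptyset$ for every $j$; as $\sigma(j)$ ranges over all of $\{1,\dots,J\}$ this forces $S_j \cap (G \times \{c\}) = \emptyset$ whenever $c + \{1,\dots,J\} \not\subset \{1,\dots,J\}$ in $\Z_N$. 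The key elementary observation is that the consecutive block $\{1,\dots,J\}$ has trivial translational stabilizer in $\Z_N$ for any $N > J$: a nonzero $c \in \{1,\dots,N-1\}$ either admits $j = N - c \in \{1,\dots,J\}$ with $c + j \equiv 0 \not\in \{1,\dots,J\}$, or else $c + J > J$ without wrapping. Hence only $c = 0$ survives and $S_j \subset G \times \{0\}$; feeding this back into \eqref{partition-plan} with $\sigma$ chosen to route each $j$ into each slot of $\{1,\dots,J\}$ upgrades this to $S_j = G \times \{0\}$ for every $j$. Thus $\X_j = \{(g,0,f_j(g)) : g \in G\}$ for some $f_j \colon G \to G_0$, and restricting \eqref{tile-1} to the slice $G \times [[0]]$ and reindexing $n = g - h$ recovers exactly \eqref{func-eq-stack}.

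The main obstacle is the rigidity step above --- combining the permutation partitions \eqref{partition-plan} to peel off any nonzero $\Z_N$-component from $S_j$ --- which rests on the elementary fact about stabilizers of blocks in $\Z_N$ and the freedom to permute $j \mapsto \sigma(j)$ arbitrarily. Once the normal form $\X_j = \{(g,0,f_j(g))\}$ is secured, the remainder of the argument is routine slice-by-slice bookkeeping.
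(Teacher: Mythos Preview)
Your proposal is correct and follows essentially the same approach as the paper's proof: both directions use the graph sets $\{(g,0,f_j(g)):g\in G\}$, and the rigidity step in (i)$\Rightarrow$(ii) is the same ``trivial stabilizer of $\{1,\dots,J\}$ in $\Z_N$'' argument you isolate. The only organizational difference is that the paper argues directly that any $(g,n,g_0)\in A_j$ with $n\neq 0$ forces $n+\{1,\dots,J\}\subset\{1,\dots,J\}$ and hence $n=0$, whereas you first pass to the partial-graph/projection description $S_j$ and then run the same block-stabilizer argument there; the content is identical.
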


\begin{remark} The reason why we work with $\{0\} \times F_j^{(m)} \uplus [[j]]$ instead of just $\{0\} \times F_j^{(m)}$ in \eqref{tile-1} is in order to ensure that one is working with a non-empty tile (as is required in Theorem \ref{main-red}), even when the original tile $F_j^{(m)}$ is empty.
\end{remark}

\begin{remark}
The reader may wish to first consider the special case $M=J=1$, $N=2$ in what follows to simplify the notation.  In this case, the theorem asserts that for any finite $H \subset G$, and $F, E \subset G_1$, the system of tiling equations
\begin{align*}
    A \oplus \left(  (-H \times \{0\} \times F) \uplus \{0\} \times \{1\} \times G_1)\right) &= G \times (\{0\} \times E \uplus \{1\} \times G_1) \\
A \oplus \{0\} \times \{1\} \times G_1 &= G \times \{1\} \times G_1
\end{align*} 
admits a solution $A \subset G \times \Z_2 \times G_1$ if and only if there is a function $f \colon G \to G_1$ obeying the equation
$$ \biguplus_{h \in H} (F + f(n+h)) = E$$
for all $n \in G$.  The relationship between the set $A$ and the function $f$ will be given by the graphing relation
$$ A = \{ (n,0,f(n)): n \in G\}.$$
\end{remark}

\begin{proof}  Let us first show that (ii) implies (i).  If $f_1,\dots,f_J$ obey the system \eqref{func-eq-stack}, we define the sets $A_1,\dots,A_J \subset G \times \Z_N \times G_1$ to be the graphs of $f_1,\dots,f_J$ in the sense that
\begin{equation}\label{aj-form}
 A_j \coloneqq \{ (n, 0, f_j(n)): n \in G \}.
\end{equation}

For any $j=1,\dots,J$ and permutation $\sigma \colon \{1,\dots,J\} \to \{1,\dots,J\}$, we have
\begin{equation}\label{j0}
A_j \oplus \{0\} \times [[\sigma(j)]] = G \times [[\sigma(j)]]
\end{equation}
which gives the tiling equation \eqref{tile-2} for any permutation $\sigma$.  Next, for $j=1,\dots,J$ and $m=1,\dots,M$, we have
\begin{equation}\label{aj-sum}
A_j \oplus -H_j^{(m)} \times \{0\} \times F_j^{(m)} = \biguplus_{n \in G} \{n\} \times \biguplus_{h_j \in H_j^{(m)}} \{0\} \times (F_j^{(m)} + f_j(n+h_j))
\end{equation}
and (as a special case of \eqref{j0})
$$A_j \oplus \{0\} \times [[j]] = G \times [[j]] 
$$
so that the tiling equation \eqref{tile-1} then follows from \eqref{func-eq-stack}.  This shows that (ii) implies (i).

Now assume conversely that (i) holds, thus we have sets $A_1,\dots,A_J \subset G \times \Z_N \times G_1$ obeying the system of tiling equations 
\begin{equation}\label{tile-1-alt}
\biguplus_{j=1}^J A_j \oplus \left( -H_j^{(m)} \times \{0\} \times F_j^{(m)} \uplus \{0\} \times [[j]])\right) = G \times (\{0\} \times E^{(m)} \uplus [[1,J]])
\end{equation}
for all $m=1,\dots,M$, and
\begin{equation}\label{tile-2-alt}
\biguplus_{j=1}^J A_j \oplus \{0\} \times [[\sigma(j)]]
= G \times [[1,J]] 
\end{equation}
for all permutations $\sigma \colon \{1,\dots,J\} \to \{1,\dots,J\}$.

We first adapt an argument from Section \ref{combine-sec} to claim that each $A_j$ is contained in $G \times [[0]]$.  For if this were not the case, there would exist $j=1,\dots,J$ and an element $(g,n,g_0)$ of $A_j$ with $n \in \Z_N \backslash \{0\}$.  The left-hand side of the tiling equation \eqref{tile-2-alt} would then contain $(g,n+\sigma(j),g_0)$, and thus we would have
$$ n + \sigma(j) \in \{1,\dots,J\}$$
for all permutations $\sigma$, thus
$$ n + \{1,\dots,J\} \subset \{1,\dots,J\}.$$
But this is inconsistent with $n$ being a non-zero element of $\Z_N$.  Thus each $A_j$ is contained in $G \times [[0]]$ as claimed. 

If one considers the intersection (or ``slice'') of \eqref{tile-2-alt} with $G \times [[\sigma(j)]]$, we conclude that
$$ A_j \oplus \{0\} \times [[\sigma(j)]] = G \times [[ \sigma(j) ]]$$
for any $j=1,\dots,J$ and permutation $\sigma$.  This implies that for each $n \in G$ there is a unique $f_j(n) \in G_1$ such that $(n,0,f_j(n)) \in A_j$, thus the $A_j$ are of the form \eqref{aj-form} for some functions $f_j$.  The identity \eqref{aj-sum} then holds, and so from inspecting the $G \times [[0]]$ ``slice'' of \eqref{tile-1-alt} we obtain the equation \eqref{func-eq-stack}.  This shows that (ii) implies (i).
\end{proof}

The proof of Proposition \ref{tile-function} is valid in every universe $\Universe^*$ of ZFC, thus the solvability question in Proposition \ref{tile-function}(i) is decidable if and only if the solvability question in Proposition \ref{tile-function}(ii) is.  Applying this fact for $J=2$, we see that  Theorem \ref{main-function} implies Theorem \ref{main-red}.

It now remains to establish Theorem \ref{main-function}.  This is the objective of the next four sections of the paper.

\section{Reduction to the Hamming cube}\label{hamming-sec}

In this section we show how Theorem \ref{main-hamming} implies Theorem \ref{main-function}. Let $N$, $D$, $M$, $h_1^{(m)}$, $h_2^{(m)}$, $F_1^{(m)}$, $F_2^{(m)}$, $E^{(m)}$ be as in Theorem \ref{main-hamming}.  For each $d=1,\dots,D$, let $\pi_d \colon \Z_N^D \to \Z_N$ denote the $d^{\mathrm{th}}$ coordinate projection, thus
\begin{equation}\label{coord}
y = (\pi_1(y),\dots,\pi_D(y))
\end{equation}
for all $y \in \Z_N^D$.

We write elements of $\Z^2 \times \Z_2$ as $(n,t)$ with $n \in \Z^2$ and $t \in \Z_2$.  For a pair of functions $\tilde f_1,\tilde f_2 \colon \Z^2 \times \Z_2 \to \Z_N^D$, consider the system of functional equations
\begin{equation}\label{sys-1}
\left(\pi_d^{-1}(\{0\}) + \tilde f_j(n,t)\right) \uplus 
\left(\pi_d^{-1}(\{0\}) + \tilde f_j(n,t+1)\right) = \pi_d^{-1}(\{-1,1\}) 
\end{equation}
for $(n,t) \in \Z^2 \times \Z_2$, $d=1,\dots,D$ and $j=1,2$, as well as the equations
\begin{equation}\label{sys-2}
\left(F_1^{(m)} + \tilde f_1((n,t)+(h_1^{(m)},0))\right) \uplus
\left(F_2^{(m)} + \tilde f_2((n,t)+(h_2^{(m)},0))\right) = E^{(m)}
\end{equation}
for $(n,t) \in \Z^2 \times \Z_2$ and $m=1,\dots,M$.  Note that this system is of the form \eqref{func-eq} (with $f_j$ replaced by $\tilde f_j$, and for suitable choices of $M$, $F_1^{(m)}, F_2^{(m)}$, $E^{(m)}$).  It will therefore suffice to establish (using an argument formalizable in ZFC) the equivalence of the following two claims:

\begin{itemize}
\item[(i)]  There exist functions $\tilde f_1, \tilde f_2 \colon \Z^2 \times \Z_2 \to \Z_N^D$ solving the system \eqref{sys-1}, \eqref{sys-2}.
\item[(ii)] There exist $f_1,f_2 \colon \Z^2 \to \{-1,1\}^D$ solving the system \eqref{func-eq-1}.
 \end{itemize}
 
 \begin{remark} As a simplified version of this equivalence, the reader may wish to take $M=1$, $D=2$, and only work with a single function $f$ (or $\tilde f$) instead of a pair $f_1,f_2$ (or $\tilde f_1,\tilde f_2$) of functions.  The claim is then that the following two statements are equivalent for any $F,E \subset \Z_N^2$:
 \begin{itemize}
     \item [(i')]  There exists $\tilde f \colon \Z^2 \times \Z^2 \to \Z_N^2$ obeying the equations:
         \[(\{0\} \times \Z_N + \tilde f(n,t)) \uplus (\{0\} \times \Z_N + \tilde f(n,t+1)) = \{-1,1\} \times \Z_N,\]
         \[(\Z_N \times \{0\} + \tilde f(n,t)) \uplus (\Z_N \times \{0\} + \tilde f(n,t+1)) = \Z_N \times \{-1,1\},\] and \[F + \tilde f(n,t) = E \] for all $(n,t) \in \Z^2 \times \Z_2$.
     \item[(ii')]  There exists $f \colon \Z^2 \to \{-1,1\}^2$ obeying the equation $F + \eps f(n) = E$ for all $n \in \Z^2$ and $\eps = \pm 1$.
 \end{itemize}
 The relation between (i') and (ii') shall basically arise from the ansatz $\tilde f(n,t) = (-1)^t f(n)$.
 \end{remark}
 
We first show that (ii) implies (i).
Given solutions $f_1,f_2$ to the system \eqref{func-eq-1}, we define the functions $\tilde f_1, \tilde f_2 \colon \Z^2 \times \Z_2 \to \Z_N^D$ by the formula
$$ \tilde f_j(n,t) \coloneqq (-1)^t f_j(n)$$
for $j=1,2$, $n \in \Z^2$, and $t \in \Z_2$, with the conventions $(-1)^0 \coloneqq 1$ and $(-1)^1 \coloneqq -1$.  The equations \eqref{func-eq-1} then imply \eqref{sys-2}, while the fact that the $f_j$ takes values in $\{-1,1\}^D$ implies \eqref{sys-1} (the key point here is that $\{-1,1\} = \{x\} \uplus \{-x\}$ if and only if $x \in \{-1,1\}$).  This proves that (ii) implies (i).

Now we prove (i) implies (ii).  Let $\tilde f_1, \tilde f_2 \colon \Z^2 \times \Z_2 \to \Z_N^D$ be solutions to \eqref{sys-1}, \eqref{sys-2}.  From \eqref{sys-1} we see (on applying the projection $\pi_d$) that
$$ \{ \pi_d(\tilde f_j(n,t)) \} \uplus
\{ \pi_d(\tilde f_j(n,t+1)) \} = \{-1,1\}$$
for all $j=1,2$, $d=1,\dots,D$, and $(n,t) \in \Z^2 \times \Z_2$, or equivalently that
$$ \pi_d(\tilde f_j(n,t)) \in \{-1,1\}$$
and
$$ \pi_d(\tilde f_j(n,t+1)) = - \pi_d(\tilde f_j(n,t))$$
for all $j=1,2$, $d=1,\dots,D$, and $(n,t) \in \Z^2 \times \Z_2$. 
From \eqref{coord}, we thus have
$$ \tilde f_j(n,t) \in \{-1,1\}^D$$
and
$$ \tilde f_j(n,t+1) = - \tilde f_j(n,t)$$
for all $j=1,2$ and $(n,t) \in \Z^2 \times \Z_2$.  Thus we may write
$$ \tilde f_j(n,t) = (-1)^t f_j(n)$$
for some functions $f_j \colon \Z^2 \to \{-1,1\}^D$.  The system \eqref{sys-2} is then equivalent to the system of equations \eqref{func-eq-1}.  This shows that (i) implies (ii).  These arguments are valid in every universe $\Universe^*$ of ZFC, thus Theorem \ref{main-hamming} implies Theorem \ref{main-function}.

It now remains to establish Theorem \ref{main-hamming}.  This is the objective of the next three sections of the paper.

\section{Reduction to systems of linear equations on boolean functions}\label{linear-sec}

In this section we show how Theorem \ref{main-linear} implies Theorem \ref{main-hamming}. Let $D$, $D_0$, $M_1$, $M_2$, $a_{j,d}^{(m)}$, $h_d$ be as in Theorem \ref{main-linear}.  We let $N$ be a sufficiently large integer.  For each $j=1,2$ and $m = 1,\dots,M_j$, we consider the subgroup $H_j^{(m)}$ of $\Z_N^D$ defined by
\begin{equation}\label{hjm-def}
H_j^{(m)} \coloneqq \left\{ (y_1,\dots,y_D) \in \Z_N^D \colon \sum_{d=1}^D a_{j,d}^{(m)} y_j = 0 \right\}
\end{equation}
and let $\pi_d \colon \Z_N^D \to \Z_N$ for $d=1,\dots,D$ be the coordinate projections as in the previous section.
For some unknown functions $f_1,f_2 \colon \Z^2 \to \{-1,1\}^D \subset \Z_N^D$ we consider the system of functional equations
\begin{equation}\label{system-1}
H_j^{(m)} + \epsilon f_j(n) = H_j^{(m)}
\end{equation}
for all $n \in \Z^2$, $j=1,2$, $m=1,\dots,M_j$, and $\epsilon = \pm 1$, as well as the system
\begin{equation}\label{system-2}
(\pi_d^{-1}(\{0\}) + \epsilon f_1(n)) \uplus (\pi_d^{-1}(\{0\}) + \epsilon f_2(n+h_d)) = \pi_d^{-1}(\{-1,1\})
\end{equation}
for all $n \in \Z^2$, $d=1,\dots,D_0$ and $\epsilon = \pm 1$.  Note that this system \eqref{system-1}, \eqref{system-2} is of the form required for Theorem \ref{main-hamming}.  It will suffice to establish (using an argument valid in every universe of ZFC) the equivalence of the following two claims:

\begin{itemize}
\item[(i)]  There exist functions $f_1, f_2 \colon \Z^2 \to \Z_N^D$ solving the system \eqref{system-1}, \eqref{system-2}.
\item[(ii)]  There exist functions $f_{j,d} \colon \Z^2 \to \{-1,1\}$ solving the system \eqref{func-eq-2}, \eqref{func-eq-3}.
\end{itemize}

\begin{remark} To understand this equivalence, the reader may wish to begin by verifying two simplified special cases of this equivalence.  Firstly, the two (trivially true) statements
\begin{itemize}
    \item[(i')]  There exist a function $f \colon \Z^2 \to \{-1,1\}^2$ solving the equation $$H + \epsilon f(n) = H$$ for all $n \in \Z^2$ and $\epsilon = \pm 1$, where $H \coloneqq \{ (y_1,y_2) \in \Z_N^2: y_1+y_2=0\}$.
    \item[(ii')]  There exist functions $f_1,f_2 \colon \Z^2 \to \{-1,1\}$ solving the equation $$f_1(n)+f_2(n) = 0$$ for all $n \in \Z^2$.
\end{itemize}
can be easily seen to be equivalent after making the substitution $f(n) = (f_1(n), f_2(n))$.  Secondly, for any $h \in \Z^2$, the two (trivially true) statements
\begin{itemize}
    \item[(i'')]  There exist a functions $f_1,f_2 \colon \Z^2 \to \{-1,1\}$ solving the equation $$(\{0\} + \epsilon f_1(n)) \uplus (\{0\} + \epsilon f_2(n+h)) = \{-1,1\}$$ for all $n \in \Z^2$ and $\epsilon = \pm 1$.
    \item[(ii'')]  There exist functions $f_1,f_2 \colon \Z^2 \to \{-1,1\}$ solving the equation $$f_2(n+h) = -f_1(n)$$ for all $n \in \Z^2$.
\end{itemize}
are also easily seen to be equivalent (the solution sets $(f_1,f_2)$ for (i'') and (ii'') are identical). 
\end{remark}

Returning to the general case, we first show that (ii) implies (i).  Let  $f_{j,d} \colon \Z^2 \to \{-1,1\}$ be solutions to \eqref{func-eq-2}, \eqref{func-eq-3}.  We let $f_j \colon \Z^2 \to \{-1,1\}^D$ be the function
\begin{equation}\label{fjn}
 f_j(n) \coloneqq (f_{j,1}(n), \dots, f_{j,D}(n))
 \end{equation}
for $n \in \Z^2$ and $j=1,2$, where we now view the Hamming cube $\{-1,1\}^D$ as lying in $\Z_N^D$.  For any $j=1,2$, $m=1,\dots,M$, $n \in \Z^2$, and $\epsilon= \pm 1$, we see from \eqref{func-eq-2}, \eqref{hjm-def} that
$$ \epsilon f_j(n) \in H_j^{(m)}$$
and hence \eqref{system-1} holds.  Similarly, for any $d=1,\dots,D_0$, $n \in \Z^2$, and $\epsilon = \pm 1$ we have from \eqref{func-eq-3} that
$$ (\{0\} + \epsilon f_{1,d}(n)) \uplus (\{0\} + \epsilon f_{2,d}(n+h_d)) = \{-1,1\}$$
which implies \eqref{system-2}. This shows that (ii) implies (i).

Now we show that (i) implies (ii).  Let $f_1,f_2$ be a solution to the system \eqref{system-1}, \eqref{system-2}.  We may express $f_j$ in components as
\eqref{fjn}, where the $f_{j,d}$ are functions from $\Z^2$ to $\{-1,1\}$.  From \eqref{system-1} we see that
$$ (f_{j,1}(n),\dots,f_{j,D}(n)) \in H_j^{(m)}$$
for all $n \in \Z^d$, $j=1,2$, $m=1,\dots,M_j$ (viewing the tuple as an element of $\Z_N^D$), or equivalently that
$$ \sum_{d=1}^D a_{j,d}^{(m)} f_{j,d}(n) = 0 \hbox{ mod } N.$$
The left-hand side is an integer that does not exceed $\sum_{d=1}^D |a_{j,d}^{(m)}|$ in magnitude, so for $N$ large enough we have
$$ \sum_{d=1}^D a_{j,d}^{(m)} f_{j,d}(n) = 0,$$
that is to say \eqref{func-eq-1} holds. Similarly, from \eqref{system-2} we see that
$$ \{ f_{1,d}(n)\} \uplus \{ f_{2,d}(n + h_d) \} = \{-1,1\}$$
for all $n \in \Z^2$ and $d=1,\dots,D_0$, which gives \eqref{func-eq-2}.  This proves that (i) implies (ii). These arguments are valid in every universe of ZFC, thus Theorem \ref{main-linear} implies Theorem \ref{main-hamming}.

It now remains to establish Theorem \ref{main-linear}.  This is the objective of the next two sections of the paper.

\section{Reduction to a local Boolean constraint}\label{boolean-sec}

In this section we show how Theorem \ref{main-boolean} implies Theorem \ref{main-linear}.  (One can also easily establish the converse implication, but we will not need that implication here.)

We begin with some preliminary reductions.  We first claim that Theorem \ref{main-boolean} implies a strengthening of itself in which the set $\Omega$ can be taken to be symmetric: $\Omega = -\Omega$; also, we can take $D \geq 2$.  To see this, suppose that we can find $D,L,h_1,\dots,h_L,\Omega$ obeying the conclusions of Theorem \ref{main-boolean}.  We then introduce the symmetric set $\Omega' \subset \{-1,1\}^{(D+1)L}$ to be the collection of all tuples $(\omega_{d,l})_{d=1,\dots,D+1; l=1,\dots,L}$ obeying the constraints
$$
\omega_{D+1,1} = \dots = \omega_{D+1,L}$$
and
$$ (\omega_{d,l} \omega_{D+1,l})_{d=1,\dots,D;l=1\dots,L} \in \Omega.$$
Clearly $\Omega'$ is symmetric.  If $f_1,\dots,f_D \colon \Z^2 \to \{-1,1\}$ obeys the constraint \eqref{boolean}, then by setting $f_{D+1} \colon \Z^2 \to \{-1,1\}$ to be the constant function $f_{D+1}(n)=1$, we see from construction that
\begin{equation}\label{fad}
(f_d(n+h_l))_{d=1,\dots,D+1; l=1,\dots,L} \in \Omega'
\end{equation}
for all $n \in \Z^2$.  Conversely, if there was a solution $f_1,\dots,f_{D+1} \colon \Z^2 \to \{-1,1\}$ to \eqref{fad}, then we must have
$$ f_{D+1}(n+h_1) = \dots = f_{D+1}(n+h_L)$$
and
$$ (f_d(n+h_l) f_{D+1}(n+h_l))_{d=1,\dots,D;l=1\dots,L} \in \Omega,$$
and then the functions $f_d f_{D+1} \colon \Z^2 \to \{-1,1\}$ for $d=1,\dots,D$ would form a  solution to \eqref{boolean}.  As these arguments are formalizable in ZFC, we see that Theorem \ref{main-boolean} for the specified choice of $D,L,h_1,\dots,h_L,\Omega$ implies Theorem \ref{main-boolean} for $D+1,L,h_1,\dots,h_L,\Omega'$, giving the claim.

Now consider the system \eqref{boolean} for some $D,L,h_1,\dots,h_L,\Omega$ with $D \geq 2$ and $\Omega \subset \{-1,1\}^{DL}$ symmetric, and some unknown functions $f_1,\dots,f_D \colon \Z^2 \to \{-1,1\}$.  The constraint \eqref{boolean} involves multiple functions as well as multiple shifts.  We now ``decouple'' this constraint into an equivalent system of simpler constraints, which either involve just two functions, or do not involve any shifts at all.  Namely, we introduce a variant system involving some other unknown functions $f_{j,d,l} \colon \Z^2 \to \{-1,1\}$ with $j=1,2$, $d=1,\dots,D$, $l=1,\dots,L$, consisting of the symmetric Boolean constraint
\begin{equation}\label{bool-1}
(f_{1,d,l}(n))_{d=1,\dots,D; l=1,\dots,L} \in \Omega,
\end{equation}
for all $n \in \Z^2$, the additional symmetric Boolean constraints
\begin{equation}\label{bool-2}
f_{2,d,1}(n) = \dots = f_{2,d,L}(n)
\end{equation}
for all $n \in \Z^2$ and $d=1,\dots,D$, and the shifted constraints
\begin{equation}\label{bool-3}
f_{2,d,l}(n+h_l) = - f_{1,d,l}(n)
\end{equation}
for all $n \in \Z^2$, $d=1,\dots,D$, and $l=1,\dots,L$.  Observe that if $f_1,\dots,f_D \colon \Z^2 \to \{-1,1\}$ solve \eqref{boolean}, then the functions $f_{j,d,l} \colon\Z^2 \to \{-1,1\}$ defined by
$$ f_{1,d,l}(n) \coloneqq f_d(n+h_l)$$
and
$$ f_{2,d,l}(n) \coloneqq - f_d(n)$$
obey the system \eqref{bool-1}, \eqref{bool-2}, \eqref{bool-3}; conversely, if $f_{j,d,l} \colon \Z^2 \to \{-1,1\}$ solve \eqref{bool-1}, \eqref{bool-2}, \eqref{bool-3}, then from \eqref{bool-2} we have $f_{2,d,l}(n) = -f_d(n)$ for all $d=1,\dots,D$, $l=1,\dots,L$, and some functions $f_1,\dots,f_D \colon \Z^2 \to \{-1,1\}$, and then from \eqref{bool-1}, \eqref{bool-3} we see that $f_1,\dots,f_D$ solve \eqref{boolean}.  These arguments are formalizable in ZFC, so we conclude that the question of whether the system \eqref{bool-1}, \eqref{bool-2}, \eqref{bool-3} admits solutions is undecidable.

A symmetric set $\Omega \subset \{-1,1\}^{DL}$ can be viewed as the Hamming cube $\{-1,1\}^{DL}$ with a finite number of pairs of antipodal points $\{\epsilon, -\epsilon\}$ removed.  The constraint \eqref{bool-2} is constraining the tuple $(f_{2,d,l}(n))_{d=1,\dots,D; l=1,\dots,L}$ to a symmetric subset of $\{-1,1\}^{DL}$, which can thus also be viewed in this fashion.  Relabeling $f_{j,d,l}$ as $f_{j,d}$ for $d=1,\dots,D_0 \coloneqq DL$, and assigning the shifts $h_1,\dots,h_L$ to these labels appropriately, we conclude the following consequence of Theorem \ref{main-boolean}:

\begin{theorem}[An undecidable system of antipode-avoiding constraints]\label{antipode}  There exist standard integers $D_0 \geq 2$ and $M_1,M_2 \geq 1$, shifts $h_1,\dots,h_{D_0} \in \Z^2$, and vectors $\epsilon_j^{(m)} \in \{-1,1\}^{D}$ for $j=1,2$ and $m=1,\dots,M_j$ such that the question of whether there exist functions $f_{j,d} \colon \Z^2 \to \{-1,1\}$, for $j=1,2$, $d=1,\dots,D_0$ that solve the constraints
\begin{equation}\label{antipode-1}
(f_{j,d}(n))_{d=1,\dots,D_0} \not \in \{ -\epsilon_j^{(m)}, \epsilon_j^{(m)} \}
\end{equation}
for all $n \in \Z^2$, $j=1,2$, $m =1,\dots,M_j$, as well as the constraints
\begin{equation}\label{antipode-2}
f_{2,d}(n+h_d) = - f_{1,d}(n)
\end{equation}
for all $n \in \Z^2$ and  $d = 1,\dots,D_0$, is undecidable (when expressed as a first-order sentence in ZFC).
\end{theorem}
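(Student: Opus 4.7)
The plan is to reduce from \thmref{main-boolean}, following essentially the three steps already outlined in the preceding discussion. The first step is to strengthen \thmref{main-boolean} so that the local constraint set $\Omega$ is symmetric under negation and $D \geq 2$. I would achieve this by padding with an extra Boolean function $f_{D+1}$: given data $(D,L,h_1,\dots,h_L,\Omega)$ as in \thmref{main-boolean}, define $\Omega' \subset \{-1,1\}^{(D+1)L}$ to consist of those tuples $(\omega_{d,l})$ whose last row $\omega_{D+1,\cdot}$ is constant and for which $(\omega_{d,l}\omega_{D+1,l})_{d\leq D,\ l\leq L} \in \Omega$. Flipping all coordinates leaves $\omega_{d,l}\omega_{D+1,l}$ invariant, so $\Omega'$ is symmetric, and given a solution $f_{D+1}$ to the padded system one recovers a solution to the original via $f_d \coloneqq f_d' f_{D+1}'$.

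The second step is to decouple the shifts $h_1,\dots,h_L$ from the local constraint by introducing fresh auxiliary functions $f_{1,d,l}$ (playing the role of $f_d(\cdot + h_l)$) together with $f_{2,d,l}$ (playing the role of $-f_d$), so as to preserve the antipodal structure produced by the previous step. The resulting system has three pieces: the now-\emph{local} symmetric constraint $(f_{1,d,l}(n))_{d,l} \in \Omega$, the local symmetric agreement constraint $f_{2,d,1}(n) = \dots = f_{2,d,L}(n)$, and the shifted equations $f_{2,d,l}(n + h_l) = -f_{1,d,l}(n)$. The forward direction of the equivalence uses the explicit substitutions just described; the reverse uses the agreement constraint to define $f_d \coloneqq -f_{2,d,l}$ (independent of $l$), after which \eqref{bool-1} and the shifted equations force the original constraint \eqref{boolean}.

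The third and final step is to recast both local symmetric constraints in antipode-avoiding form. Any symmetric subset of a Hamming cube is the complement of a finite union of antipodal pairs $\{\epsilon,-\epsilon\}$; so the constraint $(f_{1,d,l}(n))_{d,l} \in \Omega$ becomes $(f_{1,d,l}(n))_{d,l} \notin \{-\epsilon_1^{(m)}, \epsilon_1^{(m)}\}$ for finitely many $\epsilon_1^{(m)}$, and likewise for the agreement constraint on $(f_{2,d,l}(n))_{d,l}$ with vectors $\epsilon_2^{(m)}$. Relabeling the index pair $(d,l)$ as a single index $d' \in \{1,\dots,D_0\}$ with $D_0 \coloneqq DL$ and setting $h_{d'}$ to be the shift $h_l$ attached to the label, the shifted equations translate verbatim into \eqref{antipode-2}. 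Since all reductions are formalizable in ZFC and preserve solvability in every universe, the logical undecidability of \thmref{main-boolean} passes to the statement of \thmref{antipode}.

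The content of the argument is routine; the only real obstacle is bookkeeping, namely carrying out the symmetrization, the splitting of $f_d$ into $(f_{1,d,l}, f_{2,d,l})$, and the final relabeling of $(d,l)$ into a single index in a compatible way so that the shifts end up exactly matching the coordinates of \eqref{antipode-2}.
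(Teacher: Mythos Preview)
Your proposal is correct and follows exactly the paper's approach; indeed, the paper places its proof of \thmref{antipode} in the discussion \emph{preceding} the theorem statement, and your three steps (symmetrizing $\Omega$ via the auxiliary function $f_{D+1}$, decoupling shifts via the triple system \eqref{bool-1}--\eqref{bool-3}, and rewriting symmetric Hamming constraints as antipode-avoidance) are precisely that argument.
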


This is already quite close to Theorem \ref{main-linear}, except that the linear constraints \eqref{func-eq-2} have been replaced by antipode-avoiding constraints \eqref{antipode-1}.  To conclude the proof of Theorem \ref{main-linear}, we will show that each antipode-avoiding constraint \eqref{antipode-1} can be encoded as a linear constraint of the form \eqref{func-eq-2} after adding some more functions.

To simplify the notation we will assume that $M_1=M_2=M$, which one can assume without loss of generality by repeating the vectors $\epsilon_j^{(m)}$ as necessary.  The key observation is the following.  If $\epsilon = (\epsilon_1,\dots,\epsilon_{D_0}) \in \{-1,1\}^{D_0}$ and $y_1,\dots,y_{D_0} \in \{-1,1\}^{D_0}$, then the following claims are equivalent:
\begin{itemize}
\item[(a)] $(y_1,\dots,y_{D_0}) \not \in \{-\epsilon, \epsilon\}$.
\item[(b)] $\epsilon_1 y_1 + \dots + \epsilon_{D_0} y_{D_0} \in \{ -D_0 + 2, -D_0 + 4, \dots, D_0 - 4, D_0 - 2 \}$.
\item[(c)] There exist $y'_1,\dots,y'_{D_0-2} \in \{-1,1\}$ such that $\epsilon_1 y_1 + \dots + \epsilon_{D_0} y_{D_0} + y'_1 + \dots + y'_{D_0-2} = 0$.
\end{itemize}
Indeed, it is easy to see from the triangle inequality and parity considerations (and the hypothesis $D_0 \geq 2$) that (a) and (b) are equivalent, and that (b) and (c) are equivalent. The point is that the antipode-avoiding constraint (a) has been converted into a linear constraint (c) via the addition of some additional variables.

\begin{example}  As a simple example of this equivalence (with $D_0=4$ and $\epsilon_1=\dots=\epsilon_4=1$), given a triple $(y_1,y_2,y_3,y_4) \in \{-1,1\}^4$, we see that the following claims are equivalent:
\begin{itemize}
    \item [(a')] $(y_1,y_2,y_3,y_4) \not \in \{ (-1,-1,-1,-1), (+1,+1,+1,+1) \}$.
    \item[(b')] $y_1+y_2+y_3+y_4 \in \{-2,0,+2\}$.
    \item[(c')] There exist $y_5, y_6 \in \{-1,1\}$ such that $y_1+y_2+y_3+y_4+y_5+y_6 = 0$.
\end{itemize}
\end{example}

We now set $D \coloneqq D_0 + M(D_0-2)$ and consider the question of whether there exist functions $f_{j,d} \colon \Z^2 \to \{-1,1\}$, for $j=1,2$, $d=1,\dots,D$ that solve the linear system of equations
\begin{equation}\label{cons}
\sum_{d=1}^{D_0} \epsilon_{j,d}^{(m)} f_{j,d}(n) + \sum_{d=1}^{D_0-2} f_{j,D_0+(m-1)(D_0-2)+d}(n) = 0
\end{equation}
for $j=1,2$, $m=1,\dots,M$, $n \in \Z^2$, as well as the linear system \eqref{antipode} for $j=1,2$, $n \in \Z^2$, and $d=1,\dots,D_0$.  In view of the equivalence of (a) and (c) (and the fact that for each $j=1,2$, $m=1,\dots,M$, and $n \in \Z^2$, the variables $f_{j,D_0+(m-1)(D_0-2)+d}(n)$ appear in precisely one constraint, namely the equation \eqref{cons} for the indicated values of $j,m,n$) we see that this system of equations \eqref{antipode-2}, \eqref{antipode} admits a solution if and only if the system of equations \eqref{antipode-1}, \eqref{antipode-2} admits a solution.  This argument is valid in every universe of ZFC, hence the solvability of the system \eqref{antipode-2}, \eqref{antipode} is undecidable.  This completes the derivation of Theorem \ref{main-linear} from Theorem \ref{main-boolean}.

It now remains to establish Theorem \ref{main-boolean}.  This is the objective of the next section of the paper.

\section{Undecidability of local Boolean constraints}\label{tiling}

In this section we prove Theorem \ref{main-boolean}, which by the preceding reductions also establishes Theorem \ref{main}.

Our starting point is the existence of an undecidable tiling equation
\begin{equation}\label{tile-z2}
 \Tile(F_1,\dots,F_J; \Z^2)
\end{equation}
for some standard $J$ and some finite $F_1,\dots,F_J \subset \Z^2$.  This was first shown\footnote{\label{berfoot}Berger's construction is able to encode any instance of the halting problem for Turing machines (with empty input) as a (Wang) tiling problem; since the consistency of ZFC is an undecidable statement equivalent to the non-halting of a certain Turing machine with empty input, this gives the required claim of undecidability.} in \cite{Ber} (after applying the reduction in \cite{golomb}), with many subsequent proofs; see for instance \cite{jv} for a survey. One can for instance take the tile set in \cite{ollinger-2}, which has $J = 11$, though the exact value of $J$ will not be of importance here.  

Note that to any solution 
$$(A_1,\dots,A_J) \in \Tile(F_1,\dots,F_J;\Z^2)_\Universe$$
in $\Z^2$ of the tiling equation \eqref{tile-z2}, one can associate a coloring function $c \colon \Z^2 \to C$ taking values in the finite set
$$ C \coloneqq \biguplus_{j=1}^J \{j\} \times F_j$$
by defining
$$ c(a_j + h_j) \coloneqq (j, h_j)$$
whenever $j=1,\dots,J$, $a_j \in A_j$, and $h_j \in F_j$.  The tiling equation \eqref{tile-z2} ensures that the coloring function $c$ is well-defined.  Furthermore, from construction we see that $c$ obeys the constraint
\begin{equation}\label{cna}
c(n) = (j,h_j) \implies c(n-h_j+h'_j) = (j,h'_j)
\end{equation}
for all $n \in \Z^2$, $j=1,\dots,J$, and $h_j, h'_j \in F_j$.  Conversely, suppose that $c \colon \Z^2 \to C$ is a function obeying \eqref{cna}.  Then if we define $A_j$ for each $j=1,\dots,J$ to be the set of those $a_j \in \Z^2$ such that $c(a_j+h_j) = (j,h_j)$ for some $h_j \in F_j$, from \eqref{cna} we have $c(a_j+f'_j) = (j,f'_j)$ for all $j=1,\dots,J$, $a_j \in A_j$, and $f'_j \in F_j$, which implies that $A_1,\dots,A_J$ solve the tiling equation \eqref{tile-z2}.  Thus the solvability of \eqref{tile-z2} is equivalent to the solvability of the equation \eqref{cna}; as the former is undecidable in ZFC, the latter is also, since the above arguments are valid in every universe of ZFC.

Since the set $C=\biguplus_{j=1}^J \{j\} \times F_j$ is finite, one can establish an explicit bijection $\iota \colon C \to \Omega$ between this set and some subset $\Omega$ of $\{-1,1\}^D$ for some $D$.  Composing $c$ with this bijection, we see that the question of locating Boolean functions $f_1,\dots,f_D \colon \Z^2 \to \{-1,1\}$ obeying the constraints
\begin{equation}\label{con-1}
(f_1(n),\dots,f_D(n)) \in \Omega
\end{equation}
and
\begin{equation}\label{con-2}
(f_1(n),\dots,f_D(n)) = \iota(j,h_j) \implies (f_1(n-h_j+h'_j),\dots,f_D(n-h_j+h'_j)) = \iota(j,h'_j)
\end{equation}
for all $n \in \Z^2$, $j=1,\dots,J$, and $h_j, h'_j \in F_j$, is undecidable in ZFC.  However, this set of constraints is of the type considered in Theorem \ref{main-boolean} (after enumerating the set of differences $\{ h_j - h'_j: j=1,\dots,J; h_j,h'_j \in F_j\}$ as $h_1,\dots,h_L$ for some $L$, and combining the various constraints in \eqref{con-1}, \eqref{con-2}), and the claim follows.

\section{Proof of \thmref{main'} }\label{extension}

In this section we modify the ingredients of the proof of \thmref{main} to establish \thmref{main'}. The proofs of both theorems proceed along similar lines, and in fact are both deduced from a common result in Theorem \ref{main-hamming}; see Figure \ref{fig:logic}.

We begin by proving the following analogue of \thmref{combine}. 

\begin{theorem}[Combining multiple tiling equations into a single equation]\label{combine'}  Let $J,M,d \geq 1$ and $N>M$ be standard natural numbers.  
For each $m=1,\dots,M$, let $F_1^{(m)},\dots,F_J^{(m)}$ be finite non-empty subsets of $\Z^d$, and let $E^{(m)}$ be a periodic subset of $\Z^d$.  Define the finite sets $\tilde F_1,\dots,\tilde F_J \subset \Z^d \times \{1,\dots, M\}$ and the periodic set $\tilde E \subset \Z^d \times \Z$ by 
\begin{equation}\label{fj-stack'}
 \tilde F_j \coloneqq \biguplus_{m=1}^M F_j^{(m)} \times \{m\}
 \end{equation}
and
\begin{equation}\label{e0-stack'}
 \tilde E \coloneqq  \biguplus_{m=1}^M E^{(m)} \times (N\Z+m ).
 \end{equation}
\begin{itemize}
\item[(i)]  The system $\Tile(F_1^{(m)},\dots,F_J^{(m)}; E^{(m)}), m=1,\dots,M$ of tiling equations is aperiodic if and only if the tiling equation
$\Tile(\tilde F_1,\dots,\tilde F_J; \tilde E)$ is aperiodic.
\item[(ii)]  The system $\Tile(F_1^{(m)},\dots,F_J^{(m)}; E^{(m)}), m=1,\dots,M$ of tiling equations is undecidable if and only if the tiling equation
$\Tile(\tilde F_1,\dots,\tilde F_J; \tilde E)$ is undecidable.
\end{itemize}
\end{theorem}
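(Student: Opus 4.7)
The plan is to mirror the proof of Theorem \ref{combine} given in Section \ref{combine-sec}, with the role of the cyclic group $\Z_N$ replaced by the periodic subset $N\Z + \{1,\dots,M\}$ of $\Z$. The new feature is that we now stack the $M$ tiling equations along a lattice direction rather than a cyclic one, so we must work with a periodic target set $\tilde E \subset \Z^d \times \Z$ instead of $\Z^d \times \tilde E_0 \subset \Z^d \times G_0 \times \Z_N$. The hypothesis $N>M$ plays exactly the same bookkeeping role here as it did there: it guarantees that every integer of the form $k+m$ with $k \in N\Z$ and $m \in \{1,\dots,M\}$ determines $(k,m)$ uniquely.

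For the forward direction of both (i) and (ii), given a solution $(A_1,\dots,A_J)$ of the system I will set $\tilde A_j \coloneqq A_j \times N\Z$; a direct expansion of
$$\tilde A_j \oplus \tilde F_j = \biguplus_{m=1}^M (A_j \oplus F_j^{(m)}) \times (N\Z+m)$$
together with the hypothesis that $(A_1,\dots,A_J)$ solves each equation of the system yields $\biguplus_j \tilde A_j \oplus \tilde F_j = \tilde E$. Periodicity of each $A_j$ (i.e.\ period group of finite index in $\Z^d$) passes to periodicity of $\tilde A_j$ by taking the product of its period group with $N\Z$.

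For the reverse direction, the key step (and the one I expect to require the most care) is showing that any solution $(\tilde A_1,\dots,\tilde A_J)$ of the combined tiling equation satisfies $\tilde A_j \subset \Z^d \times N\Z$ for every $j$. I would adapt the ``shift to $\{0\}$'' argument from Section \ref{combine-sec}: given $(g,n) \in \tilde A_j$, the non-emptiness of each $F_j^{(m)}$ supplies an element $(f_m,m) \in \tilde F_j$, so the combined tiling equation forces $(g+f_m, n+m) \in \tilde E$, hence $n+m \in N\Z + m'(m)$ for some $m'(m) \in \{1,\dots,M\}$. The displacement $c \coloneqq m'(m)-m$ is then determined modulo $N$ by $n$ alone, and since its possible values lie in $\{-(M-1),\dots,M-1\}$, which by $N>M$ are pairwise distinct modulo $N$, it must equal a single integer $c$ independent of $m$. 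The requirement $m+c \in \{1,\dots,M\}$ for every $m \in \{1,\dots,M\}$ then forces $c=0$, so $n \in N\Z$.

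Once this containment is in hand, I will write $\tilde A_j = \bigcup_{k \in N\Z} A_j^{(k)} \times \{k\}$ and slice the combined tiling equation at each second coordinate $s=k+m$ (uniquely decomposable thanks to $N>M$) to obtain the family of equations $\biguplus_j A_j^{(k)} \oplus F_j^{(m)} = E^{(m)}$ for every $k \in N\Z$ and $m=1,\dots,M$. Setting $A_j \coloneqq A_j^{(0)}$ produces a solution of the original system. To complete (i), I will note that if each $\tilde A_j$ is periodic with period group $\Lambda_j$ of finite index in $\Z^{d+1}$, then $\Lambda_j \cap (\Z^d \times \{0\})$ is the kernel of the composition $\Z^d \hookrightarrow \Z^{d+1} \twoheadrightarrow \Z^{d+1}/\Lambda_j$ and so has finite index in $\Z^d$, and every element $(h,0)$ of this intersection stabilizes $A_j^{(0)}$; hence each $A_j$ is periodic. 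Since all of these arguments are formalizable in ZFC and thus valid in every universe, we obtain both the aperiodicity statement (i) and the undecidability statement (ii).
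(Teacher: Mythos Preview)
Your approach is essentially the same as the paper's proof: lift solutions by taking the product with $N\Z$, and in the reverse direction first show $\tilde A_j \subset \Z^d \times N\Z$ and then slice. The paper only writes out (i) and leaves (ii) to the reader, while you handle both and are more explicit about why the slice $A_j^{(0)}$ inherits periodicity; that extra care is fine.

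There is, however, one incorrect step in your containment argument. You assert that the possible values of $c = m'(m)-m$, lying in $\{-(M-1),\dots,M-1\}$, are ``by $N>M$ pairwise distinct modulo $N$''. This set has $2M-1$ elements, so pairwise distinctness mod $N$ would require $N \geq 2M-1$, which does not follow from $N>M$ once $M \geq 3$ (e.g.\ $M=3$, $N=4$ gives $-2 \equiv 2 \pmod 4$). The conclusion $n \in N\Z$ is nevertheless correct, and the paper reaches it more directly: from $n+m \in \{1,\dots,M\}+N\Z$ for every $m=1,\dots,M$ one gets, after reducing mod $N$, that the residue $\bar n$ satisfies $\bar n + \{1,\dots,M\} \subseteq \{1,\dots,M\}$ in $\Z_N$, hence equality by cardinality; since $N>M$ the interval $\{1,\dots,M\}$ has trivial stabilizer in $\Z_N$, forcing $\bar n = 0$. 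Replace your displacement argument with this and the proof goes through.
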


\begin{proof}
We will just prove (i); the proof of (ii) is similar and is left to the reader.  

The argument will be a ``pullback'' of the corresponding proof of Theorem \ref{combine}(i). 
 First, suppose that the system $\Tile(F_1^{(m)},\dots,F_J^{(m)}; E^{(m)}), m=1,\dots,M$ of tiling equations has a periodic solution $A_1,\dots,A_J \subset \Z^d$, thus
 \begin{equation}\label{afaf} 
 A_1 \oplus F_1^{(m)} \uplus \dots A_J \oplus F_J^{(m)} = E^{(m)}
 \end{equation}
 for $m=1,\dots,M$.  If we then introduce the periodic sets 
 $$\tilde A_j \coloneqq A_j\times N\Z \subset \Z^d \times \Z,\quad j=1,\dots,J$$
 then we have
 $$ \tilde A_j \oplus \tilde F_j = \biguplus_{m=1}^M (A_j \oplus F_j^{(m)}) \times (N\Z+m)$$
 for all $j=1,\dots,J$, and hence by \eqref{afaf}, \eqref{e0-stack'} we have
 \begin{equation}\label{taf}
 \tilde A_1 \oplus \tilde F_1 \uplus \dots \tilde A_J \oplus \tilde F_j = \tilde E.
 \end{equation}
Thus we have a periodic solution for the system
 $\Tile(\tilde F_1,\dots,\tilde F_J; \tilde E)$.
 
 Conversely, suppose that the system $\Tile(\tilde F_1,\dots,\tilde F_J; \tilde E)$ admits a periodic solution $\tilde A_1,\dots,\tilde A_J$, so that \eqref{taf} holds. Observe that if  $\tilde A_j\subset \Z^d \times N\Z$ for each $j=1,\dots,J$, then the ``slices'':
 $$A_j \coloneqq \{a\in\Z^d: (a,0)\in\tilde A_j\},\quad j=1,\dots, J$$
would be periodic and obey the equation \eqref{afaf} for every $m=1,\dots,M$, thus giving a periodic solution to the system of tiling equations $$\Tile(F_1^{(m)},\dots,F_J^{(m)}; E^{(m)}), \quad  m=1,\dots,M.$$

Now, suppose to the contrary that there is  $j_0 = 1,\dots,J$  such that there exists $(g,u)\in \tilde A_{j_0}$ where $n\in\Z\setminus N\Z$. 
From \eqref{taf} we see that for every $(f,m)\in\tilde F_{j_0}^{(m)}\neq \emptyset$, we have 
$$(g+f,u+m)\in \tilde E.$$ Thus $u+m\in \{1,\dots,M\}\oplus N\Z$ for every  $m = 1,\dots,M$. This is only possible if $u\in N\Z$, a contradiction.
Therefore, we have $\tilde A_j\subset \Z^d\times N\Z$,  for every $j = 1,\dots,J$ as needed.
\end{proof}

As in the proof of Theorem \ref{main},   \thmref{combine'} allows one to reduce the proof of \thmref{main'} to proving the following statement.

\begin{theorem}[An undecidable system of tiling equations with two tiles in $\Z^d$]\label{main-red'}  There exist standard natural numbers $d,M$, and for each $m=1,\dots,M$ there exist finite non-empty sets $F_1^{(m)}, F_2^{(m)} \subset \Z^d$ and periodic sets $E^{(m)} \subset \Z^d$ such that the system of tiling equations $\Tile(F^{(m)}_1,F^{(m)}_2; E^{(m)}), m=1,\dots,M$ is undecidable.
\end{theorem}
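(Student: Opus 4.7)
The plan is to deduce Theorem \ref{main-red'} from Theorem \ref{main-hamming}, in complete parallel with the derivation of Theorem \ref{main-red} in Sections \ref{function-sec}--\ref{hamming-sec}, but with the finite group factor $G_0$ replaced by a carefully constructed rigid tile sitting in some additional $\Z^{d-2}$ coordinates. The route follows the lower-right branch of Figure \ref{fig:logic}: first I would establish a $\Z^d$-valued analogue of Theorem \ref{main-function}, call it Theorem \ref{main-function'}, and then deduce Theorem \ref{main-red'} from it by means of an analogue of Proposition \ref{tile-function}, call it Proposition \ref{tile-function'}.

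Theorem \ref{main-function'} is essentially Theorem \ref{main-function} with the functions $f_1,f_2$ taking values in a bounded subset of $\Z^{d-2}$ rather than in a finite abelian group $G_0$. Its proof runs parallel to the chain Theorem \ref{main-hamming} $\Rightarrow$ Theorem \ref{main-function} in Sections \ref{hamming-sec}--\ref{linear-sec}: those reductions used the abelian structure of $G_0$ only to encode linear constraints on coordinate functions valued in $\{-1,1\}$, and such constraints are integer-linear and therefore lift directly to integer-valued functions. After choosing $d-2$ large enough to embed the relevant Hamming cube $\{-1,1\}^D$ into $\Z^{d-2}$, the required functional equations become identities among finite subsets of $\Z^{d-2}$, and one obtains the desired undecidability for functions $f_1,f_2 \colon \Z^2 \times \Z_2 \to \Z^{d-2}$.

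Next, I would prove Proposition \ref{tile-function'}, the conversion from functional equations to tiling equations in $\Z^d$. In Proposition \ref{tile-function}, the slice tile $\{0\} \times G_0$ partitioned $G \times G_0$ into fibers, and the group structure on $G_0$ forced every solution set $A_j$ to be the graph of a function $f_j \colon G \to G_0$. In $\Z^d$ there is no such group structure on the fibers; instead one replaces $\{0\} \times G_0$ by $\{0\}^2 \times \cR$, where $\cR \subset \Z^{d-2}$ is a rigid tile that tiles a chosen periodic set $P \subset \Z^{d-2}$ in essentially a unique way (up to lattice translation). This rigidity is the content of Lemma \ref{rigid-tile}, which constructs such a $\cR$ by decorating a baseline parallelotope with puzzle-piece style bumps and indentations. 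Once rigidity is established, the slice argument from the proof of Proposition \ref{tile-function} goes through verbatim: every solution to the tiling equation is forced to sit on a $\Z^2$-indexed lattice of translates, which we identify with the graph of a function $f_j \colon \Z^2 \times \Z_2 \to \Z^{d-2}$, and the functional equations \eqref{func-eq-stack} lift to tiling equations in $\Z^d$.

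The main obstacle is Lemma \ref{rigid-tile}: one must design $\cR$, and simultaneously the tiles $\tilde F_j^{(m)} \subset \Z^d$ built from the finite sets $F_j^{(m)} \subset \Z^{d-2}$, so that the bumps and indentations (i) enforce rigidity, thereby guaranteeing that every valid tiling arises from a function as above, and (ii) are mutually compatible across both tiles $\tilde F_1, \tilde F_2$, so that valid tilings actually exist whenever the functional equations admit a solution. These requirements are in tension, since overly aggressive rigidification may destroy all tilings, while insufficient decoration may admit spurious tilings that do not correspond to any function solution. Once Lemma \ref{rigid-tile} is in hand, the remaining steps are essentially verbatim pullbacks of the arguments of Sections \ref{function-sec}--\ref{hamming-sec}, and combining them with Theorem \ref{main-function'} yields Theorem \ref{main-red'}.
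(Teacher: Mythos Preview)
Your plan is essentially the paper's own route: the key ingredient you correctly identify---a rigid tile $\cR \subset \Z^k$ whose only tilings of $\Z^k$ are by cosets of a fixed lattice $\Lambda_0$---is exactly Lemma \ref{rigid-tile}, and its use to force solutions of the tiling system to be graphs of functions into $\Z^k/\Lambda_0 \cong G_0$ is exactly Proposition \ref{tile-function'}.

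Two clarifications are in order. First, your functions still have domain $\Z^2 \times \Z_2$, but Theorem \ref{main-red'} demands tiling equations in a pure $\Z^d$ with no finite-group factor; the paper handles this by an additional (easy) pullback of the $\Z_2$ to $\Z$, so that Theorem \ref{main-function'} concerns functions on $\Z^2 \times \Z$ rather than $\Z^2 \times \Z_2$ (the equivalence is immediate because the constraint \eqref{sys-1} already forces $2$-periodicity in that coordinate). Note also that the codomain of the functions remains the finite group $G_0$, not a bounded set in $\Z^{d-2}$: the rigid tile does not alter the functional equations themselves, it only allows them to be encoded as tiling equations in $\Z^k$ via the quotient $\pi \colon \Z^k \to \Z^k/\Lambda_0 \cong G_0$. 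Second, the ``tension'' you anticipate in designing $\cR$ does not actually arise. The rigid tile appears \emph{only} in the auxiliary equations \eqref{tile-2'} (the analogue of the permutation equations \eqref{tile-2}), whose sole job is to pin each fiber of $A_j$ to a single coset of $\Lambda_0$; the sets $F_j^{(m)}$ enter through the separate equations \eqref{tile-1'} and never interact with the bumps on $\cR$. So $\cR$ is designed once and for all (Lemma \ref{rigid-tile} simply decorates a box with one bump and one matching hole per coordinate axis), independently of the $F_j^{(m)}$, and there is no compatibility problem to resolve.
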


We will  show that  \thmref{main-hamming}  implies \thmref{main-red'}.  In order for the arguments from Section \ref{function-sec} to be effectively pulled back, we will first need to construct a rigid tile that can encode a finite group $\Z^k/\Lambda$ as the solution set to a tiling equation.

\begin{lemma}[A rigid tile]\label{rigid-tile}  Let $N_1,\dots,N_k \geq 5$, and let $\Lambda \leq \Z^k$ be the lattice
$$ \Lambda \coloneqq N_1\Z \times \dots \times N_k \Z.$$
Then there exists a finite subset $R$ of $\Z^k$ with the property that the solution set $\Tile(R;\Z^k)_\Universe$ of the tiling equation $\Tile(R;\Z^k)$ consists precisely of the cosets  $h+\Lambda$ of $\Lambda$, that is to say
$$\Tile(R;\Z^k)_\Universe = \Z^k / \Lambda.$$
\end{lemma}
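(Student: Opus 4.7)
The idea is to construct $R$ as a decoration of the standard fundamental domain $B \coloneqq \prod_{i=1}^k \{0, 1, \dots, N_i - 1\}$ for $\Lambda_0$. Since $B \oplus \Lambda_0 = \Z^k$, every coset of $\Lambda_0$ already lies in $\Tile(B; \Z^k)_\Universe$; however, for $k \geq 2$ the box $B$ is highly non-rigid, since one can independently translate entire ``rows'' and still obtain a tiling. To recover rigidity I would choose, for each $i = 1, \dots, k$, a point $q_i$ in the strict interior of $B$ (possible because $N_i \geq 5$) and define
\[ R \coloneqq (B \setminus \{q_1, \dots, q_k\}) \cup \{q_1 + N_1 e_1, \dots, q_k + N_k e_k\}, \]
so that $R$ is obtained from $B$ by cutting $k$ ``notches'' and attaching $k$ ``bumps,'' one per coordinate direction. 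The $q_i$ should be chosen mutually distinct and placed so that the bumps $q_i + N_i e_i$ are also mutually distinct and well separated; the assumption $N_i \geq 5$ supplies the needed room.

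Because each modification replaces a point with one of its $\Lambda_0$-translates, $R$ and $B$ coincide modulo $\Lambda_0$, and hence $R \oplus \Lambda_0 = B \oplus \Lambda_0 = \Z^k$. Shifting gives $R \oplus (\Lambda_0 + c) = \Z^k$ for any $c$, so every coset of $\Lambda_0$ solves the tiling equation. For the reverse containment, I would fix any solution $A \in \Tile(R; \Z^k)_\Universe$, pick $a \in A$, and aim to show $a + N_i e_i \in A$ for every $i$. Combined with a density comparison (the set $A$ has density $1/|R| = 1/|B|$, matching that of any coset of $\Lambda_0$), this forces $A = a + \Lambda_0$. The key observation is that the notch point $a + q_i$ is \emph{not} in $R + a$, so it must be covered by some other tile $a' \in A$; that is, $a + q_i - a' \in R$. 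The covering element is either a body point in $B \setminus \{q_1, \dots, q_k\}$ or a bump $q_j + N_j e_j$. The ``canonical'' case $a + q_i - a' = q_i + N_i e_i$ yields $a' = a - N_i e_i$, exactly what is needed.

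The main obstacle is ruling out the spurious coverings of the notch at $a + q_i$. In the case of a wrong-direction bump ($j \neq i$), the displacement $a' = a + q_i - q_j - N_j e_j$ places tile $a'$ at a position where the notch-analysis propagated at the neighbors forces a cascading conflict with the bumps and notches of $R + a'$; by choosing the $q_i$ mutually distant the bumps in different directions can be made distinguishable enough to preclude this. In the body-covering case, the displacement $a' = a + q_i - b$ with $b \in B \setminus \{q_1, \dots, q_k\}$ is small enough that the vector $q_i - b$ lies in the difference set $B - B$, so the bodies $B + a$ and $B + a'$ generically overlap on an interior point of $B$, contradicting the disjointness required of a tiling; the hypothesis $N_i \geq 5$ ensures that such overlaps are realized by points of $B$ avoiding the finitely many removed notches. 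Once all spurious cases are excluded, one concludes $a - N_i e_i \in A$ for each $i$ and each $a \in A$, whence $A$ is a single coset of $\Lambda_0$ and the lemma follows.
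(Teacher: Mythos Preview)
Your approach is essentially identical to the paper's: both deform the fundamental box by $k$ notch--bump pairs (one per coordinate direction, each pair differing by $N_i e_i$), argue that filling the notch at $a+q_i$ forces $a-N_ie_i\in A$, and finish with a density comparison to conclude that $A$ is a single coset of $\Lambda_0$. The only cosmetic difference is that the paper places the $j$th notch on the face $x_j=0$ (at $(n_1,\dots,n_{j-1},0,n_{j+1},\dots,n_k)$ with $2\le n_i\le N_i-3$) rather than in the strict interior, so that the bump sits immediately adjacent to the body and the exclusion of spurious coverings is ``visually obvious''; like you, the paper does not spell out that case analysis in full.
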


\begin{figure}
    \centering
\centerline{\includegraphics[height=3in]{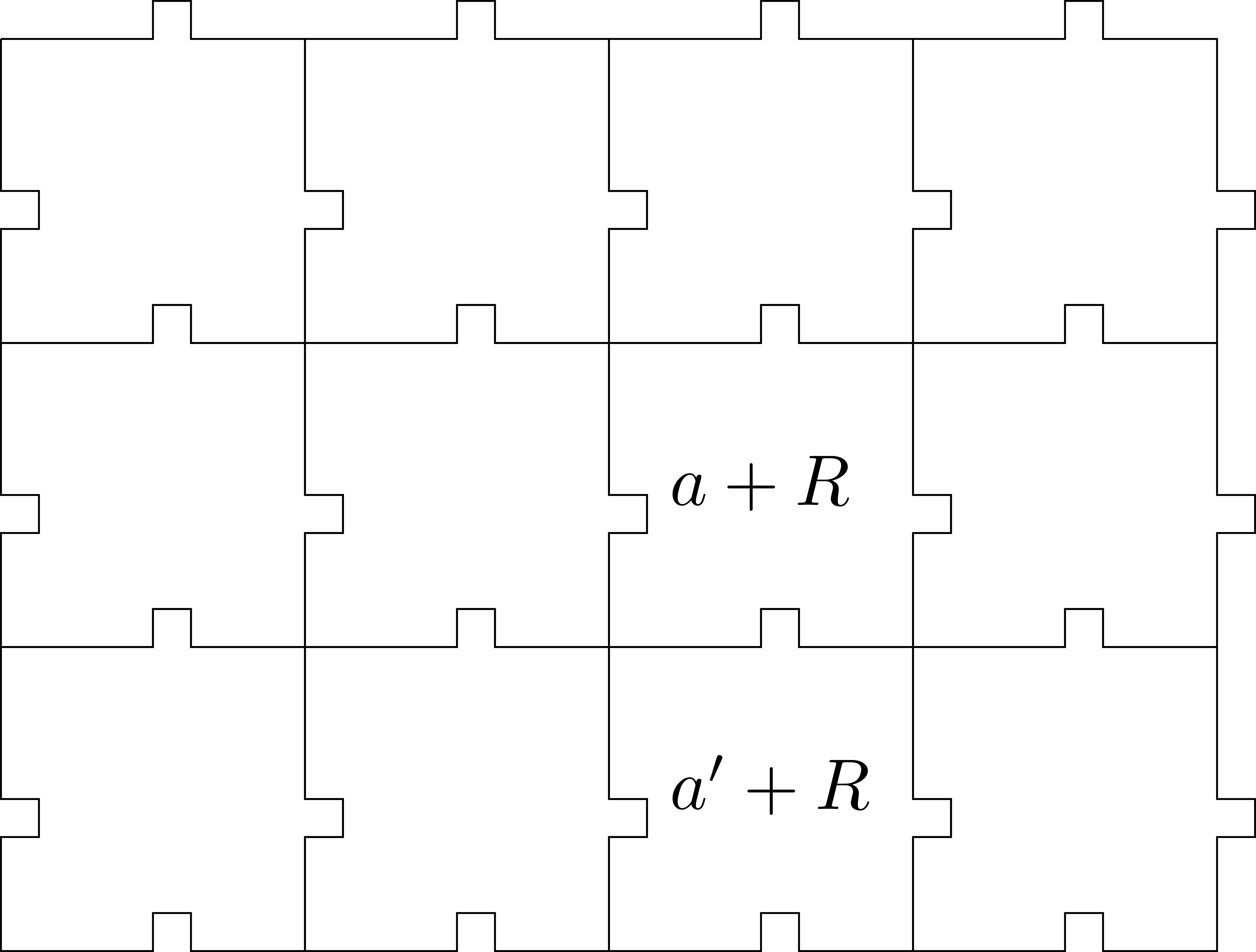}}
    \caption{A tiling by the rigid tile $R$ constructed in Lemma \ref{rigid-tile}.}
    \label{fig:rigid}
\end{figure}

\begin{proof}  As a first guess, one could take $R$ to be the rectangle
$$ R_0 \coloneqq \{ 0,\dots,N_1-1\} \times \dots \times \{0,\dots,N_k-1\}.$$
For this choice of $R$ we certainly have that that every coset $h+\Lambda$ solves the tiling equation $\Tile(R_0;\Z^k)$:
$$ (h+\Lambda) \oplus R_0 = \Z^k.$$
However, the tiling $\Lambda \oplus R_0 = \Z^k$ is not rigid, and it is possible to ``slide'' portions of this tiling to create additional tilings (cf. Example \ref{simple-tile}). To fix this we need to add\footnote{This basic idea of using bumps to create rigidity goes back to Golomb \cite{golomb}.} and remove some ``bumps'' to the sides of $R_0$ to prevent sliding.  There are many ways to achieve this; we give one such way as follows.  For each $j=1,\dots,k$, let $n_j$ be an integer with $2 \leq n_j \leq N_j-3$ (the bounds here are in order to keep the ``bumps'' and ``holes'' we shall create from touching each other).  We form $R$ from $R_0$ by deleting the elements
$$ (n_1,\dots,n_{j-1},0,n_{j+1},\dots,n_k)$$
from $R_0$ for each $j=1,\dots,k$, and then adding the points
$$ (n_1,\dots,n_{j-1},N_j,n_{j+1},\dots,n_k)$$
back to compensate.  Because $R$ was formed from $R_0$ by shifting some elements of $R_0$ by elements of the lattice $\Lambda$, we see that $\Lambda \oplus R = \Lambda \oplus R_0 = \Z^k$.  By translation invariance, we thus have the inclusion
$$\Z^k / \Lambda \subset \Tile(R;\Z^k)_\Universe.$$
It remains to prove the converse inclusion.  Suppose that $A \in  \Tile(R;\Z^k)_\Universe$, thus $A \subset \Z^k$ and $A \oplus R = \Z^k$.  Then for any $a \in A$ and $1 \leq j \leq k$, the point
$$ a + (n_1,\dots,n_{j-1},0,n_{j+1},\dots,n_k)$$
fails to lie in $a+R$ and thus must lie in some other translate $a'+R$ of $R$ for some $a' \in A$ such that $a'+R$ is disjoint from $a+R$.  From the construction of $R$ it can be shown after some case analysis (and is also visually obvious, see Figure \ref{fig:rigid}) that the only possible choice for $a'$ is $a' = a - N_j e_j$, where $e_1,\dots,e_k$ are the standard basis of $\Z^k$.  Thus the set $A$ is closed under shifts by negative integer linear combinations of $N_1 e_1, \dots, N_k e_k$.  If two elements $a, a'$ of $A$ lie in different cosets of $\Lambda$, then $A$ would contain the set
$$ \{ a, a'\} \oplus \{ -c_1 N_1 e_1 - \dots - c_k N_k e_k: c_1,\dots,c_k \in \N \}$$
which has density strictly greater than $\frac{1}{N_1 \dots N_k} = \frac{1}{|R|}$ in the lower left quadrant.  This contradicts the tiling equation $A \oplus R = \Z^k$.  Thus $A$ must lie in a single coset $y+\Lambda$ of $\Lambda$.  Since we have $(y+\Lambda) \oplus R = \Z^k = A \oplus R$, we must then have $A = y+\Lambda$, giving the desired inclusion.
\end{proof}

Now we can prove the following analogue  of \propref{tile-function}.

\begin{proposition}[Equivalence of tiling equations in $G\times\Z^k$ and functional equations]\label{tile-function'} 
Let $G$ be an explicitly finitely generated abelian group, and $G_1=\Z_{N_1}\times\dots\times \Z_{N_k}$ be an explicit finite abelian group with $N_1,\dots,N_k \geq 5$. Let $J,M \geq 1$ be standard natural numbers, and $\Lambda, R$ be as in \ref{rigid-tile}.
Suppose that for each $j=1,\dots,J$ and $m=1,\dots,M$ one is given a (possibly empty) finite subset $H_j^{(m)}$ of $G$ and a (possibly empty) subset $F_j^{(m)}$ of $\Z^k$.  For each $m=1,\dots,M$, assume also that we are given a subset $E_1^{(m)}$ of $G_1$ and let $E^{(m)} \coloneqq \pi^{-1}(E_1^{(m)})$, where $\pi \colon \Z^k \to G_1$ is the quotient homomorphism (with kernel $\Lambda$).
 We adopt the abbreviations
$$ [[a]] \coloneqq \{a\} \times R ; \quad [[a,b]] \coloneqq  \{ n \in \Z: a\leq n \leq b\} \times R \subset \Z\times \Z^k $$
for integers $a \leq b$.  Let $N>J$. Then the following are equivalent:
\begin{itemize}
\item[(i)]  The system of tiling equations
\begin{equation}\label{tile-1'}
\Tile\left( \left( -H_j^{(m)} \times \{0\} \times F_j^{(m)} \uplus \{0\} \times [[j]])\right)_{j=1}^J; \tilde E^{(m)} \right)
\end{equation}
for all $m=1,\dots,M$, together with the tiling equations
\begin{equation}\label{tile-2'}
\Tile\left( \left( \{0\} \times [[\sigma(j)]] \right)_{j=1}^J; G \times ([[1,J]]\oplus N\Z\times\Lambda ) \right)
\end{equation}
for every permutation $\sigma \colon \{1,\dots,J\} \to \{1,\dots,J\}$ of $\{1,\dots,J\}$, admit a solution, where
$$ \tilde E^{(m)} \coloneqq G \times (N\Z \times E^{(m)} \uplus [[1,J]] \oplus N\Z\times \Lambda ).$$
\item[(ii)]  There exist $f_j \colon G \to G_1$ for $j=1,\dots,J$ that obey the system of functional equations
\begin{equation}\label{func-eq-stack'}
\biguplus_{j=1}^J \biguplus_{h_j \in H_j^{(m)}} (F_j^{(m)} + f_j(n+h_j)) = E_1^{(m)} 
\end{equation}
for all $n \in G$ and $m=1,\dots,M$.
\end{itemize}
\end{proposition}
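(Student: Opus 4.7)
The plan is to mimic the proof of Proposition \ref{tile-function}, with the rigid tile $R$ from Lemma \ref{rigid-tile} playing the role of the finite group $G_0$. The identity $\Lambda_0 \oplus R = \Z^k$ lets $R$ serve as a set of coset representatives for $\Lambda_0$, and the rigidity conclusion $\Tile(R;\Z^k)_\Universe = \Z^k/\Lambda_0$ ensures that any tiling of $\Z^k$ by $R$ is a single coset of $\Lambda_0$. The $\Z$ factor replaces the $\Z_N$ factor in Proposition \ref{tile-function}, and $N\Z$-periodicity in this direction accounts for the periodicity of $E^{(m)} = \pi^{-1}(E_0^{(m)})$.

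For (ii) $\Rightarrow$ (i), fix any lift $\tilde f_j \colon G \to \Z^k$ of $f_j$ (for instance the unique representative $\tilde f_j(n) \in R$ of $f_j(n)$), and set
$$ A_j := \{(n, kN, y) \colon n \in G,\ k \in \Z,\ y \in \tilde f_j(n) + \Lambda_0 \} \subset G \times \Z \times \Z^k. $$
Using $\Lambda_0 \oplus R = \Z^k$ in the $\Z^k$-coordinate together with the fact that $\sigma(1),\dots,\sigma(J)$ are distinct modulo $N$ (since $N > J$), one computes
$$ \biguplus_{j=1}^J A_j \oplus (\{0\} \times [[\sigma(j)]]) = G \times ((\{1,\dots,J\} + N\Z) \times \Z^k) = G \times ([[1,J]] \oplus N\Z \times \Lambda_0), $$
which yields \eqref{tile-2'}. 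For \eqref{tile-1'}, the $\{0\} \times F_j^{(m)}$ piece of each tile keeps the second coordinate in $N\Z$ and contributes, after projecting the $\Z^k$-direction via $\pi$ and using that $R$ is a transversal for $\Lambda_0$, exactly the tiling of $G \times N\Z \times \pi^{-1}(E_0^{(m)})$ encoded by \eqref{func-eq-stack'}; the $[[j]]$ piece sends the second coordinate into $j + N\Z$ and covers $G \times (\{1,\dots,J\}+N\Z) \times \Z^k$ as in \eqref{tile-2'}. The disjointness built into \eqref{func-eq-stack'} is precisely what furnishes the direct-sum validity on the $\{0\}$-stratum.

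For (i) $\Rightarrow$ (ii), suppose $(A_1, \dots, A_J)$ solves the system. Applying \eqref{tile-2'} to every permutation $\sigma$, if $(g, u, y) \in A_j$ then $u + \sigma(j) \in \{1,\dots,J\} + N\Z$ for every $\sigma$; letting $\sigma(j)$ range over $\{1,\dots,J\}$ and using $N > J$ forces $u \in N\Z$, so $A_j \subset G \times N\Z \times \Z^k$. Slicing \eqref{tile-2'} at second coordinate $u + \sigma(j)$ with $u \in N\Z$ then shows that the fiber $\{y \in \Z^k \colon (n, u, y) \in A_j\}$ tiles $\Z^k$ by $R$, so by Lemma \ref{rigid-tile} it is a single coset $\tilde f_j^{(u)}(n) + \Lambda_0$. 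Setting $f_j(n) := \pi(\tilde f_j^{(0)}(n)) \in G_0$ and slicing \eqref{tile-1'} at second coordinate $0$, then projecting the $\Z^k$-direction by $\pi$ and using $E^{(m)} = \pi^{-1}(E_0^{(m)})$, one recovers exactly \eqref{func-eq-stack'}.

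The main obstacle is the careful bookkeeping of the direct-sum conditions: as in Proposition \ref{tile-function}, the $\{0\}$-stratum of each tile is where the nontrivial functional-equation content sits, and the disjointness asserted in \eqref{func-eq-stack'} must be cross-checked against the $\oplus$-validity for that stratum, whereas the $[[j]]$-strata serve only as nonempty padding whose purpose is to anchor the rigidity argument via Lemma \ref{rigid-tile}. A minor wrinkle is that $F_j^{(m)}$ is defined as a subset of $\Z^k$ but appears in \eqref{func-eq-stack'} as a subset of $G_0$; this is harmless as long as one chooses $F_j^{(m)}$ so that $\pi \restriction_{F_j^{(m)}}$ is injective, which can always be arranged without loss of generality.
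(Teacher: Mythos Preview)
Your proposal is correct and follows essentially the same approach as the paper's proof. Your construction of $A_j$ in the (ii)$\Rightarrow$(i) direction is exactly the paper's (since $\tilde f_j(n) + \Lambda_0 = \pi^{-1}(\{f_j(n)\})$), and your (i)$\Rightarrow$(ii) argument --- forcing the $\Z$-coordinate into $N\Z$ via the permutation equations, invoking Lemma~\ref{rigid-tile} on each fiber to obtain a coset of $\Lambda_0$, then slicing \eqref{tile-1'} at second coordinate $0$ and applying $\pi$ --- is precisely what the paper does. Your final remark about needing $\pi|_{F_j^{(m)}}$ injective is a fair observation about an implicit hypothesis that the paper does not spell out either; it is automatically satisfied in the intended application.
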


\begin{proof}
The proof of the direction  (ii) implies (i) is similar to the proof of this direction of \propref{tile-function}, with the only difference that the solution defined there should be pulled back, i.e., one should set
$$A_j \coloneqq \biguplus_{n\in G} \{n\} \times N\Z\times \pi^{-1}(\{f_j(n)\}) \subset G \times N\Z \times \Z^k$$
for $j=1,\dots,J$ to construct the desired solution to the system \eqref{tile-1'}, \eqref{tile-2'}.

We turn to prove (i) implies (ii). Let $A_1,\dots,A_J\subset G\times \Z\times \Z^k$ be a solution to the systems \eqref{tile-1'}, \eqref{tile-2'}.
As in the proof of 
\propref{tile-function}, by adapting the argument from the proof of \thmref{combine'} once again, one can show that $A_j\subset G \times N\Z\times \Z^k$.  For any $n \in G$ and $j=1,\dots,J$, if we then define the slice $A_{j,n} \subset \Z^k$ by the formula
$$ A_{j,n} \coloneqq \{ y \in \Z^k: (n,0,y) \in A_j \}$$
we conclude from \eqref{tile-2'} that
$$ A_{j,n} \oplus R = R \oplus \Lambda $$
which from Lemma \ref{rigid-tile} implies that $A_{j,n}$ is a coset of $\Lambda$, or equivalently that
$$ A_{j,n} = \pi^{-1}(f_j(n))$$
for some $f_j(n) \in G_1$. If one now inspects the $G \times \{0\} \times \Z^k$ slice of \eqref{tile-1}, we see that for any $m=1,\dots,M$ one has
$$ \biguplus_{j=1}^J \biguplus_{h_j \in H_j^{(m)}} A_{j,n+h_j} \oplus F_j^{(m)} = E^{(m)} $$
which gives \eqref{func-eq-stack'} upon applying $\pi$. This completes the derivation of (ii) from (i).
\end{proof}

The proof of \propref{tile-function'} is valid in every universe $\Universe^*$ of ZFC, so in particular the problem in \propref{tile-function'}(i) is undecidable if and only if the one in  \propref{tile-function'}(ii) is.  Hence, to prove Theorem \ref{main-red'}, it will suffice to establish the following analogue of  \thmref{main-function}, in which $\Z^2\times \Z_2$ is pulled back to $\Z^2 \times \Z$.

\begin{theorem}[An undecidable system of functional equations in $\Z^2\times \Z$]\label{main-function'}  There exists an explicit finite abelian group $G_0$, a standard integer $M \geq 1$, and for each $m=1,\dots,M$ there exist (possibly empty) finite subsets $H_1^{(m)}, H_2^{(m)}$ of $\Z^2 \times \Z$ and (possibly empty sets) $F_1^{(m)}, F_2^{(m)}, E^{(m)} \subset G_0$ for $m=1,\dots,M$ such that the question of whether there exist  functions $g_1,g_2 \colon \Z^2 \times \Z \to G_0$ that solve
the system of functional equations
\begin{equation}\label{func-eq'}
\biguplus_{h_1 \in H_1^{(m)}} (F_1^{(m)} + g_1(n+h_1)) \uplus
\biguplus_{h_2 \in H_2^{(m)}} (F_2^{(m)} + g_2(n+h_2)) = E^{(m)}
\end{equation}
for all $n \in \Z^2\times \Z$ and $m=1,\dots,M$ is undecidable (when expressed as a first-order sentence in ZFC).
\end{theorem}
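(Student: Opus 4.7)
The plan is to mirror the argument of Section \ref{hamming-sec}, which derived \thmref{main-function} from \thmref{main-hamming}, but with the factor $\Z_2$ replaced by $\Z$. Taking the data $N, D, M, h_1^{(m)}, h_2^{(m)}, F_1^{(m)}, F_2^{(m)}, E^{(m)}$ from \thmref{main-hamming}, I would set $G_0 \coloneqq \Z_N^D$ and write elements of $\Z^2 \times \Z$ as $(n,s)$ with $n \in \Z^2$, $s \in \Z$. Letting $\pi_d \colon \Z_N^D \to \Z_N$ denote the coordinate projections, I would then consider, for unknown $g_1,g_2 \colon \Z^2 \times \Z \to \Z_N^D$, the system consisting of
\begin{equation*}
\bigl(\pi_d^{-1}(\{0\}) + g_j(n,s)\bigr) \uplus \bigl(\pi_d^{-1}(\{0\}) + g_j(n,s+1)\bigr) = \pi_d^{-1}(\{-1,1\})
\end{equation*}
for $j=1,2$, $d=1,\dots,D$, and $(n,s) \in \Z^2 \times \Z$, together with
\begin{equation*}
\bigl(F_1^{(m)} + g_1((n,s)+(h_1^{(m)},0))\bigr) \uplus \bigl(F_2^{(m)} + g_2((n,s)+(h_2^{(m)},0))\bigr) = E^{(m)}
\end{equation*}
for $m=1,\dots,M$ and $(n,s) \in \Z^2 \times \Z$. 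Both lists are of the form \eqref{func-eq'} after packaging the shifts into finite subsets of $\Z^2 \times \Z$.

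The key claim to prove is that this system is solvable if and only if the system \eqref{func-eq-1} of \thmref{main-hamming} is solvable. In the forward direction, starting from $f_1,f_2 \colon \Z^2 \to \{-1,1\}^D$ that satisfy \eqref{func-eq-1}, I would define $g_j(n,s) \coloneqq (-1)^s f_j(n)$. The first family of equations holds since $\{x\} \uplus \{-x\} = \{-1,1\}$ exactly when $x \in \{-1,1\}$, and the second collapses to \eqref{func-eq-1} with $\epsilon = (-1)^s$. For the converse direction, applying $\pi_d$ to the first equation forces $\pi_d(g_j(n,s)) \in \{-1,1\}$ and $\pi_d(g_j(n,s+1)) = -\pi_d(g_j(n,s))$ for all $d$, hence $g_j(n,s) \in \{-1,1\}^D$ and $g_j(n,s+1) = -g_j(n,s)$. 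Iterating in $s$, this forces $g_j(n,s) = (-1)^s f_j(n)$ for $f_j(n) \coloneqq g_j(n,0) \in \{-1,1\}^D$, and evaluating the second equation at $s = 0$ and $s = 1$ recovers \eqref{func-eq-1} for both $\epsilon = +1$ and $\epsilon = -1$.

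Since the entire argument is formalizable in ZFC and is reversible in every universe of ZFC, the undecidability guaranteed by \thmref{main-hamming} transfers directly to the system above, giving \thmref{main-function'}.

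I do not anticipate any substantive obstacle: the only point that requires care is checking that the sign-flipping recursion $g_j(n,s+1) = -g_j(n,s)$ propagates consistently over all of $\Z$. In contrast to the $\Z_2$ case, where one needed $g_j(n,0) = -g_j(n,1)$ and $g_j(n,1) = -g_j(n,0)$ to be compatible cyclically, the extension to $\Z$ is in fact slightly easier because $\Z$ is freely generated and no such closure condition arises; the formula $g_j(n,s) = (-1)^s f_j(n)$ is well-defined globally from the datum $f_j(n) = g_j(n,0)$. Everything else is a routine translation of the proof already given in Section \ref{hamming-sec}.
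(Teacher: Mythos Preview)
Your proposal is correct and follows essentially the same approach as the paper: both set up the same system \eqref{sys-1'}, \eqref{sys-2'} on $\Z^2\times\Z$ and show its solvability is equivalent to that of \eqref{func-eq-1}. The only cosmetic difference is that the paper factors the equivalence through the $\Z^2\times\Z_2$ system \eqref{sys-1}, \eqref{sys-2} (passing via $g_j(n,z)=\tilde f_j(n,z\bmod 2)$ and $\tilde f_j(n,t)=(-1)^t g_j(n,0)$, then quoting Section~\ref{hamming-sec}), whereas you collapse the two steps and argue the sign-flip recursion $g_j(n,s+1)=-g_j(n,s)$ directly; the content is the same.
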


We can now prove this theorem, and hence Theorem \ref{main'}, using Theorem \ref{main-hamming}:

\begin{proof}  We repeat the arguments from Section \ref{hamming-sec}.  Let $N$, $D$, $M$, $h_1^{(m)}$, $h_2^{(m)}$, $F_1^{(m)}$, $F_2^{(m)}$, $E^{(m)}$ be as in  \thmref{main-hamming}.  We recall the systems \eqref{sys-1}, \eqref{sys-2} of functional equations, introduced in \secref{hamming-sec}.

As before, for each $d=1,\dots,D$, let $\pi_d \colon \Z_N^D \to \Z_N$ denote the $d^{\mathrm{th}}$ coordinate projection.
We write elements of $\Z^2 \times \Z_2$ as $(n,t)$ with $n \in \Z^2$ and $t \in \Z_2$ and   elements of $\Z^2 \times \Z$ as $(n,z)$ with $n \in \Z^2$ and $z \in \Z$.

For a pair of functions $ g_1, g_2 \colon \Z^2 \times \Z \to \Z_N^D$, consider the system of functional equations
\begin{equation}\label{sys-1'}
\left(\pi_d^{-1}(\{0\}) +  g_j(n,z)\right) \uplus 
\left(\pi_d^{-1}(\{0\}) +  g_j(n,z+1)\right) = \pi_d^{-1}(\{-1,1\}) 
\end{equation}
for $d=1,\dots,D$ and $j=1,2$, as well as the equations
\begin{equation}\label{sys-2'}
\left(F_1^{(m)} +  g_1((n,z)+(h_1^{(m)},0))\right) \uplus
\left(F_2^{(m)} +  g_2((n,z)+(h_2^{(m)},0))\right) = E^{(m)}
\end{equation}
for $m=1,\dots,M$.  It will suffice to establish (using an argument valid in every universe of ZFC) the equivalence of the following two claims:

\begin{itemize}
\item[(i)]  There exist functions $\tilde f_1, \tilde f_2 \colon \Z^2 \times \Z_2 \to \Z_N^D$ solving the systems \eqref{sys-1}, \eqref{sys-2}.
\item[(ii)] There exist functions $ g_1,  g_2 \colon \Z^2 \times \Z \to \Z_N^D$ solving the systems \eqref{sys-1'}, \eqref{sys-2'}.
 \end{itemize}
 
 Indeed, if (i) is equivalent to (ii), by \secref{hamming-sec}, (ii) is equivalent to the existence of functions $f_1,f_2\colon \Z^2\to \{-1,1\}^D$ solving the system \eqref{func-eq-1}. Hence \thmref{main-hamming} implies \thmref{main-function'}.
 
 It  therefore remains to show that (i) and (ii) are equivalent.
 Suppose first that $\tilde f_1, \tilde f_2 \colon \Z^2 \times \Z_2 \to \Z_N^D$ solve the systems \eqref{sys-1}, \eqref{sys-2}. Then we can define $ g_1,  g_2 \colon \Z^2 \times \Z \to \Z_N^D$
 \begin{equation*}
     g_j(n,z)=\tilde f_j(n, z\mod 2),\quad j=1,2
 \end{equation*}
 which solve systems \eqref{sys-1'}, \eqref{sys-2'}.
 Conversely, if  $ g_1,  g_2 \colon \Z^2 \times \Z \to \Z_N^D$ solve the systems \eqref{sys-1'}, \eqref{sys-2'}, then the functions $\tilde f_1, \tilde f_2 \colon \Z^2 \times \Z_2 \to \Z_N^D$ defined by
$$
\tilde f_j(n,t) = (-1)^t g_j(n,0)
$$
solve the systems \eqref{sys-1}, \eqref{sys-2}. The claim  therefore follows.
\end{proof}

\section{Single tile versus multiple tiles}\label{single-multi}

In this section we continue the comparison between tiling equations for a single tile $J=1$, and for multiple tiles $J>1$.  In the introduction we have already mentioned the ``dilation lemma'' \cite[Proposition 3.1]{BH}, \cite[Lemma 3.1]{GT}, \cite{tij} that is a feature of tilings of a single tile $F$ that has no analogue for tilings of multiple tiles $F_1,\dots,F_J$.  Another distinction can be seen by taking the Fourier transform.  For simplicity let us consider a tiling equation of the form $\Tile(F_1,\dots,F_J; \Z^D)$.  In terms of convolutions, this equation can be written as
$$ \one_{A_1} * \one_{F_1} + \dots + \one_{A_J} * \one_{F_J} = 1.$$
Taking distributional Fourier transforms, one obtains (formally, at least)
$$ \ft{\one_{A_1}} \ft{\one_{F_1}} + \dots + \ft{\one_{A_J}} \ft{\one_{F_J}} = \delta$$
where $\delta$ is the Dirac distribution.  When $J>1$, this equation reveals little about the support properties of the distributions $\ft{\one_{A_j}}$.  But when $J=1$, the above equation becomes
$$ \ft{\one_{A}} \ft{\one_{F}} = \delta$$
which now provides significant structural information about the Fourier transform of $\one_A$; in particular this Fourier transform is supported in the union of $\{0\}$ and the zero set of $\ft{\one_F}$ (which is a trigonometric polynomial).  
Such information is consistent with the known structural theorems about tiling sets arising from a single tile; see e.g., \cite[Remark 1.8]{GT}.  Such a rich structural theory does not seem to be present when $J \geq 2$.

Now we present a further structural property of tilings of one tile that is not present for tilings of two or more tiles, which we call a ``swapping property''.  We will only state and prove this property for one-dimensional tilings, but it is conceivable that analogues of this result exist in higher dimensions.

\begin{theorem}[Swapping property]\label{Swap}  Let $G_0$ be a finite abelian group, and for any integers $a, b$ we write
$$[[a]] \coloneqq \{a\} \times G_0 \subset \Z \times G_0$$
and
$$[[a,b]] \coloneqq \{n \in \Z: a \leq n \leq b \} \times G_0 \subset \Z \times G_0.$$
Let $A^{(0)}, A^{(1)}$ be subsets of $\Z \times G_0$ which agree on the left in the sense that 
 $$A^{(0)} \cap [[n]] =  A^{(1)} \cap [[n]]$$
whenever $n \leq -n_0$ for some $n_0$.  Suppose also that there is a finite subset $F$ of $\Z \times G_0$ such that
\begin{equation}\label{a0-a1}
A^{(0)} \oplus F = A^{(1)} \oplus F.
\end{equation}
Then we also have
$$ A^{(\omega)} \oplus F = A^{(0)}
\oplus F$$
for any function $\omega \colon \Z \to \{0,1\}$, where 
\begin{equation}\label{a-omega}
A^{(\omega)} \coloneqq \bigcup_{n \in \Z} A^{(\omega(n))} \cap [[n]]
\end{equation}
is a subset of $\Z \times G_0$ formed by mixing together the fibers of $A^{(0)}$ and $ A^{(1)}$.
\end{theorem}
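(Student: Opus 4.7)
The plan is to reduce the theorem to a structural claim about anchor positions, which I then establish by an extremal argument using the leftmost column of $F$. For each $x$ in the common set $E := A^{(0)} \oplus F = A^{(1)} \oplus F$, let $\phi_i(x) \in A^{(i)}$ be the unique anchor with $x \in \phi_i(x) + F$ (well-defined since the $\oplus$-sums are disjoint). The key claim is that $\phi_0(x)$ and $\phi_1(x)$ always lie in the same fiber $[[n]] = \{n\} \times G_0$. Granted this, for any $\omega$ and any $x \in E$ with common anchor-fiber $n(x)$, exactly one of $\phi_0(x), \phi_1(x)$ belongs to $A^{(\omega)}$ (the one selected by $\omega(n(x))$, with both coinciding if $\phi_0(x)=\phi_1(x)$), providing a unique disjoint representation $x = a + f$ in $A^{(\omega)} + F$; and any $(a,f) \in A^{(\omega)} \times F$ has $a$ in $A^{(0)}$ or $A^{(1)}$, so $a + f \in (A^{(0)} \oplus F) \cup (A^{(1)} \oplus F) = E$. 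Thus $A^{(\omega)} \oplus F$ is well-defined and equals $E$.

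To prove the structural claim, set $P = A^{(0)} \setminus A^{(1)}$ and $Q = A^{(1)} \setminus A^{(0)}$; left-agreement places both in fibers $n > -n_0$. Whenever $\phi_0(x) \neq \phi_1(x)$, these anchors must lie in $P$ and $Q$ respectively, since a common anchor in $A^{(0)} \cap A^{(1)}$ would be the unique anchor in both tilings and hence force $\phi_0 = \phi_1$. Call $x$ \emph{bad} if additionally $\phi_0(x), \phi_1(x)$ lie in different fibers; suppose for contradiction that bad points exist, and set $n^* = \min_{x \text{ bad}} \min(n(\phi_0(x)), n(\phi_1(x)))$, which is a well-defined integer by the lower bound from left-agreement. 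Let $x_*$ attain this minimum, with WLOG $n(\phi_0(x_*)) = n^*$, and write $\alpha = \phi_0(x_*) \in P_{n^*}$. Letting $k_{\min}$ denote the leftmost column of $F$, examine, for each $f$ at column $k_{\min}$, the point $y = \alpha + f \in E$: its $A^{(0)}$-anchor is $\alpha$ at fiber $n^*$, while $\phi_1(y) = y - f'$ for some $f' \in F$ has fiber $n^* + k_{\min} - k(f') \le n^*$. A strictly smaller fiber would make $y$ itself bad with $\min$-fiber below $n^*$, violating minimality; hence $\phi_1(y) \in Q_{n^*}$ for every leftmost-column $f$.

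The hard part will be closing the argument when $\alpha$'s badness arises only through $f$'s at columns $k > k_{\min}$, sending $\phi_1(\alpha + f)$ strictly to the right of $n^*$ for some such $f$; this is the case the leftmost analysis alone does not rule out. My plan is to pair the leftmost analysis with a symmetric rightmost one, defining $n^{**} = \max_{x \text{ bad}} \max(n(\phi_0(x)), n(\phi_1(x)))$ and rerunning the argument with $F_{k_{\max}}$ in place of $F_{k_{\min}}$, which yields a dual column-constraint $k(f') > k(f)$ on the swap-partner $f'$ of each bad $f$ at the rightmost extremal fiber; this collides with the leftmost-derived $k(f') < k(f)$ whenever the two extrema coincide. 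The general multi-fiber case (including unbounded $P, Q$) is then handled by a cascading induction that peels off bad points fiber-by-fiber from the extremes inward, using the fiber-balance $|P_n| = |Q_n|$ (which follows by induction on $n$ from left-agreement and the counting identity $\sum_k |B^{(i)}_{m-k}| \cdot |F_k| = |E_m|$, where $B^{(i)}_n$ denotes the $G_0$-fiber of $A^{(i)}$ at $n$) to keep the inductive steps aligned.
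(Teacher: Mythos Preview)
Your reduction to the structural claim is correct and clean; indeed the claim is equivalent to the theorem (if it failed at some $x$ with $\phi_0(x)$ at fiber $n$ and $\phi_1(x)$ at fiber $n'\neq n$, choosing $\omega(n)=1$, $\omega(n')=0$ removes both anchors from $A^{(\omega)}$, so $x$ is uncovered). The fiber-balance $|P_n|=|Q_n|$ is also right. But your extremal argument for the structural claim does not close. The leftmost analysis shows only that $\alpha+f$ is not bad when $f$ sits in the leftmost column of $F$; your bad point $x_*=\alpha+f_*$ may perfectly well have $f_*$ in a later column, and nothing you have written contradicts that. Your proposed remedy via a rightmost extremum $n^{**}$ is unavailable in general: only left-agreement is assumed, so $P,Q$ can be unbounded to the right and $n^{**}$ need not exist. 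The ``cascading induction that peels off bad points fiber-by-fiber from the extremes inward'' is a plan, not an argument --- no inductive hypothesis is stated, and you give no mechanism by which the one-sided extremal bound together with $|P_n|=|Q_n|$ actually propagates inward.

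There is also a structural warning sign: your argument nowhere uses that $G_0$ is abelian. The paper's proof is Fourier-analytic on $G_0$: setting $f_n=\one_{A^{(1)}_n}-\one_{A^{(0)}_n}$, the hypothesis becomes $\sum_l \hat f_{n-l}(\xi)\,\hat\one_{F_l}(\xi)=0$ for every character $\xi$, and the left-vanishing of the $f_n$ then forces, frequency by frequency, the dichotomy that either all $\hat f_n(\xi)$ vanish or all $\hat\one_{F_l}(\xi)$ vanish --- and this dichotomy is precisely what allows one to insert an arbitrary factor $\omega(n-l)$ into the convolution for free. The paper explicitly flags this abelian input as the feature distinguishing single-tile tilings from multi-tile and nonabelian ones, so a purely combinatorial proof that ignores it would be surprising; at minimum it would need to be written out in full before one could assess it.
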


\begin{proof}  
For any $n \in \Z$ and $j=0,1$, we define the slices $A^{(j)}_n, F_n \subset G$ by the formulae
$$ A^{(j)}_n \coloneqq \{ x \in G_0: (n,x) \in A^{(j)} \}$$
and
$$ F_n \coloneqq \{ x \in G_0: (n,x) \in F \}.$$
By inspecting the intersection (or ``slice'') of \eqref{a0-a1} at $[[n]]$ for some integer $n$, we see that
$$
\biguplus_{l \in \Z} A^{(0)}_{n-l} \oplus F_l =  \biguplus_{l \in \Z} A^{(1)}_{n-l} \oplus F_l.
$$
(Note that all but finitely many of the terms in these disjoint unions are empty.)
In terms of convolutions on the finite abelian group $G_0$, this becomes
$$
\sum_{l \in \Z} \one_{A^{(0)}_{n-l}} * \one_{F_l}(x) =  \sum_{l \in \Z} \one_{A^{(1)}_{n-l}} * \one_{F_l}(x)
$$
for all $n \in \Z$ and $x \in G_0$.
If one now introduces the functions $f_n \colon G_0 \to \C$ for $n \in \Z$ by the formula
$$ f_n \coloneqq \one_{A^{(1)}_n} - \one_{A^{(0)}_n}$$
then by hypothesis $f_n$ vanishes for $n \leq n_0$, and also
\begin{equation}\label{fan}
\sum_{l \in \Z} f_{n-l} * \one_{F_l}(x) = 0
\end{equation}
for every $n \in \Z$ and $x \in G$.  

To analyze this equation we perform Fourier analysis on the finite abelian group $G_0$.  Let $\hat G_0$ be the Pontryagin dual of $G_0$, that is to say the group of homomorphisms $\xi \colon x \mapsto \xi \cdot x$ from $G_0$ to $\R/\Z$.  For any function $f \colon G_0 \to \C$, we define the Fourier transform $\hat f(\xi) \colon \hat G_0 \to \C$ by the formula
$$ \ft{f}(\xi) \coloneqq \sum_{x \in G_0} f(x) e^{-2\pi i \xi\cdot x}.$$
Applying this Fourier transform to \eqref{fan}, we conclude that
\begin{equation}\label{job}
 \sum_{l \in \Z} \ft{f_{n-l}}(\xi) \ft{\one_{F_l}}(\xi) = 0
 \end{equation}
for all $n \in \Z$ and $\xi \in \hat G_0$.

Suppose $\xi \in \hat G_0$ is such that $\ft{\one_{F_l}}(\xi)$ is non-zero for at least one integer $l$.  Let $l_\xi$ be the smallest integer with $\ft{\one_{F_{l_\xi}}}(\xi) \neq 0$, then we can rearrange \eqref{job} as
$$ \ft{f_n}(\xi) = -  \sum_{l=1}^\infty \frac{\ft{\one_{F_{l_\xi+l}}}(\xi)}{\ft{\one_{F_{l_\xi}}}(\xi)} \hat{f_{n-l}}(\xi)$$
for all integers $n$.  Since $\ft{f_n}(\xi)$ vanishes for all $n \leq n_0$, we conclude from induction that $\ft{f_n}(\xi)$ in fact vanishes for all $n$.

To summarize so far, for any $\xi \in \hat G_0$, either $\ft{\one_{F_l}}(\xi)$ vanishes for all $l$, or else $\ft{f_n}(\xi)$ vanishes for all $n$.  In either case, we see that we can generalize \eqref{job} to
$$ \sum_{l \in \Z} \omega(n-l) \ft{f_{n-l}}(\xi) \ft{\one_{F_l}}(\xi) = 0$$
for all $n \in \Z$ and $\xi \in \hat G_0$.  Inverting the Fourier transform, this is equivalent to
$$ \sum_{l \in \Z} \omega(n-l) f_{n-l} * \one_{F_l}(x) = 0$$
for all $n \in \Z$ and $x \in G_0$, which is in turn equivalent to
$$
\sum_{l \in \Z} \one_{A^{(0)}_{n-l}} * \one_{F_l}(x) =  \sum_{l \in \Z} \one_{A^{(\omega(n-l))}_{n-l}} * \one_{F_l}(x)
$$
and hence
$$ \biguplus_{l \in \Z} A^{(0)}_{n-l} \oplus F_l = 
 \biguplus_{l \in \Z} A^{(\omega(n-l))}_{n-l} \oplus F_l$$
 for all $n \in \Z$.  This gives \eqref{a-omega} as desired.
\end{proof}

\begin{example}  Let $G_0 = \Z_2$, $F = \{0\} \times \Z_2$, and let
$$ A^{(j)} \coloneqq \{ (n, a^{(j)}(n)): n \in \Z\}$$
for $j=0,1$, where $a^{(0)}, a^{(1)} \colon \Z \to \Z_2$ are two functions that agree at negative integers.  Then we have $A^{(0)} \oplus F = A^{(1)} \oplus F = \Z \times G_0$.  Furthermore, for any $\omega \colon \Z \to \{0,1\}$, the set
$$ A^{(\omega)} \coloneqq \{ (n, a^{(\omega)}(n)): n \in \Z\}$$
satisfies the same tiling equation:
$$ A^{(\omega)} \oplus F = A^{(0)} \oplus F = A^{(1)} \oplus F = \Z \times G_0.$$
\end{example}

\begin{example}  Let $G_0 = \Z_2$, $F = \{ (0,0), (1,1)\}$, and let
$$ A^{(j)} \coloneqq \{ (n, j): n \in \Z\}$$
for $j=0,1$.  Then, as in the previous example, we have $A^{(0)} \oplus F = A^{(1)} \oplus F = \Z \times G_0$.  But for any non-constant function $\omega \colon \Z \to \{0,1\}$, the set
$$ A^{(\omega)} \coloneqq \{ (n, a^{(\omega)}(n)): n \in \Z\}$$
will \emph{not} obey the same tiling equation:
$$ A^{(\omega)} \oplus F \neq A^{(0)} \oplus F = A^{(1)} \oplus F = \Z \times G_0.$$
The problem here is that $A^{(0)}, A^{(1)}$ do not agree to the left.  Thus we see that this hypothesis is necessary for the theorem to hold.
\end{example}

Informally, Theorem \ref{Swap} asserts that if $E \subset \Z \times G_0$ for a finite abelian group $G_0$ and $F$ is a finite subset of $\Z \times G_0$, then the solution space $\Tile(F;E)_\Universe$ to the tiling equation $\Tile(F;E)$ has the following ``swapping property'': any two solutions in this space that agree on one side can interchange their fibers arbitrarily and remain in the space.  This is quite a strong property that is not shared by many other types of equations.  Consider for instance the simple equation
\begin{equation}\label{fg}
f_2(n+1) = -f_1(n)
\end{equation}
constraining two Boolean functions $f_1,f_2 \colon \Z \to \{-1,1\}$; this is a specific case of the equation \eqref{func-eq-3}. We observe that this equation does \emph{not} obey the swapping property.  Indeed, consider the two solutions $(f^{(0)}_1, f^{(0)}_2), (f^{(1)}_1, f^{(1)}_2)$ to \eqref{fg} given the formula 
$$ f^{(i)}_j(n) = (-1)^{1_{n > i+j}}$$
for $i=0,1$ and $j=1,2$.  These two solutions agree on the left, but for a given function $\omega \colon \Z \to \{0,1\}$, the swapped functions
$$ f^{(\omega)}_j(n) = (-1)^{1_{n > \omega(n)+j}}$$
only obeys \eqref{fg} when $\omega(1)=\omega(2)$.    Because of this, unless the equations \eqref{fg} are either trivial or do not admit any two different solutions that agree on one side, it does not seem possible to encode individual constraints such as \eqref{fg} inside tiling equations $\Tile(F;E)$ involving a single tile $F$,  at least in one dimension.
As such constraints are an important component of our arguments, it does not seem particularly easy to adapt our methods to construct undecidable or aperiodic tiling equations for a single tile. We remark that in the very special case of \textit{deterministic} tiling equations,  such as the aperiodic tiling equations that encode the construction of Kari in \cite{kari},  this obstruction is not present, for then if two solutions to \eqref{fg}   agree on one side, they must agree everywhere\footnote{For extensive studies of deterministic configurations see \cite{k92,kp,Lukk}. We thank Jarkko Kari for providing us with these references.}.  So it may still be possible to encode such equations inside tiling equations that consist of  one tile.

However, as was shown in the previous sections, we can encode \emph{any} system of equations of the type \eqref{fg} in a system of  tiling equations  involving more than one tile. 

\begin{example}  In the group $\Z \times \Z_4$, the solutions to the system of tiling equations
$$ \Tile( \{(0,0), (0,2)\}, \{(0,1), (0,3)\}; \Z \times \Z_4 );
 \Tile( \{(0,0)\}, \{(-1,0)\}; \Z \times \{-1,1\} ) $$
 can be shown to be given precisely by sets $A_1,A_2 \subset \Z \times \Z_4$ of the form
 $$ A_j = \{ (n, f_j(n)): n \in \Z\}$$
 for $j=1,2$ and functions $f_1,f_2 \colon \Z \to \{-1,1\}$ solving \eqref{fg}.  The above discussion then provides a counterexample that demonstrates that Theorem \ref{Swap} fails when working with a pair of tiles $F_1,F_2$ rather than a single tile.
 \end{example} 
 
The obstruction provided by Theorem \ref{Swap} relies crucially on the abelian nature of $G_0$ (in order to utilize the Fourier transform), suggesting that this obstruction is not present in the nonabelian setting.  This suggestion is validated by the results in Section \ref{nonab-sec} below.

\section{A nonabelian analogue}\label{nonab-sec}

In this section we give an analogue of Theorem \ref{main} in which we are able to use just one tile instead of two, at the cost of making the group $G$ somewhat nonabelian.  The argument will share several features in common with the proof of Theorem \ref{main}, in particular both arguments will rely on Theorem \ref{main-linear} as a source of undecidability (see Figure \ref{fig:logic}).

In order to maintain compatibility with the notation of the rest of the paper we will continue to write nonabelian groups $G$ in additive notation $G = (G,+)$.  Thus, we caution that \textbf{in this section the addition operation $+$ (or $\oplus$) is not necessarily commutative}. 

\begin{example}[Nonabelian additive notation for permutations] Consider the finite group $S_{\Z_4^2} \equiv S_{16}$, the group of permutations $\alpha \colon \Z_4^2 \to \Z_4^2$ on the order $16$ abelian group $\Z_4^2$; this group will play a key role in the constructions of this section.  With our additive notation for groups, we have
$$ \alpha + \beta = \alpha \circ \beta$$
for $\alpha,\beta \in S_{\Z_4^2}$, with $0$ denoting the identity permutation,  $m\alpha$ denoting the composition of $m$ copies of $\alpha$, and $-\alpha$ denoting the inverse of $\alpha$.
\end{example}

The notion of a periodic set continues to make sense for subsets of nonabelian groups (note that every finite index subgroup of $G$ contains a finite index normal subgroup), as does the notation of a tiling equation $\Tile(F;E)$.  Our main result is

\begin{theorem}[Undecidable nonabelian tiling with one tile]\label{onetile}  There exists a group $G$ of the form $G = \Z^2 \times S_{\Z_4^2}^{D} \times G_0$ for some standard natural number $D$ and explicit finite abelian group $G_0$, a finite non-empty subset $F$ of $G$, and a finite non-empty subset $E_0$ of $S_{\Z_4^2}^{D} \times G_0$, such that the nonabelian tiling equation $\Tile(F;\Z^2 \times E_0)$ is undecidable.
\end{theorem}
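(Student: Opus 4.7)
The plan is to derive Theorem \ref{onetile} directly from the undecidable system of Theorem \ref{main-linear}, paralleling the chain of reductions used for Theorem \ref{main} but exploiting nonabelianness to collapse what were two tiles into a single one. As noted in Remark \ref{nonab} and the remark after Theorem \ref{oned}, the combining apparatus of Theorem \ref{combine} and the function-encoding apparatus of Proposition \ref{tile-function} extend to nonabelian $G_0$, so only the ``two tiles into one'' step is genuinely new.

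First, I would fix a generator $e$ of $\Z_4^2$ and take $\sigma_{\varepsilon} \in S_{\Z_4^2}$ to be translation $x \mapsto x + \varepsilon e$ (for $\varepsilon \in \{-1,+1\}$), and $\tau \in S_{\Z_4^2}$ to be the involution $x \mapsto -x$. Since $\tau$ intertwines translations by $\varepsilon e$ and $-\varepsilon e$, one has the crucial identity $-\tau + \sigma_{\varepsilon} + \tau = \sigma_{-\varepsilon}$, while $\sigma_{+1}$ and $\sigma_{-1}$ commute with each other. The data $(f_{j,d}(n))_{j,d}$ from Theorem \ref{main-linear} would then be encoded as a single function $\phi \colon \Z^2 \to S_{\Z_4^2}^{2D} \times G_0$ (with $G_0$ an explicit finite abelian group used for bookkeeping), whose $(j,d)$-coordinate equals $\sigma_{f_{j,d}(n)}$.

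Second, the antipodal constraint $f_{2,d}(n + h_d) = - f_{1,d}(n)$ from \eqref{func-eq-3}---the obstruction that forced two tiles in the abelian setting via Theorem \ref{Swap}---is rewritten as the nonabelian \emph{local} identity
\[
\phi_{2,d}(n + h_d) = -\tau + \phi_{1,d}(n) + \tau,
\]
which couples only two nearby values of $\phi$ and is therefore enforceable by a single local tiling equation in the spirit of Example \ref{tilex} and Proposition \ref{tile-function}. The linear equations \eqref{func-eq-2} and the Hamming-cube restriction $\phi_{j,d}(n) \in \{\sigma_{-1}, \sigma_{+1}\}$ are encoded exactly as in Sections \ref{hamming-sec}--\ref{linear-sec}, operating inside the abelian subgroup $\langle \sigma_{+1} \rangle \cong \Z_4$ of each factor of $S_{\Z_4^2}$; these steps remain one-tile since they already involve only one ``layer'' of functions. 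I would then collect the resulting finite list of one-tile tiling equations for $\phi$ and apply the nonabelian version of Theorem \ref{combine} (valid by Remark \ref{nonab}) to stack them along an auxiliary cyclic factor $\Z_N$ inside $G_0$, producing a single tiling equation $\Tile(F; \Z^2 \times E_0)$ on $\Z^2 \times S_{\Z_4^2}^{2D} \times G_0$ whose solvability is equivalent to that of the given system and hence undecidable in ZFC.

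The main obstacle is verifying the \emph{rigidity} of the nonabelian antipodal encoding. In the abelian case the swapping phenomenon of Theorem \ref{Swap} shows that no single tile can force $f_2 = -f_1$, and the workaround here must use the noncommutativity $\sigma_{\varepsilon} + \tau \neq \tau + \sigma_{\varepsilon}$ essentially; one has to check that in every universe of ZFC any solution $\X$ of the final tile equation really does arise from a function $\phi$ satisfying the displayed conjugation identity, rather than some twisted or swapped variant. Carrying this out amounts to a careful case analysis of how the tile $F$ intersects the $S_{\Z_4^2}^{2D}$-fibers under left multiplication---the one step where the noncommutative structure of $S_{\Z_4^2}$ is genuinely indispensable, and where a version of the ``rigid tile'' idea from Lemma \ref{rigid-tile} may have to be adapted to the permutation-group setting to pin down the relevant cosets.
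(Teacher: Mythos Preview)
Your high-level plan---deduce the result from Theorem~\ref{main-linear} and then invoke the nonabelian version of Theorem~\ref{combine}---matches the paper's, but the crucial ``two tiles into one'' step is implemented quite differently, and your version has a gap.

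The paper does \emph{not} encode $f_{1,d}$ and $f_{2,d}$ in separate $S_{\Z_4^2}$ factors. Instead it encodes the \emph{pair} $y_{n,d} \coloneqq (f_{1,d}(n),f_{2,d}(n)) \in \{-1,1\}^2 \subset \Z_4^2$ inside a single factor, via the coordinate map $\pi\colon S_{\Z_4^2}\to\Z_4^2$, $\pi(\alpha)=\alpha^{-1}(0,0)$; the set attached to $(n,d)$ is the full fibre $\pi^{-1}(\{y_{n,d}\})$. The nonabelian device is then right multiplication by the \emph{reflection} $\rho(y_1,y_2)=(y_2,y_1)$, which satisfies $\pi(\alpha+\rho)=\rho(\pi(\alpha))$. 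This is a one-sided operation, so it fits directly into a tiling equation: a tile with two pieces $T_d$ (at shift $0$, using $\tau(K)$) and $T'_d$ (at shift $-h_d$, using $\rho+\tau(K)$) forces $(y_{n,d}+K)\uplus(\rho(y_{n+h_d,d})+K)=\{-1,1\}^2$, which is exactly $\{f_{1,d}(n)\}\uplus\{f_{2,d}(n+h_d)\}=\{-1,1\}$. The rigidity---that the solution sets are precisely the fibres $\pi^{-1}(\{y\})$---is the content of the key Lemma~\ref{tiling-system}, proved by a delicate argument with cycles and the stabilizer $\mathrm{Stab}(\{-1,1\}^2)$.

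Your proposal instead rewrites the antipodal constraint as conjugation $\phi_{2,d}(n+h_d) = -\tau+\phi_{1,d}(n)+\tau$. But conjugation is a \emph{two-sided} operation, whereas in a tiling equation $A\oplus F=E$ the tile acts only on one side; simply observing that $\sigma_{-\varepsilon}$ is conjugate to $\sigma_\varepsilon$ does not by itself produce a single-tile encoding, and your reference to Example~\ref{tilex} concerns constraints of the form $f(n+h)=-f(n)$ on a \emph{single} coordinate, not a cross-coordinate relation. More seriously, you need $\phi_{j,d}(n)$ to lie in the two-element set $\{\sigma_{-1},\sigma_{+1}\}$ inside the enormous group $S_{\Z_4^2}$; the abelian Hamming-cube tricks of Sections~\ref{hamming-sec}--\ref{linear-sec} only constrain values within an ambient abelian group, not within a specified cyclic subgroup of a nonabelian one, and Lemma~\ref{rigid-tile} (bumpy rectangles in $\Z^k$) has no evident analogue here. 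The paper's Lemma~\ref{tiling-system} is exactly the missing rigidity ingredient, and it is genuinely new rather than an adaptation of earlier lemmas. (A small side issue: $\Z_4^2$ is not cyclic, so it has no generator $e$.)
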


We will derive this result from Theorem \ref{main-linear} and some additional preparatory results. The main new idea is to encode the Hamming cube $\{-1,1\}^2$ as the solution to a system of tiling equations involving only a single tile in $S_{\Z_4^2}$.  The use of this group $S_{\Z_4^2}$  is ultimately in order to be able to access the reflection permutation $\rho \in \Z_4^2$, which will play a crucial role in encoding the equation \eqref{func-eq-3} using only one tile rather than two.

We first need some additional notation, which we summarize in Figure \ref{fig:group}.  

\begin{definition}[Notation relating to $S_{\Z_4^2}$ and $\Z_4^2$]\ 
\begin{itemize}
\item[(i)]  We let $\rho \in S_{\Z_4^2}$ denote the reflection permutation $\rho(y_1,y_2) \coloneqq (y_2,y_1)$.
\item[(ii)] We define the regular representation $\tau \colon \Z_4^2\to S_{\Z_4^2}$ by the formula
$$ \tau(h)(x) \coloneqq x-h$$
for $h,x \in \Z_4^2$.
\item[(iii)] We define the coordinate function $\pi \colon S_{\Z_4^2} \to \Z_4^2$ by
$$ \pi(\alpha) \coloneqq \alpha^{-1}(0,0),$$
and observe that
\begin{equation}\label{pi-translate}
\pi(\alpha + \beta) = \beta^{-1}(\pi(\alpha))
\end{equation}
for $\alpha,\beta \in S_{\Z_4^2}$.  In particular we have
\begin{equation}\label{pi-regular}
\pi(\alpha + \tau(h)) = \pi(\alpha) + h
\end{equation}
for all $\alpha \in S_{\Z_4^2}$ and $h \in \Z_4^2$.  
\item[(iv)]  We view the Hamming cube $\{-1,1\}^2$ as a coset of the subgroup $(2\Z_4)^2$ in $\Z_4^2$, where $2\Z_4 = \{ 0 \hbox{ mod } 4, 2 \hbox{ mod } 4\}$ is the order two subgroup of $\Z_4$.  We let $B \subset S_{\Z_4^2}$ denote the set 
\begin{equation}\label{B-def}
    B \coloneqq \pi^{-1}(\{-1,1\}^2),
\end{equation} 
and let $K$ be the order two subgroup of $(2\Z_4)^2$ defined by
$$ K \coloneqq \{ (0,0), (0,2) \}.$$
\item[(v)]  A \emph{cycle} in the permutation group $S_{\Z_4^2}$ is a permutation $\sigma \colon \Z_4^2 \to \Z_4^2$ such that there is an enumeration $\alpha_1,\dots,\alpha_{16}$ of $\Z_4^2$ such that $\sigma(\alpha_i) = \alpha_{i+1}$ for all $i=1,\dots,16$ (with the periodic convention $\alpha_{17}=\alpha_1$).  Note that any such cycle generates a cyclic subgroup $\{0, \sigma, 2\sigma, \dots, 15 \sigma\}$ of $S_{\Z_4^2}$ of order $16$.
\item[(vi)] We let $\mathrm{Stab}(\{-1,1\}^2) \equiv S_{12}$ denote the stabilizer group of $\{-1,1\}^2$, that is to say the subgroup of $S_{\Z_4^2}$ consisting of those permutations that act trivially on the Hamming cube $\{-1,1\}^2$.
\end{itemize}
\end{definition}

\begin{figure}
    \centering
    \begin{tikzcd}
&&& B \arrow[r,"\pi",dashed,twoheadrightarrow] \arrow[d,hookrightarrow,dashed] & \{-1,1\}^2 \arrow[loop, in=120,out=60,looseness=2, "\rho"', dashed] \arrow[d,hookrightarrow,dashed] \\
K \arrow[r,hookrightarrow] & (2\Z_4)^2 \arrow[loop, in=120,out=60,looseness=2, "\rho"'] \arrow[r, hookrightarrow] & \Z_4^2 \arrow[loop, looseness=2, in=120,out=60, "\rho"'] \arrow[r, hookrightarrow, "\tau"] & S_{\Z_4^2} \arrow[r, "\pi",dashed,twoheadrightarrow] & \Z_4^2 \arrow[loop,in=240,out=300,looseness=3, "\rho"] \\
& & & \mathrm{Stab}(\{-1,1\}^2) \arrow[u, hookrightarrow]
\end{tikzcd}
    \caption{Maps between various subgroups (or subsets) of $S_{\Z_4^2}$ and $\Z_4^2$.  Solid arrows denote group homomorphisms; hooked arrows denote injections; double-headed arrows denote surjections; and unlabeled hooked arrows denote inclusions.}
    \label{fig:group}
\end{figure}
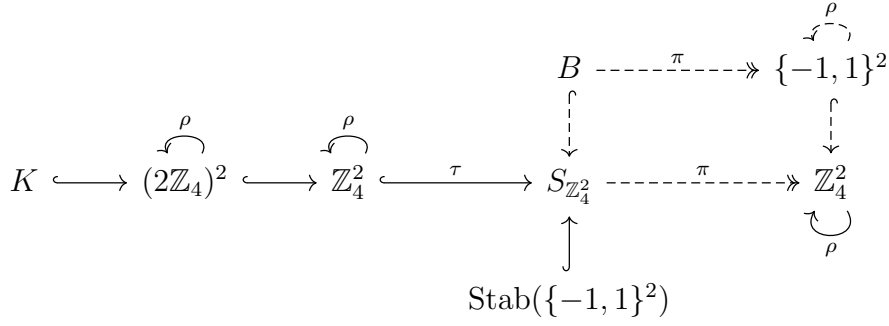

We can now state our preliminary encoding lemma.

\begin{lemma}[Encoding $\{-1,1\}^2$ as a system of tiling equations in $S_{\Z_4^2}$]\label{tiling-system}  Let $A$ be a subset of $S_{\Z_4^2}$.  Then the following are equivalent:
\begin{itemize}
\item[(i)] $A$ is of the form
\begin{equation}\label{aok}
A = \pi^{-1}(\{y\}) = \{ \alpha \in S_{\Z_4^2}: \pi(\alpha) = y \}
\end{equation}
for some $y \in \{-1,1\}^2$.
\item[(ii)]  $A$ obeys the tiling equation
\begin{equation}\label{a-tau}
\Tile( \tau((2\Z_4)^2) ; B )
\end{equation}
 as well as the tiling equations
\begin{equation}\label{a-cycle}
\Tile( \{ \phi, \sigma, 2\sigma, \dots, 15 \sigma\} ; S_{\Z_4^2} )
\end{equation}
for every cycle $\sigma \in S_{\Z_4^2}$ and every $\phi \in \mathrm{Stab}(\{-1,1\}^2)$.
\end{itemize}
\end{lemma}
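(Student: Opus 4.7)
The approach is to verify the two directions separately, with $(i) \Rightarrow (ii)$ being a direct computation and $(ii) \Rightarrow (i)$ proceeding by a sequence of structural deductions.

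For $(i) \Rightarrow (ii)$, I will suppose $A = \pi^{-1}(\{y\})$ and verify the tilings by direct computation. Equation \eqref{a-tau} will follow from \eqref{pi-regular}, which gives $\pi(\alpha + \tau(h)) = y + h$; as $h$ ranges through the subgroup $(2\Z_4)^2$, the image $y+h$ runs bijectively through the coset $\{-1,1\}^2$, and a cardinality count ($|A|\cdot 4 = 4 \cdot 15! = |B|$) upgrades injectivity to equality $A \oplus \tau((2\Z_4)^2) = B$.  For \eqref{a-cycle}, the identity \eqref{pi-translate} gives $\pi(\alpha + i\sigma) = \sigma^{-i}(\pi(\alpha))$; since $\sigma$ is a $16$-cycle on $\Z_4^2$, for every $\beta \in S_{\Z_4^2}$ there is a unique $i \in \{0,\dots,15\}$ with $\sigma^i(\pi(\beta)) = y$, which gives $A \oplus \langle \sigma \rangle = S_{\Z_4^2}$.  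Because every $\phi \in \mathrm{Stab}(\{-1,1\}^2)$ fixes $y$, we have $A + \phi = A$, so substituting $\phi$ for the identity in the tile $\langle \sigma \rangle$ preserves the tiling.

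For $(ii) \Rightarrow (i)$, I will first extract the basic structure.  Taking $\phi = 0 \in \mathrm{Stab}(\{-1,1\}^2)$ in \eqref{a-cycle} yields $A \oplus \langle \sigma \rangle = S_{\Z_4^2}$ for every $16$-cycle $\sigma$, so in particular $A$ is a right-transversal of $\langle \sigma \rangle$.  Comparing \eqref{a-cycle} for a general $\phi$ with the case $\phi = 0$ and cancelling the common translates $A + i\sigma$ for $i = 1,\dots,15$ forces $A + \phi = A$, yielding right-invariance of $A$ under $\mathrm{Stab}(\{-1,1\}^2)$.  Equation \eqref{a-tau} with $h = 0$ gives $A \subset B$, and the disjointness implicit in $\oplus$ gives $A \cap (A + \tau(k)) = \emptyset$ for every $k \in (2\Z_4)^2 \setminus \{0\}$.

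Next, I will strengthen this disjointness to all of $\Z_4^2 \setminus \{0\}$.  For every $h \in \Z_4^2$ of order $4$, one can construct a $16$-cycle $\sigma$ with $\sigma^4 = \tau(h)$ by enumerating the four $4$-cycles $C_1,\dots,C_4$ of $\tau(h)$ and setting $\sigma(c_i^{(j)}) := c_{i+1}^{(j)}$ for $i=1,2,3$ and $\sigma(c_4^{(j)}) := c_1^{(j+1)}$.  Applying the $i = 4$ case of the transversality established above yields $A \cap (A + \tau(h)) = \emptyset$ for every order-$4$ element $h$, and combined with the order-$2$ case, a cardinality count then upgrades this to the full transversality $A \oplus \tau(\Z_4^2) = S_{\Z_4^2}$.

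The final step, identifying $A = \pi^{-1}(\{y_0\})$ for some $y_0 \in \{-1,1\}^2$, will be the hardest.  Since $A \subset B$, it suffices to show that $\pi$ is constant on $A$; by cardinality and the inclusion $A \subset \pi^{-1}(\{y_0\})$, equality will follow.  I will proceed by contradiction, supposing there exist $\alpha_0, \alpha_1 \in A$ with $\pi(\alpha_0) = y_0 \neq y_1 = \pi(\alpha_1)$.  From the right-invariance of $A$ under $\mathrm{Stab}(\{-1,1\}^2)$, the set $A^{-1}A$ is a union of $\mathrm{Stab}(\{-1,1\}^2)$-double cosets, so one may freely replace $\alpha_0, \alpha_1$ by other representatives of their respective cosets (a $(12!)^2$-parameter family), and moreover one may choose different cosets within the fibers $A \cap \pi^{-1}(\{y_j\})$ (which are sufficiently large, by pigeonhole, among the $15 \cdot 14 \cdot 13 = 2730$ relevant injections $\{-1,1\}^2 \to \Z_4^2$).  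Using this two-layered flexibility, one can realize $\alpha_0^{-1}\alpha_1$ with cycle type in $\{(16), (8,8), (4,4,4,4), (2,2,2,2,2,2,2,2)\}$, i.e., as a nontrivial power of some $16$-cycle $\sigma$.  This will contradict the transversality $A \cap (A + \sigma^i) = \emptyset$ established in the first step.

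The main obstacle is the combinatorial construction in the final step.  The relation $\alpha_0^{-1}\alpha_1(y_1) = y_0$ is forced by the choice of $\alpha_0, \alpha_1$, but crucially $y_0 \neq y_1$ means it is \emph{not} a fixed point and hence is compatible with the desired cycle types.  The only genuine obstruction is that incidental coincidences $\alpha_0(x) = \alpha_1(x)$ at some $x \in \{-1,1\}^2 \setminus \{y_0, y_1\}$ would introduce forced fixed points; avoiding these coincidences via judicious coset selection, and then routing the remaining free values of $\alpha_0^{-1}\alpha_1$ outside $\{-1,1\}^2$ so as to realize the target cycle structure, is the technical heart of the argument.
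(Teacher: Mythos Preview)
Your $(i)\Rightarrow(ii)$ direction and the opening moves of $(ii)\Rightarrow(i)$ (right-invariance of $A$ under $H:=\mathrm{Stab}(\{-1,1\}^2)$, the inclusion $A\subset B$, transversality of $A$ to each $\langle\sigma\rangle$) match the paper; your second step is essentially correct (the interleaving of the four $4$-cycles needs to be rotated so that $\sigma^4$ advances \emph{within} each $4$-cycle rather than between them) but is never used. The gap is in the final step. You aim to make $\phi_0^{-1}\alpha_0^{-1}\alpha_1\phi_1$ a nontrivial power of a $16$-cycle, and you flag coincidences $\alpha_0(x)=\alpha_1(x)$ (fixed points) as the obstacle. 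But there are further obstacles you have not identified: the double coset $H\gamma H$ of $\gamma=\alpha_0^{-1}\alpha_1$ is determined by the partial bijection $p_\gamma:=\gamma|_{\{-1,1\}^2\cap\gamma^{-1}(\{-1,1\}^2)}$, and every cycle of $p_\gamma$ persists as a cycle of \emph{every} representative $\phi_0^{-1}\gamma\phi_1$. Thus if $p_\gamma$ contains a $3$-cycle (which happens when, e.g., $\alpha_0(c)=\alpha_1(y_0)$ and $\alpha_0(y_1)=\alpha_1(c)$ for a third vertex $c$), or if $p_\gamma$ consists of a $2$-cycle together with one further edge $e\to f$, then no representative can have uniform cycle-length in $\{2,4,8,16\}$. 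Ruling out all such configurations by coset selection requires a pigeonhole count over the fibers that you have not carried out, and ``sufficiently large, by pigeonhole'' is not justified as stated: only one fiber of $A$ is guaranteed large.

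The paper sidesteps this entire case analysis with a single clean reduction. It first arranges that $\alpha(\{-1,1\}^2)\cap\alpha'(\{-1,1\}^2)=\{(0,0)\}$, which forces $p_\gamma$ to be nothing more than the single edge $\pi(\alpha')\mapsto\pi(\alpha)$; then the map $\sigma_0\colon\alpha(y)\mapsto\alpha'(y)$ has no cycles at all, extends to a $16$-cycle $\tilde\sigma$, and one obtains $\alpha+\sigma=\alpha'+\phi$ (with $\sigma$ the conjugate of $\tilde\sigma$ by $\alpha$ and $\phi\in H$), contradicting \eqref{a-cycle}. For arbitrary $\alpha,\alpha'\in A$ this image-disjointness is achieved via a bridge: since $|\alpha(\{-1,1\}^2)\cup\alpha'(\{-1,1\}^2)|\le 7$, one can choose $\beta\in B$ with $\beta(\{-1,1\}^2)$ meeting this union only at $(0,0)$, and then the tiling equation \eqref{a-tau} supplies $\alpha''\in A$ in the coset $\beta+\tau((2\Z_4)^2)$ with the same image $\alpha''(\{-1,1\}^2)=\beta(\{-1,1\}^2)$. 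Applying the disjoint-image argument to the pairs $(\alpha,\alpha'')$ and $(\alpha',\alpha'')$ yields $\pi(\alpha)=\pi(\alpha'')=\pi(\alpha')$. This bridging idea is what your plan is missing.
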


\begin{proof}  Suppose that (i) holds, thus $A = \pi^{-1}(\{y\})$ for some $Y \in \{-1,1\}^2$.  From \eqref{pi-regular} we then have
$$ A + \tau(h) = \pi^{-1}(\{y+h\})
$$
for every $h \in (2\Z_4)^2$, and hence
$$ A \oplus \tau((2\Z_4)^2) = B;$$
that is to say, \eqref{a-tau} holds.  Similarly, from \eqref{pi-translate} we have
$$ A + \phi = \pi^{-1}(\{\phi^{-1}(y)\}) = \pi^{-1}(\{y\})$$
for every $\phi \in \mathrm{Stab}(\{-1,1\}^2)$, and
$$ A + k \sigma = \pi^{-1}(\{\sigma^{-k}(y)\})$$
for any $k=1,\dots,15$ and every cycle $\sigma \in S_{\Z_4^2}$; since the orbit $y, \sigma^{-1}(y),\dots,\sigma^{-15}(y)$ traverses $\Z_4^2$, we conclude that
$$ A \oplus  \{ \phi, \sigma, 2\sigma, \dots, 15 \sigma\} = S_{\Z_4^2},$$
giving \eqref{a-cycle}.  Thus (i) implies (ii).  

Now suppose conversely that (ii) holds.  Then from \eqref{a-tau} we have $A \subset B$, and moreover for each $\beta \in B$ there exists a unique element of the coset $\beta + \tau((2\Z_4)^2)$  that lies in $A$.

If $\phi$ is an arbitrary element of $\mathrm{Stab}(\{-1,1\}^2)$ and $\sigma \in S_{\Z_4^2}$ is an arbitrary cycle, we see from two applications of \eqref{a-cycle} that
$$ A \oplus \{ \phi, \sigma,2\sigma,\dots,15 \sigma\} = A \oplus \{ 0, \sigma,2\sigma,\dots,15 \sigma\}$$
which on cancelling the terms involving $\sigma$ gives
$$ A \oplus \{\phi\} = A \oplus \{0\}.$$
That is to say, the set $A$ is invariant with respect to the right action of the group $\mathrm{Stab}(\{-1,1\}^2)$.

If $\alpha \in A$, then $\alpha \in B$, and hence $\alpha(\{-1,1\}^2)$ must contain the origin $(0,0)$.  Let $\alpha, \alpha' \in A$ be such that the images $\alpha(\{-1,1\}^2)$, $\alpha'(\{-1,1\}^2)$ intersect only at the origin.  We claim that this implies that $\pi(\alpha)=\pi(\alpha')$.  Indeed, suppose for contradiction that $\pi(\alpha) \neq \pi(\alpha')$.  Then the map $\sigma_0 \colon \alpha(\{-1,1\}^2) \to \alpha'(\{-1,1\}^2)$ defined by
$$ \sigma_0(\alpha(y)) \coloneqq \alpha'(y)$$
contains no fixed points (the only possible fixed point would be at the origin, but the assumption $\pi(\alpha) \neq \pi(\alpha')$ prohibits this).  Since the domain $\alpha(\{-1,1\}^2)$ and range $\alpha'(\{-1,1\}^2)$ of this map only intersect at one point, $\sigma_0$ also contains no cycles, and thus one can complete $\sigma_0$ to a cycle $\tilde \sigma \colon \Z_4^2 \to \Z_4^2$ of $\Z_4^2$.  By construction, the permutations $\tilde \sigma + \alpha$ and $\alpha'$ agree on $\{-1,1\}^2$, thus
$$ \tilde \sigma + \alpha = \alpha' + \phi$$
for some $\phi \in \mathrm{Stab}(\{-1,1\}^2)$.  Defining 
$$ \sigma \coloneqq (-\alpha) + \tilde \sigma + \alpha$$
to be the conjugate of $\tilde \sigma$ by $\alpha$, we see that $\sigma$ is a cycle with
$$ \alpha + \sigma = \alpha' + \phi,$$
but this contradicts the tiling equation \eqref{a-cycle}.  Thus $\pi(\alpha)=\pi(\alpha')$ as claimed.

Now suppose let $\alpha,\alpha'$ be arbitrary elements of $A$, dropping the requirement that $\alpha(\{-1,1\}^2)$, $\alpha'(\{-1,1\}^2)$ intersect only at the origin.  The cardinality of $\alpha(\{-1,1\}^2) \cup \alpha'(\{-1,1\}^2)$ is at most seven; since $\Z_4^2$ has order $16$, we can then certainly find a four-element subset $X$ of $\Z_4^2$ that intersects $\alpha(\{-1,1\}^2) \cup \alpha'(\{-1,1\}^2)$ only at the origin.  We can then find $\beta \in B$ such that $\beta(\{-1,1\}^2)$ only intersects $\alpha(\{-1,1\}^2) \cup \alpha'(\{-1,1\}^2)$ at the origin.  Since the coset $\beta + \tau((2\Z_4)^2)$ intersects $A$, we conclude that there exists $\alpha'' \in A$ in this coset such that $\alpha''(\{-1,1\}^2) = \beta(\{-1,1\}^2)$ only intersects $\alpha(\{-1,1\}^2) \cup \alpha'(\{-1,1\}^2)$ at the origin.  By the previous discussion, we have $\pi(\alpha) = \pi(\alpha'')$ and $\pi(\alpha') = \pi(\alpha'')$, hence $\pi(\alpha) = \pi(\alpha')$.  We conclude that $\pi$ is constant on $A$, thus there exists $y \in \{-1,1\}^2$ such that 
$$ A \subset \{ \alpha \in S_{\Z_4^2}: \pi(\alpha) = y \}.$$
Observe that the right-hand side has cardinality $15!$, while from \eqref{a-cycle} $A$ must have cardinality exactly $16!/16 = 15!$.  Thus we must have equality here, giving (i) as claimed.
\end{proof}

We lift this lemma from $S_{\Z_4^2}$ to the slightly larger group $S_{\Z_4^2} \times \Z_4^2$, to make the encoding of $\{-1,1\}^2$ more visible:

\begin{corollary}[Encoding $\{-1,1\}^2$ as a system of tiling equations in $S_{\Z_4^2} \times \Z_4^2$]\label{tiling-system-2}  Let $A$ be a subset of $S_{\Z_4^2} \times \Z_4^2$.  Then the following are equivalent:
\begin{itemize}
\item[(i)] $A$ is of the form
\begin{equation}\label{aok-2}
A = \pi^{-1}(\{y\}) \times \{y\} = \{ (\alpha,y): \alpha \in S_{\Z_4^2}, \pi(\alpha) = y \} 
\end{equation}
for some $y \in \{-1,1\}^2$.
\item[(ii)]  $A$ obeys the tiling equation
\begin{equation}\label{a-tau-2}
\Tile( \{ (\tau(h), h): h \in (2\Z_4)^2 \} ; \{ (\alpha,\pi(\alpha)): \alpha \in B \} )
\end{equation}
as well as the tiling equations
\begin{equation}\label{a-cycle-2}
\Tile( \{ \phi, \sigma, 2\sigma, \dots, 15 \sigma\} \times \Z_4^2  ; S_{\Z_4^2} \times \Z_4^2 )
\end{equation}
for every cycle $\sigma \in S_{\Z_4^2}$ and every $\phi \in \mathrm{Stab}(\{-1,1\}^2)$.
\end{itemize}
\end{corollary}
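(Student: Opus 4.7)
The plan is to bootstrap from \lemref{tiling-system}, by analyzing how the extra $\Z_4^2$ factor appearing in both \eqref{a-cycle-2} and \eqref{a-tau-2} forces $A$ to be the graph of a function on its first-coordinate projection $A' \subset S_{\Z_4^2}$. The direction (i) $\Rightarrow$ (ii) will be routine: with $A = \pi^{-1}(\{y\}) \times \{y\}$, the first-coordinate slice of each equation reduces to the corresponding equation \eqref{a-tau} or \eqref{a-cycle} from \lemref{tiling-system}, while on the second coordinate $\{y\} \oplus \Z_4^2 = \Z_4^2$ handles \eqref{a-cycle-2} trivially, and \eqref{pi-regular} yields $\pi(\alpha + \tau(h)) = y + h$, matching the second coordinate in \eqref{a-tau-2}.

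For (ii) $\Rightarrow$ (i), I would first exploit the Cartesian-product shape $S \times \Z_4^2$ of the tile in \eqref{a-cycle-2}. Were $(\alpha, y)$ and $(\alpha, y')$ two distinct elements of $A$ with $y \neq y'$, then for any $s$ in the first-coordinate factor $S$ the sum $(\alpha + s, y)$ would also admit the alternative decomposition $(\alpha, y') + (s, y - y')$, contradicting the disjointness required by the tiling equation. Hence $A$ must be a graph $\{(\alpha, g(\alpha)) : \alpha \in A'\}$ over its first-coordinate projection $A' \subset S_{\Z_4^2}$, for some function $g \colon A' \to \Z_4^2$. Projecting \eqref{a-cycle-2} to the first coordinate then yields \eqref{a-cycle} for $A'$ for every cycle $\sigma$ and every $\phi \in \mathrm{Stab}(\{-1,1\}^2)$.

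Next I would expand \eqref{a-tau-2} in the graph form, obtaining
\[ A \oplus \{(\tau(h), h) : h \in (2\Z_4)^2\} = \biguplus_{\alpha \in A',\, h \in (2\Z_4)^2} \{(\alpha + \tau(h), g(\alpha) + h)\},\]
and compare this to the target $\{(\beta, \pi(\beta)) : \beta \in B\}$. The identity $g(\alpha) + h = \pi(\alpha + \tau(h)) = \pi(\alpha) + h$ (from \eqref{pi-regular}) then forces $g = \pi$ on $A'$, and the first-coordinate projection delivers \eqref{a-tau} for $A'$. Applying \lemref{tiling-system} to $A'$ identifies $A' = \pi^{-1}(\{y\})$ for some $y \in \{-1,1\}^2$, and since $g = \pi$ is constantly equal to $y$ on this set, we conclude $A = \pi^{-1}(\{y\}) \times \{y\}$. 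The main care needed, and the only obstacle I foresee, is tracking disjoint unions carefully throughout; no new algebraic idea beyond those already used in \lemref{tiling-system} should be required.
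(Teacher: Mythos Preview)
Your argument is correct and follows essentially the same strategy as the paper: reduce to \lemref{tiling-system} by showing that $A$ is the graph of $\pi$ over some $A' \subset S_{\Z_4^2}$ which then satisfies \eqref{a-tau} and \eqref{a-cycle}. The only difference is in how the graph property is obtained: you use the disjointness in \eqref{a-cycle-2} to show $A$ is the graph of \emph{some} function $g$, and then use \eqref{a-tau-2} to identify $g=\pi$; the paper instead observes directly that, since the tile in \eqref{a-tau-2} contains the identity $(\tau(0),0)=(0,0)$, one has $A \subset \{(\alpha,\pi(\alpha)) : \alpha \in B\}$, so $A$ is automatically the graph of $\pi$ over its first-coordinate projection --- getting both facts in one stroke. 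After that, both proofs proceed identically.
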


\begin{proof}  If (i) holds, then from \eqref{pi-regular} we have
$$ A + (\tau(h),h) = \pi^{-1}(\{y+h\}) \times \{y+h\}$$
for all $h \in (2\Z_4)^2$, which gives \eqref{a-tau-2}, while from \eqref{pi-translate} one has
$$ A \otimes \{\phi\} \times \Z_4^2 = \pi^{-1}(\{y\}) \times \Z_4^2$$
for all $\phi \in \mathrm{Stab}(\{-1,1\}^2)$ and
$$ A \otimes \{k\sigma\} \times \Z_4^2 = \pi^{-1}(\{\sigma^{-k}(y)\}) \times \Z_4^2$$
for any cycle $\sigma \in S_{\Z_4^2}$ and $k=1,\dots,15$, which gives \eqref{a-cycle-2} much as in the proof of the previous lemma.  Thus (i) implies (ii).

Now suppose conversely that (ii) holds.  From \eqref{a-tau-2}  we see that $A$ is contained in the set on the right-hand side of \eqref{a-tau-2}; in particular $A$ is a graph
$$ A = \{ (\alpha,\pi(\alpha)): \alpha \in A' \}$$
for some $A' \subset S_{\Z_4^2}$.  Since $A$ satisfies the tiling equations \eqref{a-tau-2}, \eqref{a-cycle-2}, $A'$ satisfies the tiling equations 
\begin{equation*}
\{ (\alpha + \tau(h), \pi(\alpha) + h): \alpha \in A', h \in (2\Z_4)^2\} = \{ (\alpha,\pi(\alpha)): \alpha \in B \}
\end{equation*}
and 
\begin{equation*}
(A'  \oplus \{ \phi, \sigma, 2\sigma, \dots, 15 \sigma\} ) \times  \Z_4^2  = S_{\Z_4^2} \times \Z_4^2.
\end{equation*}
We conclude  that $A'$ must obey the tiling equations \eqref{a-tau}, \eqref{a-cycle}.  Applying Lemma \ref{tiling-system}, we see that $A'$ is of the form \eqref{aok} for some $y \in \{-1,1\}^2$, and we obtain (i) as required.
\end{proof}

We enumerate the system \eqref{a-tau-2}, \eqref{a-cycle-2} as the system of tiling equations
\begin{equation}\label{tfe}
\Tile( F_\ell; E_\ell ); \quad \ell=1,\dots, L
\end{equation}
for some explicit collection $F_1,\dots,F_L, E_1,\dots,E_L$ of subsets of $S_{\Z_4^2} \times \Z_4^2$ (indeed one has $L = 1 + 15! \times 12!$).  Thus the sets \eqref{aok-2} are precisely the solutions to the tiling system \eqref{tfe}:
\begin{equation}\label{tfe-encode}
 \bigcap_{\ell=1}^L \Tile( F_\ell; E_\ell )_\Universe = \{ \pi^{-1}(\{y\}) \times \{y\}: y \in \{-1,1\}^2 \}.
 \end{equation}
Thus we have successfully encoded the Hamming cube $\{-1,1\}^2$ as a system of tiling equations, in a manner that allows the reflection map $\rho \in S_{\Z_4^2}$ to interact with this encoding.

We now use the above corollary to encode the solvability question appearing in Theorem \ref{main-linear}.

\begin{proposition}[Encoding linear equations]\label{linear-encoding}  Let $D \geq D_0 \geq 1$ and $M_1, M_2 \geq 1$ be natural numbers, and let  $a_{j,d}^{(m)} \in \Z$ be integer coefficients for $j=1,2$, $d=1,\dots,D$, $m=1,\dots,M_j$, and shifts $h_d \in \Z^2$ for $d=1,\dots,D_0$.  Let $N$ be a multiple of $4$ that is sufficiently large depending on all previous data.  We  define the coordinate projections
\begin{align*}
\pi'_1,\dots,\pi'_D &\colon (\Z_N^2)^D \to \Z_N^2 \\
\pi''_1,\dots,\pi''_{D} &\colon (S_{\Z_4^2})^{D} \to S_{\Z_4^2} \\
\pi'''_1, \pi'''_2 &\colon \Z_N^2 \to \Z_N
\end{align*}
in the obvious fashion, while also letting $\Pi \colon \Z_N^2 \to \Z_4^2$ be the reduction mod $4$ map, which is a homomorphism with kernel $(4\Z_N)^2$; see Figure \ref{fig:maps}.  
Then the following statements are equivalent:
\begin{itemize}
\item[(i)]  There exist functions $f_{j,d} \colon \Z^2 \to \{-1,1\} \subset \Z$ for $j=1,2$ and $d=1,\dots,D$ that solve
the system of linear functional equations \eqref{func-eq-2} for all $n \in \Z^2$, $j=1,2$, and $m = 1,\dots,M_j$, as well as the system of linear functional equations \eqref{func-eq-3} for all $n \in \Z^2$ and $d=1,\dots,D_0$.
\item[(ii)]  There exists a set $A \subset \Z^2 \times \Z_2 \times (\Z_N^2)^D \times (S_{\Z_4^2})^D$ that simultaneously solves the following systems of nonabelian tiling equations:

\begin{itemize}
    \item[1. ] The tiling equations \begin{equation}\label{bigtile-1}
 \Tile( \{((0,0),0)\} \times H_j^{(m)} \times C_\sigma  ; \Z^2 \times \Z_2 \times H_j^{(m)} \times S_{\Z_4^2}^{D} )
\end{equation}
for all $j=1,2$ and $m=1,\dots,M_j$, and cycles $\sigma \in S_{\Z_4^2}$, where $H_j^{(m)} \leq (\Z_N^2)^D$ is the subgroup
\begin{equation}\label{hjm}
H_{j}^{(m)} \coloneqq \left\{ (y_{1,d},y_{2,d})_{d=1}^D \in (\Z_N^2)^D: \sum_{d=1}^D a_{j,d}^{(m)} y_{j,d} = 0 \right\}
\end{equation}
and $C_\sigma \subset S_{\Z_4^2}^{D}$ is the set
$$ C_\sigma \coloneqq (\pi''_1)^{-1}(\{0, \sigma, \dots, 15 \sigma \}) = \{0, \sigma, \dots, 15 \sigma \} \times S_{\Z_4^2}^{D-1}.$$
\item[2. ] The  tiling equations
\begin{equation}\label{bigtile-2}
\begin{split}
&\Tile( \{(0,0)\} \times \Z_2 \times (\pi'''_j \circ \pi'_d)^{-1}(\{0\}) \times C_\sigma  ; \\
&\quad\quad \Z^2 \times \Z_2 \times (\pi'''_j \circ \pi'_d)^{-1}(\{-1,1\}) \times S_{\Z_4^2}^{D} )
\end{split}
\end{equation}
for all $d=1,\dots,D$, $j=1,2$, and cycles $\sigma \in S_{\Z_4^2}$. 
\item[3. ] The  tiling equations
\begin{equation}\label{bigtile-3}
\Tile( (T_d \uplus T'_d)  ; \Z^2 \times \Z_2 \times (\Z_N^2)^D \times (\pi''_d)^{-1}(B) )
\end{equation}
for all $d=1,\dots,D_0$, where 
\begin{align*}
T_d &\coloneqq
\{ ((0,0), 0)\} \times (\Z_N^2)^D \times (\pi''_d)^{-1}(\tau(K))\\
T'_d &\coloneqq \{(-h_d,0)\} \times (\Z_N^2)^D \times (\pi''_d)^{-1}(\rho + \tau(K)).
\end{align*}
\item[4. ] The  tiling equations
\begin{equation}\label{bigtile-4}
\Tile( \{((0,0),0)\} \times F_{\ell,d}  ; \Z^2 \times \Z_2 \times  E_{\ell,d} )
\end{equation}
for all $d=1,\dots,D$ and $\ell=1,\dots,L$, where
\begin{align*}
F_{\ell,d} &\coloneqq \{ (y,\zeta) \in (\Z_N^2)^D \times S_{\Z_4^2}^{D}: (\pi''_d(\zeta), \Pi(\pi'_d(y))) \in F_\ell \} \\
E_{\ell,d} &\coloneqq \{ (y,\zeta) \in (\Z_N^2)^D \times S_{\Z_4^2}^{D}: (\pi''_d(\zeta), \Pi(\pi'_d(y))) \in E_\ell \}
\end{align*}
and $F_\ell, E_\ell$ are the sets from \eqref{tfe}.
\end{itemize}
\end{itemize}
\end{proposition}

\begin{figure}
    \centering
    \begin{tikzcd}
    & H_j^{(m)} \arrow[d,dashed,hookrightarrow] & & C_\sigma \arrow[d,dashed,hookrightarrow] \\
    \{-1,1\}^{2D} \arrow[r,dashed,hookrightarrow] \arrow[d,"\pi'_d",dashed,twoheadrightarrow] & (\Z_N^2)^D \arrow[d,"\pi'_d",twoheadrightarrow] & & (S_{\Z_4^2})^D \arrow[d,"\pi''_d",twoheadrightarrow] \\
    \{-1,1\}^2 \arrow[d,dashed,twoheadrightarrow,"\pi'''_j"] \arrow[r,hookrightarrow,dashed] & \Z_N^2 \arrow[d,twoheadrightarrow,"\pi'''_j"] \arrow[r,"\Pi",twoheadrightarrow] & \Z_4^2 & S_{\Z_4^2} \arrow[l, "\pi"', twoheadrightarrow,dashed]\\
    \{-1,1\} \arrow[r,hookrightarrow,dashed] & \Z_N & \tau(K) \arrow[ur, hookrightarrow, dashed] & \rho + \tau(K) \arrow[u, hookrightarrow,dashed] & B \arrow[ul, hookrightarrow, dashed]
\end{tikzcd}
    \caption{Some of the sets and maps mentioned in Proposition \ref{linear-encoding}. (The notation is the same as in \figref{fig:group}.)}
    \label{fig:maps}
\end{figure}
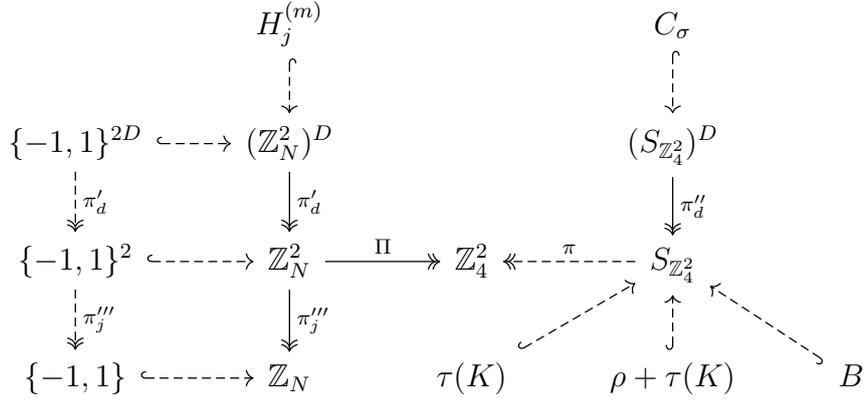

\begin{proof} Suppose that (i) holds.  The sets 
$$ 
\pi^{-1}(\{y\}) = \{ \alpha \in S_{\Z_4^2}: \pi(\alpha) = y \} $$
 have the same cardinality $15!$ for all $y \in \{-1,1\}^2$, so we may arbitrarily enumerate
$$ 
 \pi^{-1}(\{y\}) =\{ \alpha_{y,1},\dots,\alpha_{y,15!} \}$$
for each $y \in \{-1,1\}^2$ and some distinct permutations $\alpha_{y,k}$ for $y \in \{-1,1\}^2$, $k=1,\dots,15!$.  We then let $A$ denote the set of all elements of $\Z^2 \times \Z_2 \times (\{-1,1\}^2)^D\times (S_{\Z_4^2})^D$ of the form
$$ (n, t, (y_{n,t,d})_{d=1}^D, (\alpha_{y_{n,t,d},k})_{d=1}^{D} )$$
for $(n,t) \in \Z^2 \times \Z_2$ and $k=1,\dots,15!$, where
\begin{equation}\label{def-y-ntd}
    y_{n,t,d} \coloneqq ((-1)^t f_{1,d}(n), (-1)^t f_{2,d}(n)) \in \{-1,1\}^2.
\end{equation} 

We now verify the tiling equations \eqref{bigtile-1}, \eqref{bigtile-2}, \eqref{bigtile-3}, \eqref{bigtile-4}.  For any $(n,t) \in \Z^2 \times \Z_2$ and any cycle $\sigma \in S_{\Z_4^2}$, we see from Lemma \ref{tiling-system} that
$$ \{ (\alpha_{y_{n,t,d},k})_{d=1}^{D}: k=0,\dots,15!\} \oplus C_\sigma = S_{\Z_4^2}^{D}$$
and thus for any $d=1,\dots,D$, $j=1,2$, $\sigma$ one has
$$ A \oplus \{((0,0),0)\} \times H_j^{(m)} \times C_\sigma
= \biguplus_{(n,t) \in \Z^2 \times \Z_2}
\{(n,t)\} \times ((y_{n,t,d})_{d=1}^D + H_j^{(m)}) \times S_{\Z_4^2}^{D}.$$
From \eqref{hjm}, \eqref{def-y-ntd}, \eqref{func-eq-2} we have
$$ (y_{n,t,d})_{d=1}^D + H_j^{(m)} = H_j^{(m)}$$
and the equation \eqref{bigtile-1} then follows.  In a similar vein, the set
$$ A \oplus \{(0,0)\} \times \Z_2 \times (\pi'''_j \circ \pi'_d)^{-1}(\{0\}) \times C_\sigma $$
for a given $d=1,\dots,D$, $j=1,2$, $\sigma$ is equal to
$$ \biguplus_{n \in \Z^2}
\{n\} \times \Z_2 \times 
\left( \biguplus_{t \in \Z_2} (\pi'''_j \circ \pi'_d)^{-1}(\pi'''_j(y_{n,t,d}))\right) \times S_{\Z_4^2}^{D}.$$
From \eqref{def-y-ntd} we have
$$\{ \pi'''_j(y_{n,0,d})\} \uplus \{ \pi'''_j(y_{n,1,d}) \} = \{-1,1\}$$
and the equation \eqref{bigtile-2} then follows.  Turning now to \eqref{bigtile-3}, we see from the definitions of $A, T_d, T'_d$ that the set
$$ A \oplus (T_d \uplus T'_d)$$
is equal to
$$ \biguplus_{(n,t) \in \Z^2 \times \Z_2} \{(n,t)\} \times (\Z_N^2)^D \times
(\pi''_d)^{-1}\left( A_{n,t,d} \oplus \tau(K) \uplus A_{n+h_d,t,d} \oplus (\rho + \tau(K)) \right)$$
where $A_{n,t,d} \subset S_{\Z_4^2}$ is the set
$$ A_{n,t,d} \coloneqq \{ \alpha_{y_{n,t,d},k}: k=1,\dots,15!\} = \pi^{-1}(\{ y_{n,t,d}\}).$$
From \eqref{pi-regular} we have that
$$A_{n,t,d} \oplus \tau(K) = \pi^{-1}(y_{n,t,d} + K)$$
and similarly from \eqref{pi-translate}, \eqref{pi-regular}  (and the involutive nature of $\rho$) that
$$A_{n+h_d,t,d} \oplus (\rho + \tau(K)) = \pi^{-1}( \rho(y_{n+h_d,t,d}) + K ).$$
On the other hand, from the equation \eqref{func-eq-3} we have
$$ (y_{n,t,d} + K) \uplus (\rho(y_{n+h_d,t,d}) + K) = \{-1,1\}^2$$
and hence
$$ A \oplus (T_d \uplus T'_d) = \biguplus_{(n,t) \in \Z^2 \times \Z_2} \{(n,t)\} \times (\Z_N^2)^D \times
(\pi''_d)^{-1}\left( \pi^{-1}(\{-1,1\}^2) \right).$$
Since $\pi^{-1}(\{-1,1\}^2) = B$, this gives \eqref{bigtile-3}.

Finally we verify \eqref{bigtile-4}.  Suppose that
$(n,t,y,\zeta) \in \Z^2 \times \Z_2 \times (\Z_N^2)^D \times S_{\Z_4^2}^{D}$ is an element of
$$ A \oplus \{((0,0),0)\} \times F_{\ell,d}.$$
By the definition of $A$ and $F_{\ell,d}$, we thus have
$$ (n,t,y,\zeta) = (n,t,(y_{n,t,d})_{d=1}^D + y', (\alpha_{y_{n,t,d},k})_{d=1}^D + \zeta')$$
for some $k=1,\dots,15!$, $y' \in (\Z^2_N)^D$, and $\zeta' \in S_{\Z^4}^D$ obeying $(\pi''_d(\zeta'), \Pi(\pi'_d(y'))) \in F_\ell$.  In particular, we have
$$ \Pi(\pi'_d(y)) = \Pi(y_{n,t,d}) + \Pi(\pi'_d(y'))$$
and
$$ \pi''_d(\zeta) = \alpha_{y_{n,t,d},k} + \pi''_d(\zeta')$$
and hence by definition of $A_{n,t,d}, F_\ell$ and
\eqref{tfe-encode} (or Corollary \ref{tiling-system-2})
\begin{equation}\label{pidd}
(\pi''_d(\zeta), \Pi(\pi'_d(y)))
 \in A_{n,t,d} \times \{\Pi(y_{n,t,d})\} \oplus F_\ell = E_\ell
 \end{equation}
(note from \eqref{tfe-encode} that all the sums in the right-hand side of \eqref{pidd} are distinct).  Conversely, if $(n,t,y,\zeta)$ obeys the constraint \eqref{pidd}, we can reverse the above arguments and represent $(n,t,y,\zeta)$ uniquely as an element of $A \oplus \{((0,0),0)\} \times F_{\ell,d}$. We conclude that
$$  \left\{(n,t,y,\zeta) \in \Z^2 \times \Z_2 \times (\Z_N^2)^D \times S_{\Z_4^2}^{D}: (\pi''_d(\zeta), \Pi(\pi'_d(y)))
 \in E_\ell \right\},
$$ 
and \eqref{bigtile-4} follows.  This concludes the derivation of (ii) from (i).

Now suppose conversely that (ii) holds.  For any $(n,t) \in \Z^2 \times \Z_2$, let $A_{n,t} \subset (\Z_N^2)^D \times S_{\Z_4^2}^{D}$ be the fiber
$$ A_{n,t} \coloneqq \{ (y,\zeta) \in (\Z_N^2)^D \times S_{\Z_4^2}^{D}: (n,t,y,\zeta) \in A \}.$$
From the tiling equation \eqref{bigtile-4} we have for every $d=1,\dots,D$ and $\ell=1,\dots,L$ that
$$ A \oplus \{ ((0,0),0)\} \times F_{\ell,d} = \Z^2 \times \Z_2 \times E_{\ell,d}$$
and hence (on restricting to $\{(n,t)\} \times (\Z^2_N)^D \times (S_{\Z^2_4})^D$) we have
$$ A_{n,t} \oplus F_{\ell,d} = E_{\ell,d}$$
for every $(n,t) \in \Z^2 \times \Z_2$.
By the definition of $F_{\ell,d}, E_{\ell,d}$, this implies that the map $(y,\zeta) \mapsto (\pi''_d(\zeta), \Pi(\pi'_d(y)))$ is injective on $A_{n,t}$, and that the image
$$ A'_{n,t,d} \coloneqq \{ (\pi''_d(\zeta), \Pi(\pi'_d(y))): (y,z) \in A_{n,t} \} \subset S_{\Z_4^2} \times \Z_4^2$$
obeys the tiling equations 
$$ A'_{n,t,d} \oplus F_{\ell} = E_{\ell}$$
for all $\ell=1,\dots,L$.  Applying \eqref{tfe-encode} (or Corollary \ref{tiling-system-2}), we conclude that there exists $y_{n,t,d} \in \{-1,1\}^2$ such that
\begin{equation}\label{aptnd}
 A'_{n,t,d} = \{ \alpha \in S_{\Z_4^2}: \pi(\alpha) = y_{n,t,d}\} \times \{y_{n,t,d} \}.
 \end{equation}
 In particular $A'_{n,t,d}$ has cardinality $15!$, hence $A_{n,t}$ has cardinality $15!$ as well.  From \eqref{aptnd} and the definition of $A'_{n,t,d}$, we see that for any $(y,\zeta) \in A_{n,t}$, we have
\begin{equation}\label{pio}
 \Pi(\pi'_d(y)) =  \pi(\pi''_d(\zeta)) = y_{n,t,d}
 \end{equation}
for all $d=1,\dots,D$.

Next, from \eqref{bigtile-2} we have in particular that
$$
A \subset \Z^2 \times \Z_2 \times (\pi'''_j \circ \pi'_d)^{-1}(\{-1,1\}) \times S_{\Z_4^2}^{D}$$
and hence
$$ \pi'''_j \circ \pi'_d( y ) \in \{-1,1\}$$
whenever $(n,t) \in \Z^2 \times \Z_2$, $(y,\zeta) \in A_{n,t}$, $j=1,2$, and $d=1,\dots,D$.  In particular, $\pi'_d(y) \in \{-1,1\}^2$, which when combined with
\eqref{pio} gives
$$ \pi'_d(y) = y_{n,t,d}$$
(where by abuse of notation we view $\{-1,1\}^2$ as embedded in both $\Z_4^2$ and $\Z_N^2$).  Thus we have
\begin{equation}\label{y-eq}
y = (y_{n,t,d})_{d=1}^D
\end{equation}
whenever $(y,\zeta) \in A_{n,t}$.

From \eqref{bigtile-1} we have
$$
A \subset \Z^2 \times \Z_2 \times H_j^{(m)} \times S_{\Z_4^2}^{D}$$
which when combined with \eqref{y-eq} implies that
$$ (y_{n,t,d})_{d=1}^D \in H_j^{(m)}$$
for $j=1,2$ and $m=1,\dots,M_j$, and $(n,t) \in \Z^2 \times \Z_2$.  If we now introduce the boolean functions $f_{j,d} \colon \Z^2 \to \{-1,1\}$ by the formula
\begin{equation}\label{f1d}
(f_{1,d}(n), f_{2,d}(n)) \coloneqq y_{n,0,d}
\end{equation}
for $n \in \Z^2$ and $d=1,\dots,D$, we conclude that
$$ (f_{1,d}(n), f_{2,d}(n))_{d=1}^D \in H_j^{(m)}$$
or equivalently that
$$ \sum_{d=1}^D a_{j,d}^{(m)} f_{j,d}(n) = 0 \hbox{ mod } N$$
for all $n \in \Z^2$, $j=1,2$, and $m=1,\dots,M_j$.  For $N$ large enough, we may drop the reduction modulo $N$ as the left-hand side is bounded independently of $N$, thus
$$ \sum_{d=1}^D a_{j,d}^{(m)} f_{j,d}(n) = 0 $$
in the integers.  This gives \eqref{func-eq-2}.

Next, from \eqref{bigtile-3}  we have
\begin{align*}
&A_{n,t} \oplus (\Z_N^2)^D \times (\pi''_d)^{-1}(\tau(K))\\
\uplus & A_{n+h_d,t} \oplus (\Z_N^2)^D \times (\pi''_d)^{-1}(\rho + \tau(K))
= (\Z_N^2)^D \times (\pi''_d)^{-1}(B)
\end{align*}
for any $(n,t) \in \Z^2 \times \Z_2$ and $d=1,\dots,D_0$.  Applying the projection $\pi''_d$ followed by \eqref{aptnd}, we conclude that
\begin{align*}
&\{ \alpha \in S_{\Z_4^2}: \pi(\alpha) = y_{n,t,d}\} \oplus \tau(K) \\
\uplus&
\{ \alpha \in S_{\Z_4^2}: \pi(\alpha) = y_{n+h_d,t,d}\} \oplus (\rho + \tau(K)) = B.
\end{align*}
Applying \eqref{pi-translate}, \eqref{pi-regular}, \eqref{B-def}, this is equivalent to
$$ (y_{n,t,d} + K) \uplus (\rho(y_{n+h_d,t,d}) + K) = \{-1,1\}^2.$$
Specializing to $t=0$ and using \eqref{f1d}, we obtain
$$ \{f_{1,d}(n)\} \uplus \{ f_{2,d}(n+h_d)\} = \{-1,1\}$$
which is \eqref{func-eq-3}.  This establishes (i).
\end{proof}

By Theorem \ref{main-linear}, there exist choices of $D, D_0, M_1, M_2, \alpha_{j,d}^{(m)}, h_d$ such that the problem in Proposition \ref{linear-encoding}(i) is undecidable in ZFC.  As the proof of this proposition is valid in every universe $\Universe^*$ of ZFC, we conclude that for $N$ a sufficiently large (standard) multiple of $4$, the problem in Proposition \ref{linear-encoding}(ii) is undecidable in ZFC.  Thus, we can find an undecidable system of nonabelian tiling equations
$$ \Tile( \tilde F_\ell; \Z^2 \times \tilde E_\ell ); \quad \ell = 1,\dots,\tilde L$$
for some non-empty subsets $\tilde F_1,\dots,\tilde F_{\tilde L}$ of $\Z^2 \times \Z_2 \times (\Z_N^2)^D \times (S_{\Z_4^2})^D$ and subsets $\tilde E_1, \dots, \tilde E_{\tilde L}$ of $\Z_2 \times (\Z_N^2)^D \times (S_{\Z_4^2})^D$.  Applying Theorem \ref{combine} (and Remark \ref{nonab}), we obtain Theorem \ref{onetile} as desired.

\section{Open problems and remarks}\label{sec-problems}

\subsection{}
Recall that \conjref{ptc} is open in dimensions $d>2$ (see \subsecref{subsub-ptc}  for further discussion and known results). The following question then naturally arises.

\begin{problem}
Let $G$ be a non-trivial finitely generated abelian group. Are there  any finite set $F\subset \Z^2\times G$ and periodic set $E\subset \Z^2\times G$ such that  the tiling equation $\Tile(F,E)$ is aperiodic? 
\end{problem}
We hope to address this problem in a future work.

\subsection{}
\conjref{ptc} was originally formulated in \cite{LW} for $G=\R^d$.   It is an interesting question to determine the precise relationship  between the $\Z^d$ and $\R^d$
formulations of the conjecture.

\begin{problem}\label{problem_Rd}
Let $d\geq 1$. What can be said about  \conjref{ptc} for $G=\R^d$, given that the conjecture holds in $\Z^d$?
\end{problem}
In the one dimensional case, the two formulations are equivalent (see \cite{LW}). In the two dimensional case the precise relationship between the discrete and continuous formulations of the periodic tiling conjecture is not known. In \cite{ken,err}  Kenyon extended the result in \cite{gbn} and proved that the periodic tiling conjecture holds for topological discs in $\R^2$. 
In \cite{GT} we  proved that for any finite $F\subset \Z^2$ and periodic $E\subset \Z^2$, all the solutions to the equation $\Tile(F,E)$ are weakly periodic. This  implies a similar result for some special types of tile $F$ in  $\R^2$, by using the construction in \remref{extend_Rd}.  We hope to extend this class of tiles and consider the higher dimensional case of \probref{problem_Rd} in a future work.

\subsection{}
We suggest several possible improvements of our construction.
\begin{itemize}
    \item It might be possible to modify our argument to allow $E_0$ in \thmref{main} to equal $G_0$.  
\begin{problem}
Is there any finite abelian group $G_0$ for which there exist finite non-empty sets $F_1,F_2\subset \Z^2\times G_0$ such that the tiling equation $\Tile(F_1,F_2;\Z^2\times G_0)$ is undecidable?
\end{problem}

\item In \cite{goodman} a construction of two tiles $F_1,F_2$ in $\R^2$ is given in which the tiling equation is aperiodic if one is allowed to apply arbitrary isometries (not just translations) to the tiles $F_1,F_2$; each tile ends up lying in eight translation classes, so in our notation this is actually an aperiodic construction with $J = 2 \times 8 = 16$.  Similarly for the ``Ammann $A2$'' construction in \cite{ags} (with $J = 2 \times 4 = 8$). The aperiodic tiling of $\R^2$ (or the hexagonal lattice) construction in \cite{socolar-taylor} involves a class of twelve tiles that are all isometric to a single tile (twelve being the order of the symmetry group of the hexagon).

It may be possible to adapt the construction used to prove Theorem \ref{main} so that the tiles $F_1,F_2$ are isometric to each other.  On the other hand, we note a remarkable result of Gruslys, Leader, and Tan \cite{glt} that asserts that for \emph{any} non-empty finite subset $F$ of $\Z^d$, there exists a tiling of $\Z^n$ for some $n \geq d$ by isometric copies of $F$.

\begin{problem}
Does our construction provide an example of a finite abelian group  $G_0$, a subset $E_0\subset G_0$, and two finite sets $F_1,F_2 \subset \Z^2\times G_0$ which are isometric to each other, such that the tiling equation 
$$\Tile(F_1,F_2;\Z^2\times E_0)$$ 
is undecidable?
\end{problem}

\item The finite abelian group $G_0$ in \thmref{main} obtained from our construction is quite large.  
It would be interesting to optimize the size of $G_0$.
\begin{problem}
Find the smallest finite abelian group $G_0$ for which there exist finite non-empty sets $F_1,F_2\subset \Z^2\times G_0$, and  $E_0\subset G_0$ such that the tiling equation $\Tile(F_1,F_2;\Z^2\times E_0)$ is undecidable.
\end{problem}

\item It might be possible  to reduce the dimension $d$ in \thmref{main'} by  ``folding'' more efficiently the finite construction of $G_0$ in \thmref{main}, into a \emph{lower} dimensional infinite space.
\begin{problem}
Let $G_0=\prod_{i=1}^{d} \Z_{N_i}$. Suppose that there exist $F_1,F_2\subset \Z^2\times G_0$ and $E_0\subset G_0$  such that the tiling equation $\Tile(F_1,F_2;\Z^2\times E_0)$ is  undecidable. Does this imply the existence of $d'<2+d$ such that there are finite sets $F'_1, F'_2\subset \Z^{d'}$ and a periodic set $E\subset \Z^{d'}$ for which the tiling equation $\Tile(F'_1,F'_2;E)$ is undecidable?  
\end{problem}

\item In   \remref{algo} we discuss the algorithmic undecidable tiling problem which our argument establishes. In this tiling problem, the finite abelian group $G_0$ is one of the inputs.  It might be that a slight  modification of our construction would imply the existence algorithmic undecidable tiling problem with two tiles in $\Z^2\times G_0$, for a \textit{fixed} finite abelian group $G_0$.
\begin{problem}
Is there any finite abelian group $G_0$ such that the  decision problem of whether the tiling equation $\Tile(F_1,F_2;\Z^2\times E_0)$ is solvable for any given finite subsets $F_1,F_2\subset \Z^2\times G_0$ and $E_0\subset G_0$, is algorithmically undecidable?
\end{problem}
\end{itemize}

\appendix

\section{Undecidability and aperiodicity}\label{wang-app}

In this section we give a well-known argument of Wang  (see \cite{Ber, R}) that undecidability implies aperiodicity (which in particular implies that undecidable tiling equations admit tilings in the standard universe).  The argument is usually phrased in the language of algorithmic undecidability, but can be adapted without difficulty to the logical notion of undecidability discussed here.

\begin{theorem}[Undecidability implies aperiodicity]
Let $G$ be an explicit finitely generated abelian group, $J, M \geq 1$ be standard natural numbers, and for each $m=1,\dots,M$, let $F^{(m)}_1,\dots,F^{(m)}_J$ be finite subsets of $G$, and let $E^{(m)}$ be a periodic subset of $G$.  If the system $\Tile(F^{(m)}_1,\dots,F^{(m)}_J; E^{(m)})$ for $m=1,\dots,M$ is undecidable, then it is aperiodic.
\end{theorem}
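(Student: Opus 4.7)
The proof proceeds by contrapositive: I will show that if the system is not aperiodic, then it is decidable. Assume the system $\Tile(F_1^{(m)},\dots,F_J^{(m)}; E^{(m)})$, $m=1,\dots,M$ is not aperiodic in the sense of Definition \ref{undecide-system}(ii). Then in the standard universe $\Universe$ one of two alternatives holds: either the solution set is empty, or it contains a tuple of periodic sets.

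Suppose first there is a periodic solution $(A_1,\dots,A_J)$ in $\Universe$. Each $A_j$ can be written as $S_j \oplus \Lambda$ for some finite $S_j \subset G$ and some finite-index subgroup $\Lambda \leq G$; after refining $\Lambda$ we may further assume that $E^{(m)}$ is $\Lambda$-periodic for every $m$. The data $(S_1,\dots,S_J,\Lambda)$ is described by finitely many standard integers, hence defines counterparts $(A_j)_{\Universe^*}$ in every universe $\Universe^*$ of ZFC. Verifying that this tuple solves the system reduces to checking an identity of the form $\biguplus_{j} ((S_j \oplus F_j^{(m)}) \cap D) = E^{(m)} \cap D$ on a fundamental domain $D$ of $\Lambda$ suitably thickened by $L \coloneqq \max_{j,m} \diam(F_j^{(m)})$. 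This is a finite set equality on explicit finite sets, so ``there exists a solution'' is provable in ZFC by exhibiting this witness.

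Suppose instead that the solution set in $\Universe$ is empty. Here I invoke a compactness argument: $(2^G)^J$ is compact in the product topology, and the requirement that $\biguplus_j (A_j \oplus F_j^{(m)}) = E^{(m)}$ hold at a single point $n \in G$ is a clopen condition, depending only on the restrictions of the $A_j$ to the bounded neighborhood $n - F_j^{(m)}$. The solution set is the intersection of these clopen conditions over all pairs $(n,m)$, so if it is empty, compactness produces a standard $N$ such that no tuple $(A_1',\dots,A_J')$ of subsets of a sufficiently large finite box $B_N \subset G$ induces a valid tiling on the inner box $B_{N-L}$. This is a finite combinatorial fact that ZFC can verify by exhaustive enumeration of the finitely many candidates; moreover, the same obstruction precludes any solution in every universe $\Universe^*$, since any such solution would restrict to a valid partial configuration on $B_N$. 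Hence ``no solution exists'' is provable in ZFC.

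In either alternative the system is decidable, contrary to our hypothesis. I expect the main delicate point to be justifying the compactness invocation formally within ZFC; this reduces to König's lemma applied to the tree of finite partial tilings on nested boxes of the countable group $G$, which is easily formalizable. The rest of the argument just unpacks what is meant by a ``finite witness'' and observes that such witnesses transfer across all universes $\Universe^*$ of ZFC.
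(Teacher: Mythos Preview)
Your proposal is correct and follows essentially the same contrapositive argument as the paper: split into the two cases (periodic solution exists, or no solution exists in $\Universe$), and in each case produce a finite witness that transfers to every universe. The only cosmetic difference is that you phrase the no-solution case via topological compactness of $(2^G)^J$, whereas the paper invokes the compactness theorem in logic directly (with a footnote noting that K\"onig's lemma or Tychonoff work equally well); these are the same argument in different clothing.
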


\begin{proof}  We will establish the contrapositive: if the system $\Tile(F^{(m)}_1,\dots,F^{(m)}_J; E^{(m)})$ for $m=1,\dots,M$ fails to be aperiodic, then it must be decidable.  By definition of aperiodicity, one of the following two statements must hold:
\begin{itemize}
\item[(i)]  The standard solution set $\bigcap_{m=1}^M \Tile(F^{(m)}_1,\dots,F^{(m)}_J; E^{(m)})_\Universe$ is empty.
\item[(ii)]  The standard solution set $\bigcap_{m=1}^M \Tile(F^{(m)}_1,\dots,F^{(m)}_J; E^{(m)})_\Universe$ contains a periodic tuple $(A_1,\dots,A_J)$.
\end{itemize}
In case (ii), since periodic sets are definable, we have a solution to the system $\Tile(F^{(m)}_1,\dots,F^{(m)}_J; E^{(m)})$, $m=1,\dots,M$ in every universe $\Universe^*$ of ZFC, and hence by the G\"odel completeness theorem the solvability question is decidable (in the positive).  Now suppose that we are in case (i).  By the compactness theorem in logic\footnote{One can also proceed here using K\"onig's lemma, or via other compactness theorems such as Tychonoff's theorem.}, there must therefore exist a finite subset $S$ of $G$ such that the system $\Tile(F^{(m)}_1,\dots,F^{(m)}_J; E^{(m)})$, $m=1,\dots,M$ is not satisfiable in $S$, in the sense that there does not exist $A_1,\dots,A_J \subset G$ such that
$$ ((A_1 \oplus F^{(m)}_1) \cap S) \cup \dots \cup ((A_J \oplus F^{(m)}_J) \cap S) = E^{(m)} \cap S$$
for all $m=1,\dots,M$.  This latter assertion can be viewed as unsatisfiable boolean sentence involving the finite number of propositions $(n \in A_j)$ for $j=1,\dots,J$, $m=1,\dots,M$, and $n \in S - F^{(m)}$.  The unsatisfiability of this sentence can be proven in ZFC (simply by exhausting a truth table), and it implies the unsolvability of $\Tile(F^{(m)}_1,\dots,F^{(m)}_J; E^{(m)})$, $m=1,\dots,M$ in every universe of ZFC.  By the G\"odel completeness theorem, we thus see that the solvability of this system is decidable (in the negative).  The claim follows.
\end{proof}


\end{document}